\DeclarePairedDelimiter\bra{\langle}{\rvert}
\DeclarePairedDelimiter\ket{\lvert}{\rangle}
\DeclarePairedDelimiterX\braket[2]{\langle}{\rangle}{#1 \delimsize\vert #2}
\newcommand{\N}{\mathbb{N}}
\newcommand{\R}{\mathbb{R}}
\newcommand{\C}{\mathbb{C}}
\newcommand{\HH}{\mathsf{H}}
\newcommand{\CH}{\overline{\mathsf{H}}}
\newcommand{\K}{\mathsf{K}}
\newcommand{\CK}{\overline{\mathsf{K}}}
\newcommand{\mc}{\mathcal}
\newcommand{\id}{\text{id}}
\newcommand{\op}[1]{\operatorname{#1}}
\newcommand{\from}{\colon}
\newtheorem{thm}{Theorem}[section]
\newtheorem{cor}[thm]{Corollary}
\newtheorem{lem}[thm]{Lemma}
\newtheorem{prop}[thm]{Proposition}
\theoremstyle{definition}
\newtheorem{defn}[thm]{Definition}
\newtheorem{rem}[thm]{Remark}
\numberwithin{equation}{section}
\begin{document}

\title[$q$-Araki-Woods Algebras]
{Complete metric approximation property for $q$-Araki-Woods Algebras} 

\date{\today}

\author{Stephen Avsec}
\address[SA]
{Chicago, Illinois, USA}
\email{stephen.avsec@gmail.com}
\author{Michael Brannan}
\address[MB]
{Department of Mathematics,
Mailstop 3368, Texas A\&M University, 
College Station, TX 77843-3368, USA}
\email{mbrannan@math.tamu.edu}
\author{Mateusz Wasilewski}
\address[MW]{Institute of Mathematics of the Polish Academy of Sciences, ul. \'{S}niadeckich 8, 00-656 Warszawa, Poland}
\email{mwasilewski@impan.pl}
\keywords{$q$-Araki-Woods algebra, ultraproduct, radial multiplier, approximation properties}
\subjclass[2010]{Primary 46L10, 20G42; Secondary 46L54, 22D25}
\thanks{MW was partially supported by the NCN (National Centre of Science) grant  
2016/21/N/ST1/02499.}

\begin{abstract}
By adapting an ultraproduct technique of Junge and Zeng, we prove that radial completely bounded multipliers on $q$-Gaussian algebras transfer to $q$-Araki-Woods algebras. As a consequence, we establish the $w^{\ast}$-complete metric approximation property for all $q$-Araki-Woods algebras. We apply the latter result to show that the canonical ultraweakly dense C$^\ast$-subalgebras of $q$-Araki-Woods algebras are always QWEP.
\end{abstract}
\maketitle

\section{Introduction}
The study of finite approximation properties has always played a central role in the structure and classification program for operator algebras.  In the amenable setting this can be seen, for example, in the seminal work of Connes on the classification of injective factors \cite{MR0454659} and also in Elliot's classification program for simple nuclear C$^\ast$-algebras \cite{MR2415391}.  For non-amenable operator algebras, there are two approximation properties that arise as weak forms of amenability that stand out:  the \textbf{Haagerup property} and the \textbf{completely bounded approximation property}.  
These two operator algebraic properties have their roots in the deep work of Cowling, de Canni\`{e}re and Haagerup on the completely bounded multipliers of Fourier algebras and group von Neumann algebras (cf. \cite{MR520930, MR784292, MR996553}).  In the group context, amenability of a (discrete) group $G$ corresponds to the existence of an approximate identity in the Fourier algebra $A(G)$ consisting of finitely supported normalised positive definite functions.  The Haagerup property arises when one relaxes the finite support assumption and allows for an approximate unit of normalized positive definite functions that merely vanish at infinity (cf. \cite{MR718798} for the connection to group von Neumann algebras).    If one instead insists on having a finitely supported approximate unit for $A(G)$,  but allows for functions of more general type (those uniformly bounded in the completely bounded Fourier multiplier norm) this results in the fertile and robust notion of \textbf{weak amenability} (cf. \cite{MR996553}).    This latter notion has a straightforward generalization to $C^{\ast}$-algebras and von Neumann algebras, yielding the so-called (w$^*$-)completely bounded approximation property ((w$^\ast$)-CBAP). The situation is a little more subtle when translating the Haagerup property to arbitrary von Neumann algebras, and this was obtained only very recently  (cf. \cite{MR3431616} and \cite{MR3395459} for two different, but equivalent, approaches).

The w$^\ast$-CBAP has proved to be a remarkable tool in the study of non-amenable operator algebras.  Indeed, it yields a numerical invariant, called the {\it Cowling-Haagerup constant}, which was used by Cowling and Haagerup \cite{MR996553} to distinguish the group von Neumann algebras arising from lattices in the Lie groups $\text{Sp}(1,n)$.   Recently, in the breakthrough work of Ozawa and Popa (cf. \cite[Theorem 3.5]{MR2680430} and \cite{MR2393183}), the w$^\ast$-CMAP was shown to be intimately connected to several remarkable indecomposability results for finite von Neumann algebras, such as strong solidity, absence of Cartan subalgebras, primeness, and so on. 

All the results mentioned about pertain mostly to (semi)finite von Neumann algebras.  However, several recent advancements have been made in the study of type III algebras.  Most notably, the work of Isono \cite{MR3391904, MR3589353} on the structural theory of non-unimodular free quantum group factors, as well as Boutonn\'et, Houdayer and Vaes' very recent proof of strong solidity for Shlyakhtenko's free Araki-Woods factors \cite{1512.04820}.  These latter algebras constitute the very first examples of non-injective strongly solid type III factors.  Again, in the type III setting a key role is played by the w$^\ast$-CBAP, which had been established previously by Houdayer and Ricard \cite{MR2822210} for free Araki-Woods algebras, and by De Commer, Yamashita and Freslon in the free quantum group case \cite{MR3238527}. 
% One of the important results is the proof of the complete metric approximation property of free Araki-Woods factors in \cite{MR2822210}. The authors of this article also show how this connects with notions such as solidity. It was even shown that free Araki-Woods factors are always strongly solid (cf. \cite{1512.04820}); the importance of complete metric approximation property in obtaining such results comes from the work of Ozawa and Popa (cf. \cite[Theorem 3.5]{MR2680430}).

The present paper is concerned with the so-called $q$-Araki-Woods algebras $\Gamma_q(\HH)$, which were introduced by Hiai in \cite{MR2018229}. These (typically type III) von Neumann algebras are generated by the real parts of certain creation operators acting on a $q$-deformed Fock space $\mc F_q(\HH)$ (introduced in \cite{MR1105428}).  $\Gamma_q(\HH)$ can be viewed as a deformation of a free Araki-Woods factor depending on a parameter $q \in (-1,1)$ ($q=0$ being the undeformed case).  In many senses the $q$-Araki-Woods algebras are expected to be structurally very similar to their free, undeformed cousins.  In fact, it is even known that for and $\dim\HH < \infty$ and $|q|<< 1$ ,  $\Gamma_q(\HH)$ is isomorphic to its free cousin (cf. \cite[Theorem 4.5]{MR3312436}). However, not so much is known about these algebras in the whole admissible regime of the parameter $q$.  Let us just mention some partial results:  Very recently, advances were made on the  factoriality problem (cf. \cite{1606.04752} and \cite{1607.04027}). In many cases it is also known that $q$-Araki-Woods algebras are non-injective (cf. \cite{MR2091676}). For both properties there is really one case left open -- $q$-Araki-Woods algebras built from a two-dimensional Hilbert space  $\HH$, in which one cannot rely in any way on techniques used for $q$-Gaussian algebras, their tracial predecessors. All $q$-Araki-Woods algebras are known to be QWEP\footnote{A $C^{\ast}$-algebra is QWEP  if it is a quotient of a $C^{\ast}$-algebra possessing the weak expectation property.}  (cf. \cite{MR2200739}), and it was only recently shown by the third named author that these algebras possess the Haagerup approximation property (cf. \cite{1605.06034}).  

In this paper, our goal is to establish the w$^\ast$-CBAP for all $q$-Araki-Woods algebras.  Following Houdayer and Ricard's lead from the  free case \cite{MR2822210}, we approach this problem by trying to characterize a natural class of completely bounded maps on these algebras, called \textbf{radial multipliers}, and estimate their norms.  The classification problem for radial multipliers  appears to be hard even for small  values of $|q|$ because the known isomorphism between a $q$-Araki-Woods algebra and a free Araki-Woods factor does not carry radial multipliers to radial multipliers.  So even in this setting new techniques are crucial. In \cite{MR2822210}, the authors used the universal property of the Fock representation of the Toeplitz algebra to translate the question of computing the completely bounded norm of a radial multiplier on a free-Araki-Woods factor to an equivalent problem of computing the completely bounded norm of the {\it same} multiplier, viewed now as a radial Fourier multiplier on a free group.  In this latter setting,  one has an explicit formula (cf. \cite[Theorem 1.2]{MR2748193}) involving the trace-class norm of a Hankel matrix associated with the symbol of the multiplier.  In particular, it follows from this result that the completely bounded norms of radial multipliers on free Araki-Woods factors {\it do not} depend on the type structure of the algebra.  In the $q$-deformed setting, we conjecture that the same type-invariance for radial multipliers should hold for all $q$-Araki-Woods algebras. Unfortunately, if one tries to mimic the approach of Houdayer and Ricard in the free case, several major issues arise.  One of them is that one has to work now with the Fock representation of the $q$-deformed Toeplitz algebras, and it is an interesting open problem to settle the universality question for the Fock representation here.  In this paper we follow a different route, inspired by transference principles for multipliers.  More precisely, we develop a non-tracial version of an ultraproduct embedding theorem of Junge and Zeng for mixed $q$-Gaussian algebras \cite{1505.07852}.  Our construction (Theorem \ref{Th:Embedding}) yields a $q$-quasi-free state-preserving embedding of an arbitrary $\Gamma_q(\HH)$ into an ultraproduct of tensor products of tracial $q$-Gaussian algebras and other $q$-Araki-Woods algebras.   Using Theorem \ref{Th:Embedding}, we show that it is possible to transfer radial multipliers on (tracial) $q$-Gaussian algebras to arbitrary $q$-Araki-Woods algebras in such a way that the completely bounded norm does not increase (Theorem \ref{Th:Transference}).   Our transference result provides strong evidence towards the conjecture that radial multipliers on $q$-Araki-Woods algebras do not depend on the type structure, and we fully expect (but are unable to prove at this time) that our transference principle should be isometric and bijective.  

In any case, Theorem \ref{Th:Transference} does provide us with some new examples of completely bounded radial multipliers on $q$-Araki-Woods algebras.  These are the projections onto Wick words of a given finite length.  Upper bounds for the norms of such multipliers were obtained previously for $q$-Gaussian algebras by the first named author in \cite{1110.4918}.  These norm estimates together with the extended second quantisation functor \cite{1605.06034} turn out to be exactly what we need to establish the main result  of the paper: the w$^*$-CBAP for all $q$-Araki-Woods algebras.  In fact, just as in the free case, we obtain the completely contractive version of this property (see Section \ref{prelim} for the relevant definition):

\begin{thm}\label{Th:CMAP}
Let $\Gamma_q(\mathsf{H})$ be a $q$-Araki-Woods algebra. Then $\Gamma_q(\mathsf{H})$ has the $w^{\ast}$-complete metric approximation property.
\end{thm}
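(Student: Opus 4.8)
The plan is to produce, for every $\varepsilon>0$, a net of normal finite‑rank maps on $\Gamma_q(\HH)$ converging to the identity in the point‑weak$^\ast$ topology with completely bounded norms at most $1+\varepsilon$; this gives the $w^\ast$‑CMAP. Two families of maps will be combined. For $0<r<1$, let $M_r=\Gamma_q(r\cdot\id_{\HH})$ be the second quantisation of the scalar contraction $r\cdot\id_{\HH}$: this is a normal, unital, completely positive, $q$‑quasi‑free state‑preserving map acting as multiplication by $r^{n}$ on the length‑$n$ Wick words, so $\|M_r\|_{\op{cb}}=1$ and $M_r\to\id$ point‑weak$^\ast$ as $r\uparrow 1$ (it converges in norm on each Wick word, and the $M_r$ are uniformly bounded). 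For $N\in\N$, let $\Phi_N=\sum_{n=0}^{N}P_n$ be the projection onto the span of the Wick words of length at most $N$, where $P_n$ is the radial multiplier with symbol $\mathbbm{1}_{\{n\}}$. The composite $M_r\Phi_N=\sum_{n=0}^{N}r^{n}P_n$ is a radial multiplier, and when $\dim\HH<\infty$ its range lies in the finite‑dimensional span of the Wick words of length $\le N$, so it is finite rank.

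The decisive input is the completely bounded norm of these radial multipliers. By \cite{1110.4918}, on every \emph{tracial} $q$‑Gaussian algebra the projection onto the Wick words of length $n$ is a completely bounded radial multiplier whose completely bounded norm grows subexponentially (in fact polynomially) in $n$, with the implied constant depending only on $q$; write $\|P_n\|_{\op{cb}}\le p_q(n)$. By the transference principle of Theorem \ref{Th:Transference} — which rests on the $q$‑quasi‑free state‑preserving embedding of Theorem \ref{Th:Embedding} — the same symbol defines a completely bounded radial multiplier on $\Gamma_q(\HH)$ with $\|P_n\|_{\op{cb}}\le p_q(n)$. Hence, for fixed $r\in(0,1)$,
\[
\|M_r-M_r\Phi_N\|_{\op{cb}}=\Bigl\|\sum_{n>N}r^{n}P_n\Bigr\|_{\op{cb}}\le\sum_{n>N}r^{n}p_q(n)=:\delta_q(r,N),
\]
and since $\sum_{n\ge0}r^{n}p_q(n)<\infty$ we get $\delta_q(r,N)\to0$ as $N\to\infty$. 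Thus $M_r\Phi_N\to M_r$ point‑weak$^\ast$ as $N\to\infty$ (they agree on any fixed Wick word once $N$ is large, and $\{M_r\Phi_N\}_N$ is uniformly bounded), with $\limsup_{N}\|M_r\Phi_N\|_{\op{cb}}\le 1$. Combining this with $M_r\to\id$ as $r\uparrow1$, a routine diagonal argument over the pairs $(r,N)$ yields, when $\dim\HH<\infty$, the required net of normal finite‑rank maps, i.e.\ the $w^\ast$‑CMAP.

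It remains to remove the restriction $\dim\HH<\infty$. When the one‑parameter group underlying $\HH$ is not almost periodic there are no nonzero finite‑dimensional invariant subspaces, so $\Gamma_q(\HH)$ is not an evident inductive limit of $q$‑Araki‑Woods algebras attached to finite‑dimensional spaces; instead one invokes the functoriality supplied by the extended second quantisation of \cite{1605.06034}. That machinery provides a net of normal unital completely positive maps on $\Gamma_q(\HH)$ converging to $\id$ point‑weak$^\ast$ and each factoring through $\Gamma_q(\HH_j)$ for some finite‑dimensional $\HH_j$ — this is precisely the finite‑dimensional approximation already used there to establish the Haagerup property, and it parallels Houdayer–Ricard's treatment of the free case \cite{MR2822210}. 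Pre‑ and post‑composing the finite‑rank maps $M_r\Phi_N$ on $\Gamma_q(\HH_j)$ with these factorisation maps yields normal finite‑rank maps on $\Gamma_q(\HH)$ of completely bounded norm $\le 1+\delta_q(r,N)$ converging to $\id$, completing the proof. I expect this last reduction to be the most technical point; the conceptual heart of the argument is the interplay between the transference principle (Theorems \ref{Th:Embedding}--\ref{Th:Transference}) and the subexponential growth of $\|P_n\|_{\op{cb}}$ from \cite{1110.4918}, everything else being an assembly of existing ingredients.
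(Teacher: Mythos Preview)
Your proposal is correct in all the essential points and matches the paper's strategy: the damping $M_r=\Gamma_q(r\cdot\id)$, the length cut-off $\Phi_N=Q_N$, the transference of the polynomial bound $\|P_n\|_{\op{cb}}\le C(q)^2(n+1)^2$ from the tracial case via Theorem~\ref{Th:Transference}, and a finite-rank truncation coming from Houdayer--Ricard. The one difference is organisational. You split into finite- and infinite-dimensional $\HH$ and, for the latter, propose a factorisation through auxiliary $q$-Araki--Woods algebras $\Gamma_q(\HH_j)$. The paper avoids this detour entirely: it takes the Houdayer--Ricard net $(T_i)$ of finite-rank contractions on $\HH$ preserving $\HH_\R$ and sets $\Gamma_{n,t,i}=\Gamma_q(e^{-t}T_i)\,Q_n$ directly as an endomorphism of $\Gamma_q(\HH)$. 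Since $T_i$ has finite-dimensional range and $Q_n$ bounds the Wick length, this map is finite rank on $\Gamma_q(\HH)$ immediately; no intermediate $\Gamma_q(\HH_j)$ is needed, and the cb-norm estimate is the same as yours because $\|\Gamma_q(e^{-t}T_i)Q_n\|_{\op{cb}}\le\|\Gamma_q(e^{-t})Q_n\|_{\op{cb}}$. This is both simpler and sidesteps a subtlety in your formulation: the range of $T_i$ need not be invariant under the one-parameter group $(U_t)$, so it is not clear that your ``$\HH_j$'' carries the structure required to define a $q$-Araki--Woods algebra in its own right. The second quantisation $\Gamma_q(T_i)$, however, is perfectly well defined as a ucp map on $\Gamma_q(\HH)$ by \cite[Theorem~3.4]{1605.06034}, which is all that is actually needed.
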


As an application of the above result, we are able to answer affirmatively a question left open by Nou \cite[Remark after Theorem 6.3]{MR2200739}, concerning whether or not the canonical w$^\ast$-dense C$^\ast$-subalgebras $\mc A_q(\HH) \subseteq \Gamma_q(\HH)$ are always QWEP;  see Corollary \ref{QWEP}.  It is our hope that Theorem \ref{Th:CMAP} will lead to a deeper  understanding of the structure of $q$-Araki-Woods algebras.  In particular, we expect this result to be a fundamental tool in the applications of deformation/rigidity tools to these algebras.

Let us conclude this section with a description of the layout of the main body of the paper.  In Section \ref{prelim} we introduce the relevant notation and background on operator spaces, von Neumann ultraproducts, and $q$-Araki-Woods algebras.  In Section \ref{embedd}, we construct our ultraproduct embedding and apply it in Section \ref{transfer} to obtain the transference principle for radial multipliers.  Finally, we present the proof of Theorem \ref{Th:CMAP} in Section \ref{proofs}.

\subsection*{Acknowledgements}
The authors are grateful to Marius Junge and \'Eric Ricard for their insights and encouragement.  Parts of this project were completed during a visit of MW to Texas A\&M University and a visit of MB and MW to L'Universit\'e de Caen.  The authors are very grateful to Gilles Pisier, \'Eric Ricard and Roland Vergnioux for facilitating these visits. 
\section{Preliminaries} \label{prelim}
\subsection{Some notation}
Throughout this paper, inner products on complex Hilbert spaces are always taken to be conjugate-linear in the left variable.  The algebraic tensor product of two complex vector spaces $V,W$ will always be denoted by $V \odot W$, and elementary tensors in $V \odot W$ will also be denoted using the symbol $\odot$.    Given a natural number $n \in \N$, we denote by $[n]$ ($[n]_{0}$) the ordered set $\{1,2, \ldots, n\}$ ($\{0,1,2,\dots,n\}$).  Given $n,d \in \N$ we will interchangeably view multi-indices $k = (k(1), k(2), \ldots, k(d)) \in [n]^d$ as functions $k:[d] \to [n]$.  Given $d \in \N$, we denote by $\mc P(d)$ the lattice of partitions of the ordered set $[d]$, and by $\mc P_2(d) \subset \mc P(d)$ the subset of pair partitions (i.e., partitions of $[d]$ into disjoint subsets (``blocks'') of size $2$).  The partial order $\le$ on $\mc P(d)$ is given by the usual refinement order on partitions, and given $\pi,\sigma \in \mc P(d)$, we denote by $\pi \vee \sigma \in \mc P(d)$ the lattice theoretic join of $\pi$ and $\sigma$ with respect to the partial order $\le$. The number of blocks of a partition $\sigma$ will be denoted by $|\sigma|$.  Finally, given a multi-index $k:[d] \to [n]$, we denote by $\ker k \in \mc P(d)$ the partition defined by level sets of $k$: that is,  $1 \le r,s \le d$ belong to the same block of $\ker k$ iff $k(r) = k(s)$.

\subsection{Operator spaces}
Some amount of the theory of operator spaces is necessary for our work; even the statement of the main result uses notions from this field. Recall that an \textbf{operator space} is a Banach space $X$ endowed with a specific choice of norms on the matricial spaces $\op{M}_{n}(X):=\op{M}_{n}\odot X$ satisfying the so-called Ruan axioms, ensuring that it comes from an isometric embedding of $X$ into$\op{B}(\HH)$, the C$^\ast$-algebra of bounded linear operators on some Hilbert space $\HH$. Given a pair of operator spaces $X$, $Y$ and a
linear map $T:X \to Y$, the {\textbf cb norm} of $T$ is given by 
\[
\|T\|_{cb}:= \sup_{n \in \N} \|\op{Id}_{n} \odot T: \op{M}_{n} \odot X \to \op{M}_n \odot Y\|. 
\]
If $\|T\|_{cb} < \infty$, we say that $T$ is  {\textbf completely bounded (cb)}.
We can now define the approximation properties that we are interested in. Let $X$ be an operator space. We say that $X$ possesses the \textbf{completely bounded approximation property} if there exists a net $(\Phi_{i})_{i \in I}$ of finite rank completely bounded maps on $X$ such that $\sup_{i\in I} \|\Phi_{i}\|_{cb} < \infty$, and $\lim_{i\in I} \|\Phi_i(x) -x\|=0$ for every $x\in X$. If we can find a net $(\Phi_{i})_{i \in I}$ such that $\|\Phi_{i}\|_{cb} \leqslant 1$ then we say that $X$ has the \textbf{complete metric approximation property}.
For a dual operator space $X$ (i.e. $X \simeq (X_{\ast})^{\ast}$ for some operator space $X_{\ast}$), there is a suitable analogue of this approximation property which takes into account this additional structure. Namely, we say that $X$ has the \textbf{$w^{\ast}$-complete metric approximation property} if there exists a net $(\Phi_i)_{i \in I}$ of finite rank  $w^{\ast}$-continuous completely bounded maps on $X$ such that
$\|\Phi_{i}\|_{cb} \leqslant 1$ for each $i \in I$, and $\lim_{i \in I} \Phi_i(x) = x$ (weak-$\ast$) for every $x \in X$.

We need to discuss two operator space structures associated with a given Hilbert space.
\begin{defn}
Let $\HH$ be a complex Hilbert space. We define the following operator space structures on $\HH$:
\begin{enumerate}[{\normalfont (i)}]
\item the \textbf{column Hilbert space} structure $\HH_{c}$ is given by the identification $\HH \simeq \op{B}(\C, \HH)$;
\item the \textbf{row Hilbert space} structure $\HH_{r}$ is given by the identification $\HH \simeq \op{B}(\CH, \C)$.
\end{enumerate}  
\end{defn}
\begin{rem}
Row (column) Hilbert spaces are \textbf{homogeneous operator spaces}, i.e. any contraction $T:\K \to \HH$ is a complete contraction $T: \K_{r} \to \HH_{r}$ ($T:\K_{c} \to \HH_{c}$) (cf. \cite[Theorem 3.4.1 and Proposition 3.4.2]{MR1793753}).
\end{rem}
These Hilbertian operator spaces will turn out to be critical for obtaining a right formulation of the non-commutative Khintchine inequalities (cf. Proposition \ref{cor:NCKhintchine} in Subsection \ref{qAraki}). 

In the theory of operator spaces there is a variety of different tensor products, analogous to tensor products of Banach spaces. There is, however, one tensor product that stands out and does not have a Banach space theoretic counterpart -- the Haagerup tensor product.
\begin{defn}
Let $X$ and $Y$ be operator spaces. We define a bilinear map $\op{M}_{n,r}(X) \times \op{M}_{r,n}(Y) \ni (x,y) \mapsto x\cdot y \in \op{M}_{n}(X \odot Y)$ to be the bilinear extension of the assignment $(A \odot x, B \odot y) \mapsto (AB, x \odot y)$. For any $z \in \op{M}_n(X \odot Y)$ we define the norm 
\[
\|z\|_{h,n} := \inf \{\|x\|\|y\|: z = x\cdot y, \ x\in\op{M}_{n,r}(X), \ y\in\op{M}_{r,n}(Y), \ r\in \N\}.
\]
This sequence of norms on the matricial spaces $\op{M}_{n}(X \odot Y)$ satisfies Ruan's axioms and therefore defines an operator space structure on $X \odot Y$, called the \textbf{Haagerup tensor product}.
The completions with respect to the norms $\|\cdot\|_{h,n}$ will be denoted $\op{M}_n(X\otimes_{h} Y)$.  For more information on the Haagerup tensor product, consult \cite[Chapter 9]{MR1793753} and \cite[Chapter 5]{MR2006539}.
\end{defn}

Later on we will need the following proposition.
\begin{prop}[Proposition 9.3.4 from \cite{MR1793753}]
Let $\K$ and $\HH$ be complex Hilbert spaces. Then the assignment $\HH \odot \CK \ni \xi \odot \eta \mapsto \ket{\xi}\bra{\eta} \in \op{K}(\K, \HH)$ (the compact operators) extends to a complete isometry $\HH_{c} \otimes_{h} \CK_{r} \simeq \op{K}(\K, \HH)$.
\end{prop}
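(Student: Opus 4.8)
The plan is to prove that the ``composition'' map
\[
J \colon \HH_c \odot \CK_r \To \op{K}(\K,\HH), \qquad \xi \odot \eta \longmapsto \ket{\xi}\bra{\eta},
\]
is a complete isometry onto the (norm-dense) subspace of finite-rank operators, and then to conclude by passing to completions. On the algebraic level $J$ is just the classical linear identification of $\HH \odot \CK$ with the finite-rank operators $\K \to \HH$, so it is an injective linear bijection onto that subspace; the key structural observation I would exploit is that, under the canonical identifications $\HH_c \simeq \op{B}(\C,\HH)$ and $\CK_r \simeq \op{B}(\K,\C)$, the rank-one operator $\ket{\xi}\bra{\eta}$ is nothing but the composition of $\xi \in \op{B}(\C,\HH)$ with $\eta \in \op{B}(\K,\C)$ through the common copy of $\C$.

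For complete contractivity of $J$, I would argue directly from the definition of the Haagerup tensor product. Fix $n$ and $Z \in \op{M}_n(\HH_c \odot \CK_r)$ together with a factorization $Z = x \cdot y$, where $x \in \op{M}_{n,r}(\HH_c)$ and $y \in \op{M}_{r,n}(\CK_r)$. Using the composition description of $\ket{\cdot}\bra{\cdot}$ and the isometric identifications $\op{M}_{n,r}(\HH_c) = \op{B}(\C^{r},\HH^{n})$, $\op{M}_{r,n}(\CK_r) = \op{B}(\K^{n},\C^{r})$, one checks entrywise that the amplification $J_n(Z) \in \op{M}_n(\op{K}(\K,\HH)) = \op{K}(\K^{n},\HH^{n})$ coincides with the operator product $x \circ y$; hence $\|J_n(Z)\| = \|x \circ y\| \le \|x\|\,\|y\|$, and taking the infimum over all factorizations gives $\|J_n(Z)\| \le \|Z\|_{h,n}$. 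So $J$ extends to a complete contraction $\HH_c \otimes_h \CK_r \to \op{K}(\K,\HH)$.

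The substance is in showing that $J$ is a complete \emph{isometry}, and here the plan is to use the singular value decomposition to build an optimal factorization. Given $Z = (z_{pq}) \in \op{M}_n(\HH_c \odot \CK_r)$, set $T := J_n(Z) \in \op{K}(\K^{n},\HH^{n})$, a finite-rank operator, and write $T = \sum_{j=1}^{m} s_j \ket{U_j}\bra{V_j}$ with $s_j > 0$ and $(U_j)$, $(V_j)$ orthonormal systems in $\HH^{n}$, $\K^{n}$ respectively. Decomposing $U_j = (u_{j,1},\dots,u_{j,n})$ and $V_j = (v_{j,1},\dots,v_{j,n})$ and comparing $(p,q)$-blocks, injectivity of $J$ forces $z_{pq} = \sum_{j} s_j\, u_{j,p} \odot \overline{v_{j,q}}$. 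Then I would factor $Z = X \cdot Y$ with $X \in \op{M}_{n,m}(\HH_c)$, $X_{pj} = s_j^{1/2} u_{j,p}$, and $Y \in \op{M}_{m,n}(\CK_r)$, $Y_{jq} = s_j^{1/2}\,\overline{v_{j,q}}$; because $(U_j)$ and $(V_j)$ are orthonormal, a short computation (write $X$ as an isometry composed with $\mathrm{diag}(s_1^{1/2},\dots,s_m^{1/2})$, and similarly for $Y$) gives $\|X\| = \|Y\| = (\max_j s_j)^{1/2} = \|T\|^{1/2}$. Hence $\|Z\|_{h,n} \le \|X\|\,\|Y\| = \|T\| = \|J_n(Z)\|$, and together with the previous step $\|Z\|_{h,n} = \|J_n(Z)\|$ for all $n$.

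Finally, since $\HH_c \odot \CK_r$ is dense in $\HH_c \otimes_h \CK_r$ by definition and the finite-rank operators are norm-dense in $\op{K}(\K,\HH)$, the complete isometry $J$ extends to a completely isometric isomorphism $\HH_c \otimes_h \CK_r \simeq \op{K}(\K,\HH)$. I expect the main obstacle to be purely notational: one must keep the row/column structures and the conjugation on $\K$ consistent so that the operator-composition identity in the second paragraph and the block comparison in the third are literally correct. Once those conventions are pinned down, the singular value decomposition does all the remaining work.
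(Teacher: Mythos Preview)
Your proof is correct and is essentially the standard argument for this result. Note, however, that the paper does not supply its own proof of this proposition: it is quoted directly as Proposition~9.3.4 from \cite{MR1793753} and used as a black box, so there is no ``paper's proof'' to compare against. Your argument---complete contractivity from the very definition of the Haagerup norm via the composition interpretation of $\ket{\xi}\bra{\eta}$, and the reverse inequality via an optimal factorization coming from the singular value decomposition of $J_n(Z)$---is precisely the approach taken in the Effros--Ruan reference, so you have in effect reproduced the original proof.
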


\subsection{\texorpdfstring{$q$}{}-Araki-Woods algebras}\label{qAraki}
We present here a construction due to Hiai (cf. \cite{MR2018229}), which builds upon previous developments: $q$-Gaussian algebras of Bo\.{z}ejko and Speicher (cf. \cite{MR1463036}) and free Araki-Woods factors defined by Shlyakhtenko (cf. \cite{MR1444786}).

The starting point is a real Hilbert space $\HH_{\R}$ equipped with a continuous one parameter group of orthogonal transformations $(U_{t})_{t \in \R}$. The extension of $(U_{t})_{t \in \R}$ to a unitary group on $\HH_{\C}$, the complexification of $\HH_{\R}$, will be still denoted by $(U_{t})_{t \in \R}$. By Stone's theorem, there exists an injective, positive operator $A$ on $\HH_{\C}$ such that $U_{t} = A^{it}$. On $\HH_{\C}$ we define a new inner product $\braket{\xi}{\eta}_{U} := \braket{\xi}{\frac{2A}{1+A}\eta}$ and denote by $\HH$ the completion of $\HH_{\C}$ with respect to this inner product. Note that the norms defined by $\braket{\cdot}{\cdot}_{U}$ and $\braket{\cdot}{\cdot}$ coincide on $\HH_{\R}$. This implies that $I$, the complex conjugation on $\HH_{\C}$, is a closed operator on $\HH$ with dense domain $\HH_\C$.

Next we form the \textbf{$q$-Fock space} $\mathcal{F}_{q}(\HH)$. Since we will have to delve deeper into its structure later on, we will present the construction here. First, let us fix $q \in (-1,1)$. For any $n$ we define $P_{q}^{n}: \HH^{\odot n} \to \HH^{\odot n}$ by
\begin{equation}\label{form:qinnerproduct}
P_{q}^{n} (e_{1}\odot\dots \odot e_{n}) = \sum_{\sigma \in S_{n}} q^{i(\sigma)} e_{\sigma(1)}\odot\dots\odot e_{\sigma(n)},
\end{equation}
where $i(\sigma):=\left|\{(i,j) \in [n]^2: i<j \text{ and } \sigma(i)>\sigma(j)\}\right|$ is the number of inversions. This operator is (strictly) positive definite (cf. \cite[Proposition 1]{MR1105428}), so it defines an inner product on $\HH^{\odot n}$ by $\braket{\xi}{\eta}_{q} := \braket{\xi}{P_{q}^{n}\eta}$; the completion with respect to this inner product will be denoted by $\HH_{q}^{\otimes n}$. The $q$-Fock space is defined by the orthogonal direct sum $\mathcal{F}_{q}(\HH):=\bigoplus_{n\geqslant 0} \HH_{q}^{\otimes n}$. For our purposes, there  are two important sets of operators defined on the $q$-Fock space. For any $\xi \in \HH$ we define the \textbf{$q$-creation operator} $a_{q}^{\ast}(\xi) \in \op B(\mc F_q(\HH))$ by 
\[
a_{q}^{\ast}(\xi)(e_{1}\odot\dots\odot e_{n}) = \xi\odot e_{1}\odot \dots\odot e_{n}
\]
and the \textbf{$q$-annihilation operator} $a_{q}(\xi) = \left(a_{q}^{\ast}(\xi)\right)^{\ast}\in \op B(\mc F_q(\HH))$. It is known (cf. \cite[Remark 1.2]{MR1463036}) that 
\[\|a_q(\xi)\| = \|a_q^*(\xi)\| = \Big\{\begin{matrix}
\|\xi\|, & 0 \ge q > -1 \\
(1-q)^{-1/2}\|\xi\| & 0 < q <1. 
\end{matrix} \qquad (\xi \in \HH).\] We are now ready to define $q$-Araki-Woods algebras.
\begin{defn}
Let $(\HH_{\R}, (U_{t})_{t \in \R})$ be a real Hilbert space endowed with a one-parameter group of orthogonal transformations. Let $\HH$ be the complex Hilbert space obtained as the completion of $\HH_{\C}$ with respect to $\braket{\cdot}{\cdot}_{U}$. For any $\xi \in \HH_{\R}$ we define $s_{q}(\xi) \in \op B(\mc F_q(\HH))$ by $s_{q}(\xi)=a_{q}^{\ast}(\xi)+a_{q}(\xi)$. We define the \textbf{$q$-Araki-Woods algebra} $\Gamma_{q}(\HH)$ to be the von Neumann algebra generated by the set $\{s_q(\xi): \xi \in \HH_{\R}\}$ inside $\op{B}(\mathcal{F}_{q}(\HH))$.

In the special case $U_t=\mathds{1}$ we obtain the $q$-Gaussian algebras of Bo\.{z}ejko and Speicher and we will denote them, following the tradition, by $\Gamma_q(\HH_{\R})$ (cf. \cite[Definition 2.1]{MR1463036}). 
\end{defn}
\begin{rem}
Even though we suppress the pair $(\HH_{\R}, (U_{t})_{t \in \R})$ in the notation, we should remember how the Hilbert space $\HH$ was constructed.
\end{rem}
There is a distinguished vector $\Omega$ in $\mathcal{F}_{q}(\HH)$, called the \textbf{vacuum vector}, which is equal to $1 \in \C \simeq \HH_q^{\otimes 0} \subset \mathcal{F}_{q}(\HH)$. It is not hard to see that $\Omega$ is cyclic and separating for $\Gamma_{q}(\HH)$. In fact, one can verify that the algebraic direct sum $\bigoplus_{n\geqslant 0} \HH_{\C}^{\odot n}$ is contained in $\Gamma_{q}(\HH)\Omega$. Using the generator $A$, one can explicitly identify a big enough subset of the commutant $\Gamma_{q}(\HH)'$ for which $\Omega$ is cyclic (cf. \cite[Lemma 3.1]{MR1444786}), so $\Omega$ is also separating for $\Gamma_q(\HH)$. It follows that the normal state $\chi(\cdot) = \braket{\Omega}{\cdot \Omega}$ is faithful on $\Gamma_{q}(\HH)$ (called the \textbf{$q$-quasi-free state}) and $\mathcal{F}_{q}(\HH)$ can be identified with the GNS Hilbert space associated with $\chi$. What is more, the commutant can be identified with the version of our algebra acting on the right, but in this case not only one has to use right versions of $s_q(\xi)$ but also the real Hilbert space that one draws the vectors from needs to be changed. We record here for later use the so-called \textbf{Wick formula}, which describes the joint moments of the generators $\{s_q(\xi)\}_{\xi \in \HH_\R}$ with respect to $\chi$.

\begin{thm}[\cite{MR2018229}, \cite{MR2200739}]
For any $d \in \N$ and any $e_1, \ldots, e_d \in \HH_\R$, we have \[\chi(s_q(e_1)s_q(e_2)\ldots s_q(e_d)) = \sum_{\sigma \in \mc P_2(d)} q^{\iota(\sigma)} \prod_{(r,t) \in \sigma} \langle e_r| e_t \rangle_U,\]
where $\iota(\sigma)$ denotes the number of crossings in the pairing $\sigma \in \mc P_2(d)$, and $(r,t) \in \sigma$ indicates that $1 \le r < t \le d$ are paired together by $\sigma$.  If $d$ is odd, we interpret the above (empty) sum as $0$.
\end{thm}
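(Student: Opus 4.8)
The plan is to expand each generator as $s_q(e_i)=a_q^\ast(e_i)+a_q(e_i)$, distribute the product $s_q(e_1)\cdots s_q(e_d)$ into $2^d$ monomials $a_q^{\epsilon_1}(e_1)\cdots a_q^{\epsilon_d}(e_d)$ with $\epsilon_i\in\{\ast,\mathrm{id}\}$, and compute $\braket{\Omega}{a_q^{\epsilon_1}(e_1)\cdots a_q^{\epsilon_d}(e_d)\,\Omega}$ for each by letting the operators act on $\Omega$ from right to left. The only structural input I will use is the pair of standard formulas for the $q$-Fock space: $a_q^\ast(\xi)$ prepends $\xi$ to a simple tensor, while its adjoint acts by the contraction rule $a_q(\xi)(\xi_1\odot\cdots\odot\xi_n)=\sum_{k=1}^n q^{k-1}\langle\xi|\xi_k\rangle_U\,\xi_1\odot\cdots\odot\widehat{\xi_k}\odot\cdots\odot\xi_n$, where $\langle\cdot|\cdot\rangle_U$ is the inner product of $\HH$.

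If $d$ is odd, each monomial changes the number of tensor legs by $\pm1$ at every one of its $d$ steps, so applied to $\Omega$ it lands in $\bigoplus_{n\equiv d\,(\mathrm{mod}\,2)}\HH_q^{\otimes n}$, which is orthogonal to $\C\Omega$; hence $\chi$ vanishes, matching the empty sum.

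For even $d$ the first step is to organise the nonzero contributions. Expanding also the contraction sums, a single term of the full expansion is specified by $\epsilon$ together with, for each annihilation, the leg it removes, and I claim the surviving such terms are in bijection with $\mc P_2(d)$. Given $\sigma$, for each block $\{r,t\}$ with $r<t$ declare position $t$ a creation and position $r$ an annihilation that contracts with the leg created at $t$; reading right to left, that leg is still present when step $r$ is reached, because it is removed only there. Conversely, a term surviving the vacuum inner product must --- by the usual height/ballot constraint on the number of legs --- encode a perfect matching in exactly this way. So each $\sigma$ yields precisely one contribution $q^{m(\sigma)}\prod_{(r,t)\in\sigma}\langle e_r | e_t \rangle_U$, and it remains to compute the exponent. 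When the annihilation at position $r$ fires, the leg carrying $e_t$ has depth $1+c$, where $c$ counts the creations at positions strictly between $r$ and $t$ whose legs are still present, and this contributes $q^{c}$; such a creation is the larger endpoint of a block $\{a,s\}$ of $\sigma$ with $a<r<s<t$, and summing the resulting exponents over all blocks of $\sigma$ counts every crossing pair of blocks of $\sigma$ exactly once, so $m(\sigma)=\iota(\sigma)$.

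The only genuinely delicate point is this accounting of powers of $q$: the bijection with $\mc P_2(d)$ and, above all, the identity $m(\sigma)=\iota(\sigma)$; everything else is routine expansion. One could instead induct on $d$, moving $a_q(e_1)$ to the right past $s_q(e_2)\cdots s_q(e_d)$ via the relation $a_q(\xi)a_q^\ast(\eta)=q\,a_q^\ast(\eta)a_q(\xi)+\langle\xi|\eta\rangle_U$, but the accumulating powers of $q$ are not visible one block at a time, so essentially the same combinatorial count is still needed to finish.
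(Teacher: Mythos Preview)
The paper does not supply its own proof of this statement; it is simply recorded with references to Hiai and Nou, so there is nothing in the paper to compare against directly. Your argument is correct and is essentially the standard combinatorial proof (going back to Bo\.{z}ejko--Speicher in the tracial case): expand each $s_q(e_i)$ into creation and annihilation, identify the surviving vacuum expectations with pair partitions via the ballot/Dyck path constraint, and track the power of $q$ produced at each annihilation. Your key observation---that just before the annihilation at position $r$ fires, the legs sitting in front of $e_t$ are exactly those created at positions $s$ with $r<s<t$ whose partner $a$ satisfies $a<r$, i.e.\ blocks $\{a,s\}$ with $a<r<s<t$---is precisely the point that needs care, and you have it right; summing over all blocks $\{r,t\}$ then counts each crossing once because a crossing pair $\{r_1,t_1\},\{r_2,t_2\}$ with $r_1<r_2<t_1<t_2$ is picked up only when $\{r,t\}=\{r_2,t_2\}$. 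One small cosmetic remark: when you describe the bijection you say ``reading right to left, that leg is still present when step $r$ is reached, because it is removed only there''---this is true, but the fact that the prescription is consistent (every annihilation finds its designated leg actually present) is really just the ballot condition you invoke in the next sentence, so you might merge those two remarks.
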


 Since $ \bigoplus_{n \geqslant 0} \HH_{\C}^{\odot n} \subset \Gamma_{q}(\HH)\Omega \subset \mc F_q(\HH)$,  we are allowed to make the following definition.
\begin{defn}
Let $\xi \in \bigoplus_{n \geqslant 0} \HH_{\C}^{\odot n}$. Then there is exactly one operator $W(\xi) \in \Gamma_{q}(\HH)$, called the \textbf{Wick word} associated with $\xi$, such that $W(\xi)\Omega=\xi$.
\end{defn}
This definition will help us in constructing maps on $\Gamma_{q}(\HH)$ from operators on $\HH$.   Let us first recall a version of this construction on the level of the $q$-Fock space (cf. \cite[Lemma 1.4]{MR1463036}).
\begin{defn}
Let $T\from \K \to \HH$ be a contraction between complex Hilbert spaces. Then the assignment 
\[
\mathcal{F}_{q}(T)(e_{1}\odot\dots\odot e_{n}) = Te_{1}\odot \dots \odot Te_{n}
\]
extends to a contraction $\mathcal{F}_{q}(T)\from \mathcal{F}_{q}(\K) \to \mathcal{F}_{q}(\HH)$, called the \textbf{first quantisation} of $T$.
\end{defn}
On the level of the von Neumann algebra $\Gamma_{q}(\HH)$ it is tempting to extend the assignment 
\[
W(e_{1}\otimes \dots \otimes e_{n}) \mapsto W(Te_{1} \otimes \dots \otimes Te_{n})
\]
to a nice map on $\Gamma_{q}(\HH)$. It turns out that under a mild additional assumption on $T$  the extension exists and is a normal, unital, completely positive\footnote{From now on ``unital, completely positive'' will be abbreviated to ucp.} map. The next proposition is an extension of Theorem 2.11 from \cite{MR1463036}, which is an analogous result for $q$-Gaussian algebras.
\begin{prop}[{ \cite[Theorem 3.4]{1605.06034} }]
Let $(\K_{\R}, (V_{t})_{t \in \R})$ and $(\HH_{\R}, (U_{t})_{t \in \R})$ be real Hilbert spaces equipped with respective one-parameter orthogonal groups. Construct out of them complex Hilbert spaces $\K$ and $\HH$. Suppose that $T\from \K\to \HH$ is a contraction such that $T(\K_{\R}) \subset \HH_{\R}$ (a condition written more succinctly in the form $ITJ=T$, where $J$ and $I$ are complex conjugations on $\K_{\C}$ and $\HH_{\C}$, respectively). Then the assignment $W(e_{1}\otimes \dots \otimes e_{n}) \mapsto W(Te_{1} \otimes \dots \otimes Te_{n})$ extends to a normal ucp map $\Gamma_{q}(T)\from \Gamma_{q}(\K) \to \Gamma_{q}(\HH)$ that preserves the vacuum state. The maps $\Gamma_q(T)$ is called the \textbf{second quantisation} of $T$.
\end{prop}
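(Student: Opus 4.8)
The plan is to factor an arbitrary real-structure-preserving contraction $T$ through the two elementary situations in which the second quantisation is transparent: an isometry of the correct kind, whose $\Gamma_q$ is a normal unital $\ast$-embedding, and a coordinate projection, whose $\Gamma_q$ is a normal conditional expectation. Concretely, $T\colon\K\to\HH$ with $\|T\|\le1$ factors as $T=P_\HH\circ\mc U\circ\iota_\K$, where $\iota_\K\colon\K\hookrightarrow\K\oplus\HH$, $\xi\mapsto(\xi,0)$, is the inclusion, $\mc U=\bigl(\begin{smallmatrix}T & D_{T^\ast}\\ D_T & -T^\ast\end{smallmatrix}\bigr)\colon\K\oplus\HH\to\HH\oplus\K$ is the Sz.-Nagy unitary dilation built from $T$, $T^\ast$ and the defect operators $D_T=(\mathrm{id}-T^\ast T)^{1/2}$, $D_{T^\ast}=(\mathrm{id}-TT^\ast)^{1/2}$, and $P_\HH\colon\HH\oplus\K\to\HH$ is the projection; here $\K\oplus\HH$ and $\HH\oplus\K$ carry the direct-sum orthogonal groups $V_t\oplus U_t$ and $U_t\oplus V_t$. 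I would first record that $\Gamma_q$ is functorial -- $\Gamma_q(S)\Gamma_q(R)=\Gamma_q(SR)$ and $\Gamma_q(\mathrm{id})=\mathrm{id}$, checked on Wick words and extended by normality -- so that $\Gamma_q(T)=\Gamma_q(P_\HH)\circ\Gamma_q(\mc U)\circ\Gamma_q(\iota_\K)$ becomes a composite of normal ucp maps as soon as each factor is understood.

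For the inclusion $\iota_\K$: it is isometric for the deformed inner products (the deformed product on a direct sum is the direct sum of deformed products), sends $\K_\R$ into $(\K\oplus\HH)_\R$, and intertwines $V_t$ with $V_t\oplus U_t$. For any isometry $\iota$ with these three properties the Wick formula shows that the joint $\chi$-moments of $\{s_q(\iota e)\}_{e\in\K_\R}$ in $\Gamma_q(\K\oplus\HH)$ agree with those of $\{s_q(e)\}_{e\in\K_\R}$ in $\Gamma_q(\K)$: only the pairwise deformed inner products of the $e$'s enter the formula, and $\iota$ preserves them. Hence the algebraic $\ast$-isomorphism $s_q(e_1)\cdots s_q(e_d)\mapsto s_q(\iota e_1)\cdots s_q(\iota e_d)$ is $\chi$-preserving and, by passing to the GNS representations, extends to a normal unital $\ast$-embedding $\Gamma_q(\iota_\K)\colon\Gamma_q(\K)\hookrightarrow\Gamma_q(\K\oplus\HH)$; it is automatically completely positive, preserves the vacuum, and carries Wick words to Wick words.

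For the projection $P_\HH$: under the direct-sum group the copy $\Gamma_q(\HH)\subseteq\Gamma_q(\HH\oplus\K)$ generated by $\{s_q(h):h\in\HH_\R\}$ is invariant under the modular automorphism group of $\chi_{\HH\oplus\K}$ -- which is the second quantisation of $U_t\oplus V_t$ and clearly preserves the first summand -- so Takesaki's theorem provides a normal, vacuum-preserving conditional expectation $E\colon\Gamma_q(\HH\oplus\K)\to\Gamma_q(\HH)$, which is in particular ucp; one checks on Wick words that $E=\Gamma_q(P_\HH)$.

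The genuine difficulty -- and it is a type-$\mathrm{III}$ phenomenon -- is the middle factor $\mc U$. In the tracial case ($U_t=V_t=\mathrm{id}$) the hypothesis $T(\K_\R)\subseteq\HH_\R$ forces $T^\ast$ to be real-structure-preserving as well, hence so are $D_T$, $D_{T^\ast}$ and $\mc U$, which moreover trivially intertwines the (trivial) groups, so $\Gamma_q(\mc U)$ is a normal $\ast$-automorphism and one recovers Theorem~2.11 of \cite{MR1463036}. When the modular data are non-trivial this breaks down: even for a two-dimensional $\HH_\R$ one can have $T(\K_\R)\subseteq\HH_\R$ while the adjoint $T^\ast$, taken with respect to the \emph{deformed} inner products, does not send $\HH_\R$ into $\K_\R$; then neither $\mc U$ nor the defect operators preserve the real structure, $\mc U$ need not intertwine the groups, and $\Gamma_q(\mc U)$ is not legitimised by the functoriality above. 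The resolution I would pursue -- and the step I expect to absorb essentially all the effort -- is to establish the statement first for contractions $T$ that additionally \emph{intertwine} the orthogonal groups (where $T^\ast$ is automatically real-structure-preserving, $\mc U$ intertwines the groups, and the argument of the previous three paragraphs applies verbatim, as in Hiai's construction \cite{MR2018229}), and then to reduce a general real-structure-preserving contraction to an intertwining one by realising $(\K_\R,V_t)$ and $(\HH_\R,U_t)$ inside a single $q$-Araki--Woods datum $(\mc L_\R,W_t)$ in which $T$ is induced by a \emph{compression} of an intertwining contraction -- an explicit ampliation of the modular data. Once this reduction is in place, the three ingredients above (Wick-formula transfer of moments, the Takesaki conditional expectation, and functoriality) combine immediately to give that $\Gamma_q(T)$ is normal, ucp, vacuum-preserving, and acts on Wick words by $W(e_1\otimes\cdots\otimes e_n)\mapsto W(Te_1\otimes\cdots\otimes Te_n)$.
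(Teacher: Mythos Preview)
First, note that the paper does not supply a proof of this proposition: it is quoted verbatim from \cite[Theorem~3.4]{1605.06034} and stated without argument, so there is no in-paper proof to compare your attempt against.

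On the substance of your proposal: the architecture (factor $T$ through a Julia--Sz.-Nagy dilation as inclusion, unitary, projection; second-quantise each factor) is the natural line and does recover the tracial case cleanly. You also correctly isolate the genuine type~$\mathrm{III}$ obstruction: with the deformed inner products, $ITJ=T$ does \emph{not} force $JT^{\ast}I=T^{\ast}$, so the Julia matrix $\mc U$ need not preserve the real structure or intertwine the groups, and $\Gamma_q(\mc U)$ is not a priori legitimate. That diagnosis is exactly right.

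The gap is the resolution you propose. You assert that one can ``reduce a general real-structure-preserving contraction to an intertwining one by realising $(\K_\R,V_t)$ and $(\HH_\R,U_t)$ inside a single $q$-Araki--Woods datum $(\mc L_\R,W_t)$ in which $T$ is induced by a compression of an intertwining contraction'', but you neither carry this out nor give any evidence it is possible. An arbitrary real-structure-preserving contraction need not commute with the groups in any way, and there is no obvious ampliation of the modular data under which such a $T$ becomes the corner of an intertwining map --- indeed, producing such a dilation is essentially equivalent to the theorem you are trying to prove. Since you yourself flag this as the step that ``absorbs essentially all the effort'', what you have written is a plan whose main lemma is missing rather than a proof. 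A complete argument must either construct this dilation explicitly (and verify both intertwining and real-structure preservation for the enlarged map), or abandon the dilation route and work directly --- for instance at the Fock-space level, showing that $\mc F_q(T)$ intertwines the embeddings $\Gamma_q(\K)\hookrightarrow\op B(\mc F_q(\K))$ and $\Gamma_q(\HH)\hookrightarrow\op B(\mc F_q(\HH))$ together with their commutants, which is closer to what \cite{1605.06034} actually does.
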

\begin{rem}
The analogous result in the free case (i.e. $q=0$) was crucial in the proof of the w$^\ast$-complete metric approximation property for the free Araki-Woods factors in \cite{MR2822210}. 
\end{rem}

\subsection{Wick formula and Nou's non-commutative Khintchine inequality}\label{qAraki-Wick}
To fulfill the purpose of this paper, that is to prove the w$^\ast$-complete metric approximation property for the $q$-Araki-Woods algebras, we need to expand our knowledge of the Wick words. Let us start with the celebrated Wick formula.  The proof of the following result can be found in \cite[Proposition 2.7]{MR1463036} in the tracial case.  The general case follows along the same lines.  See also \cite[Lemma 3.2]{MR2822210}.
\begin{prop}[Wick formula]\label{prop:Wick}
Suppose that $e_1,\dots, e_n \in \HH_{\C}$. Then
\begin{equation}\label{form:Wickformula}
W(e_1\odot\dots\odot e_n) = \sum_{k=0}^{n} \sum_{i_1,\dots,i_{n-k},j_{n-k+1},\dots,j_n} a_q^{\ast}(e_{i_1})\dots a_q^{\ast}(e_{i_{n-k}})a_q(I e_{j_{n-k+1}})\dots a_q(I e_{j_n}) q^{i(I_1,I_2)},
\end{equation}
where $I_1 = \{i_1<\dots< i_{n-k}\}$ and $I_2 = \{j_{n-k+1}<\dots<j_n\}$ form a partition of the set $[n]$ and $i(I_1,I_2) = \sum_{l=1}^{n-k} (i_{l}-l)$ is the number of inversion of the permutation defined by $I_1$ and $I_2$. In particular, we have $W(e) = s_q(e)$ for any $e \in \HH_\R$.
\end{prop}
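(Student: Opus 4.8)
The plan is to prove the Wick formula by induction on $n$. For the base case $n=1$, the claimed identity reads $W(e_1) = a_q^*(e_1) + a_q(Ie_1)$; since $a_q(Ie) = a_q^*(e)^*$ pairs with the observation that for $e \in \HH_\R$ one has $Ie = e$, this recovers $W(e) = s_q(e)$, consistent with the definition of the Wick word as the unique element of $\Gamma_q(\HH)$ sending $\Omega$ to $e$. More precisely, one checks directly that the right-hand side of \eqref{form:Wickformula} with $n=1$, applied to $\Omega$, gives $e_1$, and that it lies in $\Gamma_q(\HH)$ (as a sum of creation/annihilation operators built from $\HH_\R$-vectors after complexification — here one uses that $W$ is defined on all of $\bigoplus_n \HH_\C^{\odot n}$ and extends multilinearly).

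For the inductive step, the key identity is the recursion relating $W(e_0 \odot e_1 \odot \dots \odot e_n)$ to $W(e_1 \odot \dots \odot e_n)$. The natural move is to compute $a_q^*(e_0) W(e_1 \odot \dots \odot e_n)$ and recognize that it overshoots: since $a_q^*(e_0)$ creates a particle but $W(e_0 \odot \dots \odot e_n)$ as an operator must also have the correct annihilation terms, one expects a correction of the form
\[
W(e_0 \odot e_1 \odot \dots \odot e_n) = a_q^*(e_0) W(e_1 \odot \dots \odot e_n) - \sum_{r=1}^{n} q^{r-1} \langle Ie_0 \mid e_r \rangle_U \, W(e_1 \odot \dots \odot \widehat{e_r} \odot \dots \odot e_n),
\]
or an equivalent version obtained by instead multiplying on the right and pulling out an annihilation operator. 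One verifies this recursion by applying both sides to $\Omega$ (using $W(\xi)\Omega = \xi$ and the action of $a_q^*$, together with the commutation relation $a_q(\xi) a_q^*(\eta) = q\, a_q^*(\eta) a_q(\xi) + \langle \xi \mid \eta \rangle$), and by checking both sides lie in $\Gamma_q(\HH)$ — the left side by definition, the right side because $a_q^*(e_0)$ alone is not in $\Gamma_q(\HH)$ but the combination with the correction term is (this is really the content of the Wick calculus). Substituting the inductive hypothesis for $W(e_1 \odot \dots \odot e_n)$ into this recursion and carefully bookkeeping how a new index $i_1 = 0$ gets prepended to each $I_1$ — with the inversion count $i(I_1, I_2)$ shifting appropriately, and the correction terms accounting exactly for the cases where $e_0$ would instead be ``annihilated'' against some $e_r$ — yields the formula for $n+1$ indices.

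The main obstacle I expect is the combinatorial bookkeeping in the inductive step: one must match the power of $q$ in the correction term, $q^{r-1}$, against the change $i(I_1 \cup \{0\}, I_2) - i(I_1, I_2')$ in the inversion statistic when the index $0$ is moved from the creation block to being contracted with position $r$, and verify that summing over all placements reproduces exactly the sum over partitions $[n+1] = I_1 \sqcup I_2$. A clean way to organize this is to prove the recursion abstractly first (purely from $W(\xi)\Omega = \xi$, faithfulness of $\chi$, and the $q$-commutation relations, so that no combinatorics is needed to establish it), and only then feed in the explicit formula. The final sentence, $W(e) = s_q(e)$ for $e \in \HH_\R$, is immediate from the $n=1$ case together with $Ie = e$ on $\HH_\R$, so no separate argument is required.
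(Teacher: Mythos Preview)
The paper does not actually supply a proof here; it cites \cite[Proposition 2.7]{MR1463036} for the tracial case and notes that the general case follows along the same lines. Your inductive strategy is the standard one.

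There is, however, a genuine error in the recursion you wrote down. Apply both sides of
\[
W(e_0 \odot e_1 \odot \dots \odot e_n) = a_q^*(e_0)\, W(e_1 \odot \dots \odot e_n) - \sum_{r=1}^{n} q^{r-1} \langle Ie_0 \mid e_r \rangle_U \, W(e_1 \odot \dots \odot \widehat{e_r} \odot \dots \odot e_n)
\]
to $\Omega$: the left side gives $e_0 \odot \dots \odot e_n$, while the right side gives $e_0 \odot \dots \odot e_n$ \emph{minus} the (generally nonzero) correction sum, so the identity is false as stated. The correct recursion --- which the paper itself records later, in the proof of Theorem \ref{Th:Wickultraproduct}, as a \emph{consequence} of the Wick formula --- replaces $a_q^*(e_0)$ by the full generator $W(e_0) = a_q^*(e_0) + a_q(Ie_0)$:
\[
W(e_0 \odot \dots \odot e_n) = W(e_0)\, W(e_1 \odot \dots \odot e_n) - \sum_{r=1}^{n} q^{r-1} \langle Ie_0 \mid e_r \rangle_U \, W(e_1 \odot \dots \odot \widehat{e_r} \odot \dots \odot e_n).
\]
Now both sides manifestly lie in $\Gamma_q(\HH)$, and they agree on $\Omega$ because the extra $a_q(Ie_0)$ acting on $e_1 \odot \dots \odot e_n$ produces exactly the correction sum. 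Since $\Omega$ is separating, the recursion holds. Substituting the explicit expansion \eqref{form:Wickformula} into this corrected recursion and commuting $a_q(Ie_0)$ leftward through the creation operators via the $q$-relations then completes the induction. Your instinct about the inversion bookkeeping is right; only the form of the recursion needed fixing.
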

We will be concerned with the subspaces $\Gamma_{q}^{n}(\HH)$ of $\Gamma_{q}(\HH)$ spanned by the sets $\{W(\xi): \xi \in \HH_{\C}^{\odot n}\}$; elements of these subspaces will be called {\textbf Wick words of length $n$}.  We will also denote by $\widetilde{\Gamma}_q(\HH) \subseteq \Gamma_{q}(\HH)$ the (non-closed) linear span of $\big(\Gamma_{q}^{n}(\HH)\big)_{n \in \N_0}$.  Note that $\widetilde{\Gamma}_q(\HH)$ is a w$^\ast$-dense $\ast$-subalgebra of $\Gamma_q(\HH)$, called the {\textbf algebra of Wick words}. Note that if $\xi=e_1\odot\dots\odot e_n$, where $e_1,\dots,e_n \in \HH_{\R}$ then $W(\xi) - s_q(e_1)\dots s_q(e_n)$ is a sum of Wick words of length strictly smaller than $n$, so inductively one can show that $\widetilde{\Gamma}_q(\HH)$ is the same as the $\ast$-algebra generated by $\{s_q(\xi):\xi \in \HH_{\R}\}$. Let now $(e_i)_{i \in I}$ be a fixed orthonormal basis for $\HH_\R$.  Then the algebra of Wick words $\widetilde{\Gamma}_q(\HH)$ is $\ast$-isomorphic to the $\ast$-algebra of non-commutative polynomials $\C \langle (X_i)_{i \in I} \ | \ X_i=X_i^*\rangle$. The isomorphism in this case is given by $X_i \mapsto s_q(e_i) = W(e_i)$.  See \cite[Remark after Lemma 3.2]{MR2200739} for details. At times we will also need to consider the C$^\ast$-completion $\mc A_q(\HH)$ of $\widetilde{\Gamma}_q(\HH)$.  The most important part of the proof of the main theorem is providing an estimate (which must grow at most polynomially in $n$) for the cb norm of the projection from $\widetilde{\Gamma}_q(\HH)$ onto $\Gamma_{q}^{n}(\HH)$.   Therefore we need to understand the operator space structure of these spaces. This will be acccomplished by reformulating the Wick formula so that it is more amenable to operator space theoretic techniques, following Nou's lead (cf. \cite{MR2091676}). We first define some relevant maps.
\begin{defn}
Let $\HH$ be a complex Hilbert space coming from a pair $(\HH_{\R}, (U_{t})_{t \in \R})$. We define maps $\mathcal{I}$, $U$ and $S$ on the algebraic direct sum $\bigoplus_{n \geqslant 0} \HH_{\C}^{\odot n}$ by
\begin{enumerate}[{\normalfont (i)}]
\item $\mathcal{I}(e_{1}\odot\dots\odot e_{n}):=Ie_{1}\odot \dots\odot Ie_{n}$;
\item $U(e_{1}\odot \dots \odot e_{n}) = e_{n} \odot \dots\odot e_{1}$;
\item $S = \mathcal{I}U$.
\end{enumerate}
The antilinear map $\mathcal{I}$ is a natural extension of the complex conjugation on $\HH_{\C}$, thereby it should be really viewed as a closed linear operator from $\mathcal{F}_{q}(\HH)$ to $\mathcal{F}_{q}(\CH)$ mapping $e_{1}\otimes\dots\otimes e_{n}$ to $\overline{I e_{1}} \otimes \dots \otimes \overline{Ie_{n}}$. The flip map $U$ actually extends to a unitary on $\mathcal{F}_{q}(\HH)$. The last map, $S$, is a conjugation relevant to the Tomita-Takesaki theory. For future reference, let us point out that the {\textbf modular automorphism group} $(\sigma_t)_{t \in \R}$ associated to the $q$-quasi-free state $\chi$ was computed in \cite{MR1444786, MR2018229}, and is given by
\[
\sigma_t(s_q(\xi)) = s_q(U_{-t}\xi) = s_q(A^{-it}\xi) \qquad (\xi \in \HH_\C). 
\]  
\end{defn}
We still need two more maps for our reformulation of the Wick formula.
\begin{defn}
Fix $k \in \N_0$ and $n \in \N$ such that $0 \le k \leqslant n$. We define the map $R_{n,k}^{\ast}\from \HH_{q}^{\otimes n} \to \HH_{q}^{\otimes (n-k)} \otimes_h \HH_{q}^{\otimes k}$ by specifying its values on a dense subspace:
\[
R_{n,k}^{\ast}(e_{1}\odot \dots \odot e_{n}):= \sum_{i_1,\dots,i_{n-k},j_{n-k+1},\dots,j_n} q^{i(I_1, I_{2})} (e_{i_{1}}\odot \dots \odot e_{i_{n-k}}) \otimes_{h} (e_{j_{n-k+1}}\odot\dots\odot e_{j_{n}}).
\]
We also define $U_{n,k}\from \left(\HH_{q}^{\otimes (n-k)}\right)_{c} \otimes_{h} \left(\CH_{q}^{\otimes k}\right)_{r} \to \op{B}(\mathcal{F}_{q}(\HH))$ by
\[
U_{n,k}((e_{1}\odot \dots\odot e_{n-k})\otimes_{h}(\overline{e_{n-k+1}}\odot\dots\odot \overline{e_{n}})):= a_{q}^{\ast}(e_{1})\dots a_{q}^{\ast}(e_{n-k}) a_{q}(e_{n-k+1})\dots a_q(e_{n}).
\]
\end{defn}
\begin{rem}
Nou proved (cf. \cite[Lemma 4 and Corollary 1]{MR2091676}) that $\|U_{n,k}\|_{cb} \leqslant C(q)$, where $C(q) = \prod_{n=1}^\infty (1-q^n)^{-1}.$  This constant $C(q)$ will appear throughout the paper.
\end{rem}

We are now ready to state the reformulated Wick formula and the corresponding Khintchine inequality.

\begin{prop}
For any $\xi \in \HH_{\C}^{\odot n}$ we have $W(\xi) = \sum_{k=0}^{n} U_{n,k} (\mathds{1}_{n-k} \odot \mathcal{I}) R_{n,k}^{\ast}(\xi) $, where $\mathds{1}_{n-k}$ is the identity map on $\HH_{\C}^{\odot (n-k)}$.
\end{prop}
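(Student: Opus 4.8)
The plan is to deduce this reformulation directly from the Wick formula of Proposition~\ref{prop:Wick} by unwinding the definitions of the three maps $R_{n,k}^{\ast}$, $\mathcal{I}$ and $U_{n,k}$. By linearity of all the maps involved it suffices to verify the asserted identity on a simple tensor $\xi = e_1 \odot \cdots \odot e_n$ with $e_1, \ldots, e_n \in \HH_{\C}$, since such tensors span $\HH_{\C}^{\odot n}$.

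Fix $k$ with $0 \le k \le n$. First I would apply $R_{n,k}^{\ast}$ to $e_1 \odot \cdots \odot e_n$, which by definition produces $\sum q^{i(I_1,I_2)}\, (e_{i_1} \odot \cdots \odot e_{i_{n-k}}) \otimes_h (e_{j_{n-k+1}} \odot \cdots \odot e_{j_n})$, the sum being over all ways of splitting $[n]$ into $I_1 = \{i_1 < \cdots < i_{n-k}\}$ and $I_2 = \{j_{n-k+1} < \cdots < j_n\}$, with exactly the weight $i(I_1,I_2) = \sum_{l=1}^{n-k}(i_l - l)$ appearing in \eqref{form:Wickformula}. Next, applying $\mathds{1}_{n-k} \odot \mathcal{I}$ leaves the left leg untouched and sends the right leg $e_{j_{n-k+1}} \odot \cdots \odot e_{j_n}$ to $\overline{I e_{j_{n-k+1}}} \odot \cdots \odot \overline{I e_{j_n}}$; the antilinearity of $\mathcal{I}$ and the passage from $\HH_q^{\otimes k}$ to $\CH_q^{\otimes k}$ are precisely what is needed for the result to lie in the domain of $U_{n,k}$. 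Finally, applying $U_{n,k}$ to each summand yields $a_q^{\ast}(e_{i_1}) \cdots a_q^{\ast}(e_{i_{n-k}}) a_q(I e_{j_{n-k+1}}) \cdots a_q(I e_{j_n})$, so that
\begin{align*}
&U_{n,k}(\mathds{1}_{n-k} \odot \mathcal{I}) R_{n,k}^{\ast}(e_1 \odot \cdots \odot e_n) \\
&\qquad = \sum_{I_1,I_2} q^{i(I_1,I_2)}\, a_q^{\ast}(e_{i_1}) \cdots a_q^{\ast}(e_{i_{n-k}})\, a_q(I e_{j_{n-k+1}}) \cdots a_q(I e_{j_n}),
\end{align*}
which is exactly the inner summand indexed by $k$ in the Wick formula \eqref{form:Wickformula}. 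Summing over $k = 0, \ldots, n$ and comparing with Proposition~\ref{prop:Wick} gives $W(e_1 \odot \cdots \odot e_n) = \sum_{k=0}^n U_{n,k}(\mathds{1}_{n-k} \odot \mathcal{I}) R_{n,k}^{\ast}(e_1 \odot \cdots \odot e_n)$, and hence the claim.

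I do not expect any serious obstacle here: the argument is essentially bookkeeping. The one point that requires a little care is checking that the conjugations match up cleanly -- that the operator $I$ built into the annihilation operators in the definition of $U_{n,k}$ (through the identification of $\CH_q^{\otimes k}$ with the conjugate space) together with the $\mathcal{I}$ acting on the right leg reproduce exactly the operators $a_q(I e_{j})$ of \eqref{form:Wickformula}, with no stray factors or misplaced conjugations. It is also worth emphasizing that, although $R_{n,k}^{\ast}$ and $U_{n,k}$ were introduced as (completely bounded) maps between completions of Haagerup tensor products, the identity being proved is purely algebraic, living on $\bigoplus_{n \ge 0} \HH_{\C}^{\odot n}$; the operator space refinements of these maps are not needed until later in the paper.
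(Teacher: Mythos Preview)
Your proposal is correct and is exactly the natural argument: the paper in fact states this proposition without proof, precisely because it is the bookkeeping exercise you describe---unwinding the definitions of $R_{n,k}^{\ast}$, $\mathcal{I}$ and $U_{n,k}$ and matching the result against the Wick formula \eqref{form:Wickformula}. Your care with the conjugations is well placed, and your remark that the identity is purely algebraic (with the operator-space structure only entering later, in Corollary~\ref{cor:NCKhintchine}) is apt.
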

\begin{cor}[{\cite[Theorem 3]{MR2091676}}]\label{cor:NCKhintchine}
Let $\K$ be a Hilbert space. If $\xi \in \op{B}(\K) \odot \HH_{\C}^{\odot n}$ then
\begin{align}
\max_{0\leqslant k \leqslant n} \|\left(\op{Id}\odot (\mathds{1}_{n-k} \odot \mathcal{I}) R_{n,k}^{\ast}\right)(\xi)\| &\leqslant \|(\op{Id}\odot W)(\xi)\| \label{minorization} \\ 
\|(\op{Id}\odot W)(\xi)\|&\leqslant C(q)(n+1) \max_{0\leqslant k \leqslant n} \|\left(\op{Id}\odot (\mathds{1}_{n-k} \odot \mathcal{I}) R_{n,k}^{\ast}\right)(\xi)\| \label{majorization}.
\end{align}
The norm $\|(\op{Id}\odot W)(\xi)\|$ is computed in $\op{B}(\K) \otimes_{\op{min}} \Gamma_{q}(\HH)$, and the other norms are computed in $\op{B}(\K)\otimes_{\op{min}} \left(\HH_{q}^{\otimes (n-k)}\right)_{c} \otimes_{h} \left(\CH_{q}^{\otimes k}\right)_{r}$.
\end{cor}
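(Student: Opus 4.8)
The plan is to derive both \eqref{minorization} and \eqref{majorization} from the reformulated Wick formula $W=\sum_{k=0}^{n}U_{n,k}(\mathds{1}_{n-k}\odot\mathcal{I})R_{n,k}^{\ast}$ of the preceding proposition, combined with Nou's bound $\|U_{n,k}\|_{cb}\leqslant C(q)$ and the complete isometry $\HH_{c}\otimes_{h}\CK_{r}\simeq\op{K}(\K,\HH)$ recalled above. The two inequalities are of opposite character: \eqref{majorization} is a soft triangle-inequality argument, while \eqref{minorization} amounts to extracting, for each $k$, the single summand $U_{n,k}(\mathds{1}_{n-k}\odot\mathcal{I})R_{n,k}^{\ast}(\xi)$ out of $W(\xi)$ by one completely contractive map.

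For \eqref{majorization} I would apply $\op{Id}\odot(\cdot)$ to the Wick reformulation and use the triangle inequality in $\op{B}(\K)\otimes_{\op{min}}\op{B}(\mathcal{F}_{q}(\HH))$, into which $\op{B}(\K)\otimes_{\op{min}}\Gamma_{q}(\HH)$ embeds isometrically. Since $\op{Id}\odot U_{n,k}$ has norm at most $\|U_{n,k}\|_{cb}\leqslant C(q)$ from $\op{B}(\K)\otimes_{\op{min}}\big(\HH_{q}^{\otimes(n-k)}\big)_{c}\otimes_{h}\big(\CH_{q}^{\otimes k}\big)_{r}$ into $\op{B}(\K)\otimes_{\op{min}}\op{B}(\mathcal{F}_{q}(\HH))$, each of the $n+1$ summands is bounded by $C(q)\max_{0\leqslant k\leqslant n}\|(\op{Id}\odot(\mathds{1}_{n-k}\odot\mathcal{I})R_{n,k}^{\ast})(\xi)\|$, which is precisely \eqref{majorization}.

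For \eqref{minorization} I would fix $0\leqslant k\leqslant n$, write $P_{m}$ for the orthogonal projection of $\mathcal{F}_{q}(\HH)$ onto $\HH_{q}^{\otimes m}$, and regard the compression $\Phi_{k}\colon x\mapsto P_{n-k}xP_{k}$ as a completely contractive map $\op{B}(\mathcal{F}_{q}(\HH))\to\op{B}(\HH_{q}^{\otimes k},\HH_{q}^{\otimes(n-k)})$. Feeding $(\op{Id}\odot W)(\xi)$ into $\op{Id}\odot\Phi_{k}$ and expanding $W$, the key observation is that a word built from $n-k'$ creation and $k'$ annihilation operators changes the Fock degree by $n-2k'$ (and annihilates $\HH_{q}^{\otimes k}$ when $k'>k$), so only the $k'=k$ summand survives the compression, and for it one has $P_{n-k}U_{n,k}(\cdots)P_{k}=U_{n,k}(\cdots)P_{k}$; hence $(\op{Id}\odot\Phi_{k})\big((\op{Id}\odot W)(\xi)\big)$ equals the restriction of $\big(\op{Id}\odot U_{n,k}(\mathds{1}_{n-k}\odot\mathcal{I})R_{n,k}^{\ast}\big)(\xi)$ to $\op{B}(\K)\odot\HH_{q}^{\otimes k}$. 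The final step is to identify this restriction with $(\op{Id}\odot(\mathds{1}_{n-k}\odot\mathcal{I})R_{n,k}^{\ast})(\xi)$ through the complete isometry $\big(\HH_{q}^{\otimes(n-k)}\big)_{c}\otimes_{h}\big(\CH_{q}^{\otimes k}\big)_{r}\simeq\op{K}(\HH_{q}^{\otimes k},\HH_{q}^{\otimes(n-k)})$: since $a_{q}(h_{1})\cdots a_{q}(h_{k})$ is the adjoint of $a_{q}^{\ast}(h_{k})\cdots a_{q}^{\ast}(h_{1})$, which carries $\Omega$ to $h_{k}\odot\cdots\odot h_{1}$, the word $a_{q}^{\ast}(f_{1})\cdots a_{q}^{\ast}(f_{n-k})a_{q}(h_{1})\cdots a_{q}(h_{k})$ restricted to $\HH_{q}^{\otimes k}$ is exactly the rank-one operator $\ket{f_{1}\odot\cdots\odot f_{n-k}}\bra{h_{k}\odot\cdots\odot h_{1}}$. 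Comparing this with the assignment $\zeta\odot\eta\mapsto\ket{\zeta}\bra{\eta}$ defining the isometry, one sees that the restriction is the image of $(\mathds{1}_{n-k}\odot\mathcal{I})R_{n,k}^{\ast}(\xi)$ under it, up to composition with the flip $U$ on $\HH_{q}^{\otimes k}$ (which reverses the order of the tensor factors); as $U$ is a unitary this leaves completely bounded norms unchanged, and the same remains true after amplifying by $\op{B}(\K)$, all tensor products being minimal and all maps involved completely contractive or completely isometric. This yields $\|(\op{Id}\odot(\mathds{1}_{n-k}\odot\mathcal{I})R_{n,k}^{\ast})(\xi)\|\leqslant\|(\op{Id}\odot W)(\xi)\|$, and taking the maximum over $k$ gives \eqref{minorization}.

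The routine part is \eqref{majorization}. The genuine obstacle lies in the last identification in \eqref{minorization}: the Fock-degree bookkeeping that isolates the single surviving summand, the adjointness computation of how the $q$-annihilators act on vectors of matching length, and the reconciliation of the various orderings, conjugations and inversion weights built into $\mathcal{I}$, $U$ and $R_{n,k}^{\ast}$ with the rank-one-operator identification. Moreover, all of this must be carried out with \emph{completely} isometric and contractive maps throughout, since the statement concerns the $\op{B}(\K)$-amplified norms rather than mere operator norms.
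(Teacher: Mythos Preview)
Your proposal is correct and follows essentially the same route the paper indicates: \eqref{majorization} via the Wick decomposition, the bound $\|U_{n,k}\|_{cb}\le C(q)$ and the triangle inequality, and \eqref{minorization} by compressing $W$ between Fock-degree projections and invoking the complete isometry $\HH_c\otimes_h\CK_r\simeq\op{K}(\K,\HH)$, which is precisely Nou's argument that the paper cites without detail. Your handling of the flip $U$ (a unitary on $\HH_q^{\otimes k}$, as the paper records) to reconcile the tensor orderings is the right way to close the identification.
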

\begin{proof}
Inequality \eqref{majorization} follows from the Wick formula, complete boundedness of $U_{n,k}$ and the triangle inequality, as in the proof of Theorem $1$ in \cite{MR2091676}. The proof of \eqref{minorization} is also a repetition of the argument in Nou's paper.
\end{proof}

\subsection{Radial multipliers on \texorpdfstring{$q$}{}-Araki-Woods algebras}

In this paper, we will be primarily interested in a special class of completely bounded linear maps on $q$-Araki-Woods algebras, called radial multipliers.  In the following, we fix an arbitrary $q$-Araki-Woods algebra $\Gamma_q(\HH)$. 

\begin{defn}
Let $\varphi: \N_0 \to \C$ be a bounded function.  The (w$^\ast$-densely defined) linear map $\mathsf{m}_{\varphi}: \widetilde{\Gamma}_q(\HH) \to \widetilde{\Gamma}_q(\HH)$ given by \[\mathsf{m}_{\varphi} (W(\xi)) = \varphi(m) W(\xi) \qquad (\xi \in (\HH_\C)^{\odot m})
\]
is called the {\textbf radial multiplier} with symbol $\varphi$.  If $\mathsf{m}_{\varphi}$ extends to a completely bounded map $\mathsf{m}_{\varphi}:\mc A_q(\HH) \to \mc A_q(\HH)$, we call $\mathsf{m}_{\varphi}$ a {\textbf completely bounded radial multiplier} on $\Gamma_q(\HH)$.
\end{defn}

\begin{rem}
It is an easy exercise to see that completely bounded radial multipliers $\mathsf{m}_\varphi$ automatically extend (uniquely) to normal maps on $\Gamma_q(\HH) =\mc A_q(\HH)''$ with the same cb norm (cf. \cite[Lemma 3.4]{MR2822210}). It comes from the fact that one can work on the level of the predual of $\Gamma_q(\HH)$ and there the subspace generated by $\mc A_q(\HH)$ is norm-dense.
\end{rem}

In the course of the proof of the complete metric approximation property for $q$-Araki-Woods algebras we will need the following result obtained by the first-named author.
\begin{thm}[{\cite[Proposition 3.3 and the remark following it]{1110.4918}}]\label{Th:Polynomial bound}
Let $\mathsf{H}_{\R}$ be a real Hilbert space and let $\Gamma_q(\mathsf{H}_{\R})$ be the $q$-Gaussian algebra associated with it. Fix $n \in \N$ and and consider the radial multiplier $\mathsf{m}_{\varphi}$ associated to the Kronecker delta symbol $\varphi_n(k) = \delta_n(k) = \delta_{n,k}$. Then $\mathsf{m}_{\varphi}$ is a cb radial multiplier and corresponds to the projection $P_n$ of $\Gamma_q(\mathsf{H}_{\R})$ onto the ultraweakly closed span of $\{W(\xi): \xi \in \mathsf{H}^{\odot n}\}$.  Moreover, we have $\|\mathsf{m}_{\varphi_n}\|_{cb} \leqslant C(q)^2 (n+1)^2$.
\end{thm}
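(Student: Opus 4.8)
The plan is to recognise the multiplier in question as the projection onto the homogeneous Wick words of length $n$, and then to control its completely bounded norm by two successive applications of Nou's non-commutative Khintchine inequality (Corollary~\ref{cor:NCKhintchine}).

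\textbf{Step 1 (identification).} By its very definition $\mathsf{m}_{\varphi_n}$ acts as the identity on $\Gamma_{q}^{n}(\HH)$ and annihilates every $\Gamma_{q}^{m}(\HH)$ with $m\ne n$; since $\widetilde{\Gamma}_q(\HH)=\bigoplus_{m}\Gamma_{q}^{m}(\HH)$ is an algebraic direct sum, $\mathsf{m}_{\varphi_n}$ is just the algebraic projection onto $\Gamma_{q}^{n}(\HH)$. Once it is shown to be completely bounded, the remark preceding the theorem shows that it extends (with the same cb-norm) to the normal projection $P_n$ of $\Gamma_q(\HH)$ onto the $w^{\ast}$-closed span of $\{W(\xi):\xi\in\HH^{\odot n}\}$, so everything reduces to the cb-norm estimate. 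Concretely, I would fix a Hilbert space $\K$ and $x\in\op{B}(\K)\odot\widetilde{\Gamma}_q(\HH)$, write the finite sum $x=\sum_{m\ge 0}(\op{Id}\odot W)(\xi_m)$ with $\xi_m\in\op{B}(\K)\odot\HH_{\C}^{\odot m}$, observe that $(\op{Id}\odot\mathsf{m}_{\varphi_n})(x)=(\op{Id}\odot W)(\xi_n)$, and aim to bound $\|(\op{Id}\odot W)(\xi_n)\|$ by $C(q)^2(n+1)^2\|x\|$.

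\textbf{Step 2 (first Khintchine estimate).} The majorization inequality~\eqref{majorization} applied to $\xi_n$ immediately gives
\[
\big\|(\op{Id}\odot\mathsf{m}_{\varphi_n})(x)\big\|\;\le\;C(q)(n+1)\,\max_{0\le k\le n}\big\|\big(\op{Id}\odot(\mathds{1}_{n-k}\odot\mathcal{I})R_{n,k}^{\ast}\big)(\xi_n)\big\|.
\]
So the task becomes to estimate the right-hand side by $C(q)(n+1)\,\|x\|$.

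\textbf{Step 3 (the crux).} Using the identification $(\HH_{q}^{\otimes(n-k)})_{c}\otimes_{h}(\CH_{q}^{\otimes k})_{r}\simeq\op{K}(\HH_{q}^{\otimes k},\HH_{q}^{\otimes(n-k)})$ together with the definitions of $U_{n,k}$, $R_{n,k}^{\ast}$ and the $q$-commutation relations, one checks that, up to the isometric flip $U$ on the annihilation leg, the operator $(\mathds{1}_{n-k}\odot\mathcal{I})R_{n,k}^{\ast}(\xi_n)$ is exactly the corner $P^{(n-k)}\,(\op{Id}\odot W)(\xi_n)\,\iota^{(k)}$ of $(\op{Id}\odot W)(\xi_n)$ acting on $\mathcal{F}_q(\HH)$, where $P^{(j)}$ is the orthogonal projection onto $\HH_{q}^{\otimes j}$ and $\iota^{(k)}$ the inclusion of $\HH_{q}^{\otimes k}$; in particular the norms agree. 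Now $y\mapsto(\op{Id}\otimes P^{(n-k)})\,y\,(\op{Id}\otimes\iota^{(k)})$ is a complete contraction from $\op{B}(\K)\otimes_{\op{min}}\Gamma_q(\HH)$ into $\op{B}(\K\otimes\HH_{q}^{\otimes k},\,\K\otimes\HH_{q}^{\otimes(n-k)})$, so applying it to $x$ itself already produces something of norm $\le\|x\|$; but this corner of $x$ differs from the corner of $(\op{Id}\odot W)(\xi_n)$ precisely by the corresponding corners of the lower Wick words $(\op{Id}\odot W)(\xi_{n-2}),(\op{Id}\odot W)(\xi_{n-4}),\dots$, each of which ``factors through'' a Fock layer $\HH_{q}^{\otimes l}$ with $l\ge 1$. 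Bounding the total contribution of these contamination terms — by a Hankel-matrix / creation–annihilation-word analysis of the $q$-deformed Wick calculus, which is the content of \cite[Proposition~3.3]{1110.4918} — costs one further factor of $C(q)(n+1)$, and yields
\[
\max_{0\le k\le n}\big\|\big(\op{Id}\odot(\mathds{1}_{n-k}\odot\mathcal{I})R_{n,k}^{\ast}\big)(\xi_n)\big\|\;\le\;C(q)(n+1)\,\|x\|.
\]
Combining this with Step 2 gives $\|(\op{Id}\odot\mathsf{m}_{\varphi_n})(x)\|\le C(q)^2(n+1)^2\|x\|$, and taking the supremum over $\K$ and over $x$ in the unit ball of $\op{B}(\K)\odot\widetilde{\Gamma}_q(\HH)$ shows that $\mathsf{m}_{\varphi_n}$ extends to $\mc A_q(\HH)$ with $\|\mathsf{m}_{\varphi_n}\|_{cb}\le C(q)^2(n+1)^2$.

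The main obstacle is Step 3. Neither half of the Khintchine inequality alone suffices: the minorization~\eqref{minorization} only applies to homogeneous elements, and using it on $(\op{Id}\odot W)(\xi_n)$ would require exactly the inequality $\|(\op{Id}\odot W)(\xi_n)\|\le\text{const}\cdot\|x\|$ that we are trying to prove, while one cannot isolate a single length grade inside $\Gamma_q(\HH)$ by any bounded averaging (the Ornstein–Uhlenbeck semigroup or a ``rotation'' action only separates grades with unbounded cost). Breaking this circularity forces one down to the level of individual $q$-creation/annihilation words, to estimate how $W(\xi_n)$ is visible as a corner of the operator $x$ separately from the lower Wick words — and this is exactly where the combinatorics of the $q$-Fock space, hence the constant $C(q)$, genuinely enters.
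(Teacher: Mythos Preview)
The paper does not give a proof of this statement at all: Theorem~\ref{Th:Polynomial bound} is simply quoted from \cite[Proposition~3.3 and the remark following it]{1110.4918} and used as a black box. There is therefore no ``paper's own proof'' to compare against.

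Assessing your proposal on its own merits: Steps~1 and~2 are correct, and the corner identification you sketch at the start of Step~3 is essentially right (the $(n-k,k)$ Fock-space corner of $W(\xi_n)$ does recover $(\mathds{1}_{n-k}\odot\mathcal I)R^*_{n,k}(\xi_n)$ up to the flip, since $a_q(f_1)\cdots a_q(f_k)|_{\HH_q^{\otimes k}\to\C}=\langle f_k\otimes\cdots\otimes f_1|$ in the $q$-inner product). The problem is that Step~3 is not a proof but a citation of the very statement you are asked to prove. You write that bounding the contamination from the lower Wick words ``is the content of \cite[Proposition~3.3]{1110.4918}'' --- but \cite[Proposition~3.3]{1110.4918} \emph{is} Theorem~\ref{Th:Polynomial bound}. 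You even flag this yourself in the final paragraph (``breaking this circularity\ldots''), yet no argument is supplied to break it.

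Concretely, the missing piece is this: for $m<n$ with $n-m$ even, $W(\xi_m)$ contributes to the $P^{(n-k)}(\cdot)\iota^{(k)}$ corner through its $U_{m,j}$ term with $j=k-(n-m)/2$, and you need an honest estimate showing that the sum of these contributions is controlled by $C(q)(n+1)\|x\|$. That estimate is the substance of Avsec's argument in \cite{1110.4918}; without reproducing (or replacing) it, what you have is an outline of why the bound $C(q)^2(n+1)^2$ is plausible, not a proof.
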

\subsection{Ultraproducts of von Neumann algebras}
In this subsection we will mostly follow \cite{MR3198856}.  Ultraproducts of von Neumann algebras are very useful, e.g. in the study of central sequences in connection with property $\Gamma$. The original construction was applicable only in the case of tracial algebras. The main difference in the type III case is that there are two different notions of ultraproducts, each havings its own virtues.

We start with a definition due to Ocneanu \cite{MR807949}, which is closer to the ultraproduct of tracial von Neumann algebras. We fix a sequence $(\mathsf{M}_{n}, \varphi_{n})_{n \in \N}$ of von Neumann algebras equipped with normal faithful states, and a non-principal ultrafilter $\omega$ on $\N$. Recall that if all the states were tracial, the ultraproduct would be defined as the direct product 
\[
\ell^{\infty}(\N, \mathsf{M}_{n}):=\{(x_{n})\in\prod_{n \in \N} \mathsf{M}_{n}: \sup_{n \in \N} \|x_n\| <\infty\}
\]
quotiented by the ideal of $L^{2}$-null sequences, i.e. sequences $(x_{n}) \in \ell^{\infty}(\N, \mathsf{M}_{n})$ such that $\lim_{n \to \omega} \varphi_{n}(x_{n}^{\ast}x_{n})=0$. The problem in the non-tracial case is that this subspace is just a left ideal and there is no reason why we should prefer $\lim_{n \to \omega} \varphi_{n}(x_{n}^{\ast}x_{n})$ to $\lim_{n \to \omega} \varphi(x_{n} x_{n}^{\ast})$. This little nuisance can be taken care of by defining $\|x\|^{\#}_{\varphi}:= \left(\varphi(x^{\ast}x+xx^{\ast})\right)^{\frac{1}{2}}$ and working with the condition $\lim_{n \to \omega} \|x_{n}\|^{\#}_{\varphi_n}=0$ instead. This, unfortunately, gives rise to another problem -- the subspace 
\[\mathsf{I}_{\omega}(\mathsf{M}_{n}, \varphi_{n}):=\{(x_{n}) \in \ell^{\infty}(\N, \mathsf{M}_{n}): \lim_{n\to \omega} \|x_{n}\|^{\#}_{\varphi_{n}}=0\}
\] is still not an ideal. We need to find the largest subalgebra inside $\ell^{\infty}(\N, \mathsf{M}_{n})$ in which $\mathsf{I}_{\omega}(\mathsf{M}_{n}, \varphi_{n})$ is an ideal. This leads us to the next definition.
\begin{defn}
Let $(\mathsf{M}_{n}, \varphi_{n})$ be a sequence of von Neumann algebras equipped with normal faithful states. Define
\[
\mathcal{M}^{\omega}(\mathsf{M}_{n}, \varphi_n):=\{(x_{n})_{n\in \N} \in \ell^{\infty}(\N, \mathsf{M}_{n}): (x_n)\mathsf{I}_{\omega} \subset \mathsf{I}_{\omega}, \ \mathsf{I}_{\omega}(x_n) \subset \mathsf{I}_{\omega}\}.
\]
Then $\mathcal{M}^{\omega}(\mathsf{M}_{n}, \varphi_n)$ is a $C^{\ast}$-algebra in which $\mathsf{I}_{\omega}(\mathsf{M}_{n}, \varphi_{n})$ is a closed ideal. Therefore we can form the quotient 
\[
(\mathsf{M}_n, \varphi_n)^{\omega}:= \mathcal{M}^{\omega}(\mathsf{M}_{n}, \varphi_n) \slash \mathsf{I}_{\omega}(\mathsf{M}_{n}, \varphi_{n})
\]
which is, \emph{a priori}, a $C^{\ast}$-algebra but actually turns out to be a von Neumann algebra (cf. \cite[Proposition on page 32]{MR807949}), called the \textbf{Ocneanu ultraproduct} of the sequence $(\mathsf{M}_{n}, \varphi_{n})_{n \in \N}$. The image of a sequence $(x_{n})_{n \in \N} \in \mathcal{M}^{\omega}(\mathsf{M}_n, \varphi_n)$ in the quotient algebra $(\mathsf{M}_{n},\varphi_n)^{\omega}$ will be denoted by $(x_{n})^{\omega}$.
\end{defn}
\begin{rem}
The \textbf{ultraproduct state} $(\varphi_{n})^{\omega}( (x_{n})^{\omega}):= \lim_{n \to \omega} \varphi_{n}(x_{n})$ is a normal faithful state on $(\mathsf{M}_n, \varphi_n)^{\omega}$.
\end{rem}
Despite being a natural generalisation of the tracial ultraproduct, the Ocneanu ultraproduct suffers from being inadequate for the purpose of non-commutative integration. One particular problem is that the Banach space ultraproduct of preduals is usually bigger than the predual of the Ocneanu ultraproduct.

There is a different construction that, as shown in \cite{MR1926043}, interacts nicely with ultraproducts of non-commutative $L^{p}$-spaces. Once again, we start from a sequence $(\mathsf{M}_{n}, \varphi_{n})_{n \in \N}$ of von Neumann algebras endowed with normal faithful states. Using the GNS construction, we view $\mathsf{M}_{n} \subset \op{B}(\mathsf{H}_{n})$. Let $(\mathsf{M}_{n})_{\omega}$ denote the Banach space ultraproduct of the sequence $(\mathsf{M}_{n})_{n \in \N}$, which is a $C^{\ast}$-algebra. Let $(\mathsf{H}_{n})_{\omega}$ be the ultraproduct of the corresponding GNS Hilbert spaces. Then we can view $(\mathsf{M}_{n})_{\omega}$ as acting on $(\mathsf{H}_{n})_{\omega}$ via
\begin{equation}\label{form:diagrep}
(x_{n})_{\omega}(\xi_n)_{\omega} := (x_n \xi_n)_{\omega}.
\end{equation}
It is not hard to see that this is well defined (by the joint continuity of the map $\op{B}(\mathsf{H}) \times \mathsf{H} \ni (x,\xi) \mapsto x\xi \in \mathsf{H}$). 
\begin{defn}
Let $(\mathsf{M}_n, \varphi_n)_{n \in \N}$ be a sequence of von Neumann algebras equipped with normal faithful states, represented faithfully on the GNS Hilbert spaces, i.e. $\mathsf{M}_{n} \subset \op{B}(\mathsf{H}_n)$. The \textbf{Raynaud ultraproduct} is defined as the weak closure inside $\op{B}((H_n)_{\omega})$ of the image of the natural diagonal representation \eqref{form:diagrep} of the $C^{\ast}$-ultraproduct $(\mathsf{M}_{n})_{\omega}$ on $(\mathsf{H}_n)_{\omega}$; it is denoted by $\prod^{\omega}(\mathsf{M}_n, \varphi_n)$.  
\end{defn}
\begin{rem}
The ultraproduct state on the Raynaud ultraproduct, which is a vector state induced by the ultraproduct of the cyclic vectors for the GNS representations of algebras $\mathsf{M}_n$, denoted by $(\varphi_n)_{\omega}$, is generally not faithful.
\end{rem}
There is a nice relationship between the two constructions which is summarised in the following theorem.
\begin{thm}[{\cite[Theorem 3.7]{MR3198856}}]\label{thm:HaagerupAndo}
Let $(\mathsf{M}_n, \varphi_n)_{n \in \N}$ be a sequence of von Neumann algebras equipped with normal faithful states. Let $\mathsf{H}_n:=L^{2}(\mathsf{M}_n, \varphi_n)$ be the GNS-Hilbert space associated with the state $\varphi_n$ on $\mathsf{M}_n$, so we have $\prod^{\omega}(\mathsf{M}_n, \varphi_n) \subset \op{B}((\mathsf{H}_n)_{\omega})$. Let $\mathsf{M}^{\omega} := (\mathsf{M}_n, \varphi_n)^{\omega}$ and $\varphi^{\omega}  := (\varphi_n)^{\omega}$. Define a map $w\from L^{2}(\mathsf{M}^{\omega},\varphi^{\omega}) \hookrightarrow (\mathsf{H}_n)_{\omega}$ from the GNS-Hilbert space of $(\mathsf{M}^{\omega}, \varphi^{\omega})$ given by
\[
w\left( (x_n)^{\omega}(\xi_{\varphi^{\omega}})\right) := (x_n \xi_{\varphi_n})_{\omega},
\]
where $\xi$ (with an appropriate subscript) is the cyclic vector coming from the GNS construction. Then $w$ is an isometry and $w^{\ast} \left(\prod^{\omega}(\mathsf{M}_n, \varphi_n)\right)w = \mathsf{M}^{\omega}$.
\end{thm}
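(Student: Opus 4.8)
The plan is to realise $w$ as an isometry onto a reducing subspace of the Groh--Raynaud ultraproduct $\mathcal R:=\prod^\omega(\mathsf M_n,\varphi_n)$, and to identify the corresponding reduction of $\mathcal R$ with $\mathsf M^\omega$ using Takesaki's theorem on conditional expectations. Write $\xi^\omega:=(\xi_{\varphi_n})_\omega\in(\mathsf H_n)_\omega$. Since $\varphi^\omega$ is faithful, $\xi_{\varphi^\omega}$ is cyclic and $\mathsf M^\omega\xi_{\varphi^\omega}$ is dense in $L^2(\mathsf M^\omega,\varphi^\omega)$, so it suffices to define $w$ there. For $(x_n)\in\mathcal M^\omega(\mathsf M_n,\varphi_n)$ one has
\[
\big\|(x_n)^\omega\xi_{\varphi^\omega}\big\|^2=\varphi^\omega\!\big((x_n^\ast x_n)^\omega\big)=\lim_{n\to\omega}\varphi_n(x_n^\ast x_n)=\lim_{n\to\omega}\|x_n\xi_{\varphi_n}\|^2=\big\|(x_n\xi_{\varphi_n})_\omega\big\|^2,
\]
so $(x_n)^\omega\xi_{\varphi^\omega}\mapsto(x_n\xi_{\varphi_n})_\omega$ is a well-defined linear isometry on a dense subspace and extends to the asserted isometry $w$. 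Evaluating on elementary vectors gives the intertwining identity $w\,\lambda^\omega\!\big((x_n)^\omega\big)=(x_n)_\omega\,w$ for $(x_n)\in\mathcal M^\omega$, where $\lambda^\omega$ is the standard (GNS) representation of $\mathsf M^\omega$ and $(x_n)_\omega\in\mathcal R$ the image of $(x_n)$; consequently $w^\ast(x_n)_\omega w=\lambda^\omega\!\big((x_n)^\omega\big)$, so that already $\mathsf M^\omega\subseteq w^\ast\mathcal R w$.

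The heart of the matter --- and the step I expect to be the main obstacle --- is to show that $\op{ran}w$ reduces $\mathcal R$; equivalently, that $\mathcal M^\omega\xi^\omega$ is $\|\cdot\|_2$-dense in $(\mathsf M_n)_\omega\xi^\omega$, and similarly with $\mathsf M_n$ replaced by its commutant $\mathsf M_n'$ (for which $\xi_{\varphi_n}$ is likewise cyclic and separating). This is precisely where one must cope with the defect that $\ell^\infty(\N,\mathsf M_n)/\mathsf I_\omega$ fails to be an algebra, while $\mathsf M^\omega=\mathcal M^\omega/\mathsf I_\omega$ is. The natural approach is to regularise an arbitrary bounded sequence $(x_n)$ along the modular flows: setting $y_n:=\int_{\R}g(t)\,\sigma^{\varphi_n}_t(x_n)\,dt$ for a kernel $g$ concentrated near $0$, one has $\|y_n\|\le\|x_n\|$, the sequence $(y_n)$ becomes regular enough for the modular dynamics to land in $\mathcal M^\omega$, and $y_n\xi_{\varphi_n}$ is obtained from $x_n\xi_{\varphi_n}$ by applying a fixed function of $\log\Delta_n$ (the modular operator of $\varphi_n$) tending to the identity as $g\to\delta_0$, so that $y_n\xi_{\varphi_n}\to x_n\xi_{\varphi_n}$; making this simultaneously work for all $n$ --- choosing the width of $g$ in terms of $n$ and passing to the limit along $\omega$ --- is the technical core. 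Granting it, $\op{ran}w=\overline{(\mathsf M_n)_\omega\xi^\omega}$ is invariant under $(\mathsf M_n)_\omega$, hence under $\mathcal R$ (a weak limit of operators leaving a closed subspace invariant does the same), and symmetrically under $(\mathsf M_n')_\omega$; thus the projection $q$ onto $\op{ran}w$ lies in $\mathcal R'$, $w$ conjugates $L^2(\mathsf M^\omega,\varphi^\omega)$ onto $\op{ran}w$ with $\xi^\omega$ cyclic and separating for the reduced algebra $\mathcal Rq$, and $w^\ast\mathcal R w=\mathcal Rq$.

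Finally, to prove $\mathcal Rq\subseteq\mathsf M^\omega$ (where $\mathsf M^\omega$ is identified with $\lambda^\omega(\mathsf M^\omega)$ sitting inside $\mathcal Rq$ via the intertwining above) I would invoke modular theory. The unitary group $(\Delta_n^{it})_\omega$ preserves $\op{ran}w$, fixes $\xi^\omega$, and conjugates $(x_n)_\omega$ to $(\sigma^{\varphi_n}_t(x_n))_\omega$; since the KMS condition passes to the ultraproduct, it implements the modular automorphism group of the vector state $\langle\xi^\omega,\,\cdot\,\xi^\omega\rangle$ on $\mathcal Rq$, which is faithful by the previous paragraph. As $\sigma^{\varphi_n}_t$ is a $\|\cdot\|^{\#}_{\varphi_n}$-isometry it maps $\mathcal M^\omega$ into itself, whence $\sigma^{\varphi^\omega}_t\!\big((x_n)^\omega\big)=\big(\sigma^{\varphi_n}_t(x_n)\big)^\omega$, so $\mathsf M^\omega$ is globally invariant under the modular group of $\mathcal Rq$. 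Takesaki's theorem then yields a faithful normal $\langle\xi^\omega,\,\cdot\,\xi^\omega\rangle$-preserving conditional expectation $E\colon\mathcal Rq\to\mathsf M^\omega$; but $\xi^\omega$ is cyclic for $\mathsf M^\omega$ on $\op{ran}w$, so the projection $e$ onto $\overline{\mathsf M^\omega\xi^\omega}$ equals the identity, and the basic-construction identity $e\,T\,e=E(T)\,e$ for $T\in\mathcal Rq$ forces $T=E(T)$, i.e.\ $E=\op{id}$. Therefore $w^\ast\mathcal R w=\mathcal Rq=\mathsf M^\omega$, as required; apart from the density statement of the second paragraph, all of this is routine modular-theoretic bookkeeping.
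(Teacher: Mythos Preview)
The paper does not prove this theorem at all: it is quoted verbatim from Ando--Haagerup \cite[Theorem~3.7]{MR3198856} and used as a black box, so there is no ``paper's own proof'' to compare against. Your sketch therefore cannot be matched against anything in this paper.

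That said, your outline is essentially the Ando--Haagerup strategy. The isometry computation and the intertwining $w\lambda^\omega((x_n)^\omega)=(x_n)_\omega w$ are routine and correct. You are also right that the crux is the $\|\cdot\|_2$-density of $\mathcal M^\omega\xi^\omega$ in $(\mathsf M_n)_\omega\xi^\omega$, and modular regularisation is indeed the mechanism Ando--Haagerup use. Two points deserve care, however. First, smoothing with a single kernel $g$ does not by itself force $(y_n)\in\mathcal M^\omega$: what one actually needs (and what the paper records just before Section~\ref{embedd}) is a uniform bound on $\sigma^n_{-i}(y_n)$, and this requires choosing $g$ (or, as in Ando--Haagerup, spectral cut-offs of $\log\Delta_n$) depending on $n$ in a controlled way; your parenthetical acknowledges this but the dependence must be made explicit for the argument to close. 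Second, your claim that $\xi^\omega$ is separating for $\mathcal R q$ is asserted rather than proved; in the Ando--Haagerup presentation this is handled by working instead with the support projection $p\in\mathcal R$ of the ultraproduct state (cf.\ the Remark following the theorem in the paper and \cite[Proposition~3.15]{MR3198856}), which automatically makes the restricted state faithful on $p\mathcal R p$. Your $q\in\mathcal R'$ route can be made to work, but you would need the symmetric density statement on the commutant side to get separateness, and that is an additional argument, not a symmetry for free.
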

\begin{rem}
Let $p$ be the support projection of the ultraproduct state $(\varphi_n)_{\omega}$ on the Raynaud ultraproduct $\prod^{\omega}(\mathsf{M}_n, \varphi_n)$. Then $\mathsf{M}^{\omega}$ is $\ast$-isomorphic to the corner $p\left(\prod^{\omega}(\mathsf{M}_n, \varphi_n)\right)p$ of the Raynaud ultraproduct (cf. \cite[Proposition 3.15]{MR3198856}).
\end{rem}
We would now like to describe a useful theorem from \cite{MR2200739} concerning embeddings into ultraproducts.
\begin{thm}[{\cite[Theorem 4.3]{MR2200739}}]\label{thm:Nouembedding}
Let $(\mathsf{N}, \psi)$ and $(\mathsf{M}_n, \varphi_n)_{n \in \N}$ be von Neumann algebras equipped with normal faithful states. Let $\omega$ be a non-principal ultrafilter on $\N$ and let $\prod^{\omega}(\mathsf{M}_n, \varphi_n)$ be the Raynaud ultraproduct. Let $(\sigma_t^n)_{t \in \R}$ denote the modular group of $\varphi_n$. Let $p\in \prod^{\omega}(\mathsf{M}_n, \varphi_n)$ denote the support of the ultraproduct state $(\varphi_n)_{\omega}$. Suppose that $\widetilde{\mathsf{N}} \subset \mathsf{N}$ is a weak$^{\ast}$-dense $\ast$-subalgebra of $\mathsf{N}$ and we are given a $\ast$-homomorphism 
\[
\Phi\from \widetilde{\mathsf{N}} \to \prod^{\omega}(\mathsf{M}_n, \varphi_n).
\]
Assume that $\Phi$ satisfies the following conditions:
\begin{enumerate}[{\normalfont (i)}]
\item It is state preserving, i.e. $ (\varphi_n)_{\omega} \left(\Phi(x)\right) = \psi(x)$ for any $x \in \widetilde{\mathsf{N}}$;
\item\label{cond:analyticity} For any $x \in \Phi(\widetilde{\mathsf{N}})$ there is a representative $(x_n)_{n\in\N} \in \ell^{\infty}(\N, \mathsf{M}_n)$ such that $x_n$ is analytic for $(\sigma_t^n)_{t \in \R}$ and the sequence $(\sigma_{-i}^n (x_n))_{n \in \N}$ is bounded (cf. \cite[Lemma 4.1]{MR2200739}).
\item For all $t\in \R$ and for all $y = (y_n)_{\omega} \in \Phi(\widetilde{\mathsf{N}})$, irrespective of the choice of the representative $(y_n)_{n\in\N} \in \ell^{\infty}(\N, \mathsf{M}_n)$, we have
\[
p (\sigma_t^n(y_n))_{\omega} p \in p\mathsf{B}p,
\]
where $\mathsf{B}$ is the w$^{\ast}$-closure of $\Phi(\widetilde{\mathsf{N}})$. 
\end{enumerate}
Then the map $\Theta:=p\Phi p: \widetilde{\mathsf{N}} \to p\left(\prod^{\omega}(\mathsf{M}_n, \varphi_n)\right)p$ is a state-preserving $\ast$-homomorphism that can be extended to a normal $\ast$-isomorphism from $\mathsf{N}$ onto $p\mathsf{B}p$. Moreover, there exists a normal, state-preserving conditional expectation from $\prod^{\omega}(\mathsf{M}_n, \varphi_n)$ onto $\Theta(\mathsf{N})$.
\end{thm}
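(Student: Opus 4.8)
The plan is to transfer the whole picture to the Ocneanu ultraproduct $\mathsf{M}^{\omega}:=(\mathsf{M}_n,\varphi_n)^{\omega}$, where Tomita--Takesaki theory is transparent, by systematically invoking the dictionary of Theorem \ref{thm:HaagerupAndo}: the corner $p(\prod^{\omega}(\mathsf{M}_n,\varphi_n))p$ is canonically identified (through the isometry $w$, with $ww^{\ast}=p$) with $\mathsf{M}^{\omega}$ in such a way that the restriction of the ultraproduct state to that corner becomes the \emph{faithful} state $\varphi^{\omega}=(\varphi_n)^{\omega}$. Once this is in place, hypotheses (ii) and (iii) are exactly the two facts one needs: (ii) is the criterion of \cite[Lemma 4.1]{MR2200739} ensuring that each $\Phi(x)$, $x\in\widetilde{\mathsf{N}}$, admits a representing sequence lying in the multiplier algebra $\mathcal{M}^{\omega}(\mathsf{M}_n,\varphi_n)$, so that its class modulo $\mathsf{I}_{\omega}$ is a genuine element of $\mathsf{M}^{\omega}$; and (iii) is, via the componentwise description $\sigma_t^{\varphi^{\omega}}((y_n)^{\omega})=(\sigma_t^n(y_n))^{\omega}$ of the modular group of $\varphi^{\omega}$, precisely the modular-invariance hypothesis in Takesaki's theorem on conditional expectations. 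The main obstacle is really the careful bookkeeping hidden in this dictionary --- knowing which representing sequences and which null ideals correspond under $p(\prod^{\omega})p\cong\mathsf{M}^{\omega}$, and that on the relevant dense $\ast$-subalgebra the modular group of $\varphi^{\omega}$ is genuinely the componentwise one; these are the substance of \cite{MR2200739} and of the Haagerup--Ando theory underlying Theorem \ref{thm:HaagerupAndo}, and I would quote them rather than reprove them.

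I would first check that $\Theta=p\Phi(\cdot)p$ is a well-defined state-preserving $\ast$-homomorphism from $\widetilde{\mathsf{N}}$ into $\mathsf{M}^{\omega}$. For $x\in\widetilde{\mathsf{N}}$, pick by (ii) a representative $(x_n)_n\in\mathcal{M}^{\omega}(\mathsf{M}_n,\varphi_n)$ of $\Phi(x)$; under the identification $p(\prod^{\omega})p\cong\mathsf{M}^{\omega}=\mathcal{M}^{\omega}/\mathsf{I}_{\omega}$ the element $\Theta(x)$ is then the class $(x_n)^{\omega}$. Independence of the choice of $(x_n)$ holds because two such representatives differ by a sequence that is null in the Raynaud sense, which on $\mathcal{M}^{\omega}$ is the same as lying in $\mathsf{I}_{\omega}$; multiplicativity and $\ast$-compatibility of $\Theta$ are inherited from those of $\Phi$ together with the fact that $\mathcal{M}^{\omega}$ is a $\ast$-algebra and $(x_ny_n)^{\omega}=(x_n)^{\omega}(y_n)^{\omega}$; and $\varphi^{\omega}(\Theta(x))=\lim_{n\to\omega}\varphi_n(x_n)=(\varphi_n)_{\omega}(\Phi(x))=\psi(x)$ by (i).

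Next I would extend $\Theta$ to a normal $\ast$-isomorphism of $\mathsf{N}$ onto $p\mathsf{B}p$. State-preservation makes $x\xi_{\psi}\mapsto\Theta(x)\xi_{\varphi^{\omega}}$ isometric on the dense subspace $\widetilde{\mathsf{N}}\xi_{\psi}\subset L^2(\mathsf{N},\psi)$, hence it extends to an isometry $V\colon L^2(\mathsf{N},\psi)\to L^2(\mathsf{M}^{\omega},\varphi^{\omega})$ satisfying $V\lambda_{\psi}(x)V^{\ast}=\Theta(x)$ for $x\in\widetilde{\mathsf{N}}$ (both sides agree on $\Theta(\widetilde{\mathsf{N}})\xi_{\varphi^{\omega}}$ and are bounded). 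I would then simply define the extension by $\Theta(a):=V\lambda_{\psi}(a)V^{\ast}$, $a\in\mathsf{N}$: since $V^{\ast}V=1$ this is a normal $\ast$-homomorphism, it is injective because $\lambda_{\psi}$ is faithful, it restricts to the original $\Theta$ on $\widetilde{\mathsf{N}}$, and its range lies in the $w^{\ast}$-closed algebra $\mathsf{M}^{\omega}$ and equals $\overline{\Theta(\widetilde{\mathsf{N}})}^{w^{\ast}}=\overline{p\Phi(\widetilde{\mathsf{N}})p}^{w^{\ast}}=p\,\overline{\Phi(\widetilde{\mathsf{N}})}^{w^{\ast}}p=p\mathsf{B}p$, the last steps using normality of $z\mapsto pzp$ and Kaplansky density.

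Finally I would construct the conditional expectation as a composition $E=E_2\circ E_1$. The map $E_1\colon\prod^{\omega}(\mathsf{M}_n,\varphi_n)\to p(\prod^{\omega}(\mathsf{M}_n,\varphi_n))p$, $z\mapsto pzp$, is a normal norm-one idempotent, hence a conditional expectation by Tomiyama's theorem; it preserves $(\varphi_n)_{\omega}$ because $p$ is the support projection of that state, so that $(\varphi_n)_{\omega}(z(1-p))=(\varphi_n)_{\omega}((1-p)z)=0$ by Cauchy--Schwarz. For $E_2$ I pass to $\mathsf{M}^{\omega}$ with its faithful state $\varphi^{\omega}$ and apply Takesaki's theorem: a normal $\varphi^{\omega}$-preserving conditional expectation onto the von Neumann subalgebra $p\mathsf{B}p=\Theta(\mathsf{N})$ exists as soon as $p\mathsf{B}p$ is globally invariant under $(\sigma_t^{\varphi^{\omega}})_{t\in\R}$. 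By the componentwise formula for $\sigma_t^{\varphi^{\omega}}$ (equivalently $\sigma_t^{\varphi^{\omega}}(p(y_n)_{\omega}p)=p(\sigma_t^n(y_n))_{\omega}p$ in the Raynaud picture), hypothesis (iii) says precisely that $\sigma_t^{\varphi^{\omega}}(\Theta(\widetilde{\mathsf{N}}))\subseteq p\mathsf{B}p$ for every $t$; since $\Theta(\widetilde{\mathsf{N}})$ generates $p\mathsf{B}p$ as a von Neumann algebra and $\sigma_t^{\varphi^{\omega}}$ is normal, this forces $\sigma_t^{\varphi^{\omega}}(p\mathsf{B}p)=p\mathsf{B}p$ for all $t$. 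Takesaki's theorem then produces $E_2$, and $E=E_2\circ E_1\colon\prod^{\omega}(\mathsf{M}_n,\varphi_n)\to\Theta(\mathsf{N})$ is normal and $(\varphi_n)_{\omega}$-preserving, which completes the argument.
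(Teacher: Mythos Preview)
The paper does not supply its own proof of this theorem; it is quoted verbatim from Nou \cite[Theorem 4.3]{MR2200739}. The only related argument the paper gives is the remark closing Section~\ref{prelim}, which spells out why hypothesis (ii) forces the representing sequence into $\mathcal{M}^{\omega}(\mathsf{M}_n,\varphi_n)$ --- exactly the point you invoke as \cite[Lemma 4.1]{MR2200739}. So there is nothing in the paper to compare your argument against beyond that.

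Your strategy is the right one (pass to the Ocneanu picture via (ii), use state-preservation for the GNS isometry, and apply Takesaki via (iii)), but there is a genuine slip in the extension step. You assert $V\lambda_\psi(x)V^{\ast}=\Theta(x)$ on $L^2(\mathsf{M}^{\omega},\varphi^{\omega})$ because both sides agree on $\Theta(\widetilde{\mathsf{N}})\xi_{\varphi^{\omega}}$. But that set is only the range of $V$, which is a \emph{proper} closed subspace of $L^2(\mathsf{M}^{\omega})$ whenever $p\mathsf{B}p\subsetneq\mathsf{M}^{\omega}$. On the orthogonal complement $V\lambda_\psi(x)V^{\ast}$ vanishes while $\Theta(x)$ need not; what one actually has is $V\lambda_\psi(x)V^{\ast}=\Theta(x)Q$ with $Q=VV^{\ast}$. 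Consequently your proposed extension $a\mapsto V\lambda_\psi(a)V^{\ast}$ neither restricts to the original $\Theta$ on $\widetilde{\mathsf{N}}$ nor obviously lands in $\mathsf{M}^{\omega}$.

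The repair is to swap the order of your last two steps. Run the Takesaki argument first: condition (iii) gives $\sigma_t^{\varphi^{\omega}}(p\mathsf{B}p)=p\mathsf{B}p$, hence a normal $\varphi^{\omega}$-preserving conditional expectation $\mathsf{M}^{\omega}\to p\mathsf{B}p$. This makes $\varphi^{\omega}|_{p\mathsf{B}p}$ faithful and identifies $L^2(p\mathsf{B}p,\varphi^{\omega})$ with $\overline{p\mathsf{B}p\,\xi_{\varphi^{\omega}}}=\overline{\Theta(\widetilde{\mathsf{N}})\xi_{\varphi^{\omega}}}=\operatorname{ran}V$. With this codomain $V$ is a \emph{unitary} $L^2(\mathsf{N},\psi)\to L^2(p\mathsf{B}p,\varphi^{\omega})$, and now $a\mapsto V\lambda_\psi(a)V^{\ast}$ is the desired normal $\ast$-isomorphism $\mathsf{N}\to p\mathsf{B}p$ extending $\Theta$.
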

\begin{rem}
From the Remark following Theorem \ref{thm:HaagerupAndo} we deduce that the image of $\Theta$ is actually contained in an isomorphic copy of Ocneanu ultraproduct.
\end{rem}
This theorem will be our most important ally in the next section of the paper. We will exploit the fact that it connects nicely the Ocneanu ultraproduct and the Raynaud ultraproduct, allowing us to resort to whichever one we prefer; both will be useful for us.

We end this section with a simple remark strengthening the connection between Theorem \ref{thm:Nouembedding} and the Ocneanu ultraproduct. Suppose that $(x_n)_{n \in \N} \in \ell^{\infty}(\N,\mathsf{M}_n)$ is a representative of an element $x\in (\mathsf{M}_n)_{\omega}$ such that the sequence $(\sigma_{-i}^n(x_n))_{n \in \N}$ is bounded. Then the sequence $(x_n)_{n \in \N}$ belongs to $\mathcal{M}^{\omega}(\mathsf{M}_n, \varphi_n)$, so it defines an element of the Ocneanu ultraproduct. Indeed, suppose that $(y_n) \in \mathsf{I}_{\omega}(\mathsf{M}_n, \varphi_n)$. We would like to check that $\lim_{n \to \omega} \|x_n y_n\|_{\varphi_n}^{\#} = 0$. It boils down to checking that $\lim_{n \to \omega} \varphi_{n} (y_n^{\ast}x_n^{\ast}x_n y_n)=0$ and $\lim_{n \to \omega} \varphi_n (x_n y_n y_n^{\ast} x_n^{\ast}) = 0$. The first equality is easy to verify because $y_n^{\ast} x_n^{\ast}x_n y_n \leqslant \|x_n\|^2 y_n^{\ast} y_n$ and the sequence $(x_n)_{n \in \N}$ is bounded. For the second one we will use the KMS condition:
\[
\varphi_n (x_n y_n y_n^{\ast} x_n^{\ast}) = \varphi_n( y_n y_n^{\ast} x_n^{\ast} \sigma_{-i}^n(x_n)).
\]
Note that $z_n:= x_n^{\ast} \sigma_{-i}^n(x_n)$ is a bounded sequence. If we denote $u_n = \sqrt{y_n y_n^{\ast}}$ then we have to bound $\varphi(u_n^2 y_n)$. By the Cauchy-Schwarz inequality we get
\[
|\varphi_n(u_n (u_n z_n)) \leqslant \varphi_n (u_n^2) \varphi_n(z_n^{\ast} u_n^2 z_n).
\]
By assumption we have $\lim_{n\to \omega} \varphi_n(u_n^2) = \lim_{n \to \omega} \varphi_n(y_n y_n^{\ast}) = 0$. The second term can be bounded above by the norm $\|z_n^{\ast} u_n^2 z_n\|$ that is bounded, so the product converges to zero.

\section{An ultraproduct embedding for \texorpdfstring{$q$}{}-Araki-Woods algebras} \label{embedd}
In this section we prove a result which shows that an arbitary $q$-Araki-Woods algebra embeds in a state preserving way into an ultraproduct of tensor products of $q$-Gaussian algebras and $q$-Araki-Woods algebras.  This result will be key to our establishment of a transference principle for completely bounded radial multipliers in the following section.

We begin with some notation.  Let $\Gamma_q(\mathsf{H})$ be a fixed $q$-Araki-Woods algebra for some $q\in (-1,1)$, and write $q=q_0 q_1$ for some $|q|<q_0<1$. For any $m\in \N$, we let $\Gamma_{q_0}(\R^m)$ be a $q$-Gaussian algebra and  $\Gamma_{q_1}(\mathsf{H} \otimes \C^m)$ be a $q$-Araki-Woods algebra, where the inner product on $\mathsf{H} \otimes \C^m$ is the tensor product of the given deformed inner product on $\mathsf{H}$ and the non-deformed one on $\C^m$.  In other words, if $(U_t)_t \curvearrowright \HH_\R$ is the orthogonal group associated to $\Gamma_q(\mathsf{H})$, then $(U_t \otimes 1)_t \curvearrowright \HH_\R \otimes \R^m$ is the orthogonal group associated to $\Gamma_{q_1}(\mathsf{H} \otimes \C^m)$.  Denote by $\chi,\chi_{0,m}$ and $\chi_{1,m}$ the $q$-quasi-free states on $\Gamma_q(\mathsf{H}), \Gamma_{q_0}(\R^m)$ and $\Gamma_{q_1}(\mathsf{H} \otimes \C^m)$, respectively. For each $m$, fix an orthonormal basis $(e_1,\dots,e_m)$ of $\R^m$ and define
\[
u_m(\xi) := \frac{1}{\sqrt{m}} \sum_{k=1}^{m} W(e_k) \otimes W(\xi \odot e_k) \in \Gamma_{q_0}(\R^m) \overline{\otimes} \Gamma_{q_1}(\mathsf{H} \otimes \C^m) \qquad (\xi \in \HH_\C).
\]
Finally, we fix a non-principal ultrafilter $\omega$ on $\N$, form the corresponding (Raynaud) ultraproduct \[\mathsf A = \prod^{\omega}\Big( \Gamma_{q_0}(\R^m) \overline{\otimes} \Gamma_{q_1}(\mathsf{H} \otimes \C^m), \chi_{0,m} \otimes \chi_{1,m}\Big),\] and let $p \in \mathsf A$ be the support of the ultraproduct state $(\chi_{0,m} \otimes \chi_{1,m})_\omega$.

With the above notation fixed, we can now state our embedding result.

\begin{thm}\label{Th:Embedding}
\begin{enumerate}
\item The mapping 
\[
W(\xi) \mapsto (u_m(\xi))_\omega \in \mathsf A \qquad (\xi \in \mathsf{H}_\C)
\]
extends uniquely to a state-preserving $\ast$-homomorphism $\pi_\omega:(\widetilde{\Gamma}_q(\HH), \chi) \to (\mathsf A, (\chi_{0,m} \otimes \chi_{1,m})_\omega)$.  
\item The map $\Theta:=p\pi_\omega(\cdot) p: \widetilde{\Gamma}_q(\HH) \to p\mathsf A p$ extends to a normal state-preserving $\ast$-isomorphism 
\[
\Theta: \Gamma_q(\HH) \to \Theta(\Gamma_q(\HH)) \subseteq p\mathsf  A p.
\] 
Moreover, $\Theta(\Gamma_q(\HH))$ is the range of a normal state-preserving conditional expectation $E: \mathsf  A \to \Theta(\Gamma_q(\HH))$.
\end{enumerate}
\end{thm}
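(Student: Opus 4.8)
The plan is to establish (1) by a moment computation of Junge--Zeng type, and (2) by verifying the hypotheses of Nou's embedding theorem (Theorem~\ref{thm:Nouembedding}).

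\textbf{Construction of $\pi_\omega$.} Recall from Subsection~\ref{qAraki-Wick} that $\widetilde{\Gamma}_q(\HH)$ is the free unital $\ast$-algebra on the self-adjoint generators $W(e_i) = s_q(e_i)$, where $(e_i)_{i \in I}$ is a fixed orthonormal basis of $\HH_\R$. Each $u_m(e_i)$ is a finite sum of elementary tensors of self-adjoint operators (note $e_i \in \R^m$ and $e_i \odot e_k \in \HH_\R \otimes \R^m$), hence self-adjoint, so I first check that $\big(u_m(\xi)\big)_{m}$ is uniformly bounded, and therefore that $(u_m(\xi))_\omega$ defines an element of the $C^\ast$-ultraproduct inside $\mathsf A$. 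Writing $y_k := W(\xi \otimes e_k) \in \Gamma_{q_1}(\HH \otimes \C^m) \subseteq \op{B}(\K)$ with $\K := \mc F_{q_1}(\HH \otimes \C^m)$, I would apply Corollary~\ref{cor:NCKhintchine} with $n = 1$ to $\sum_k y_k \odot e_k \in \op{B}(\K) \odot (\R^m_{\C})^{\odot 1}$ to get
\[
\|u_m(\xi)\| = m^{-1/2}\big\|(\op{Id} \odot W)\big(\textstyle\sum_k y_k \odot e_k\big)\big\| \leqslant 2C(q_0)\, m^{-1/2} \max\big\{ \big\|\textstyle\sum_k y_k^\ast y_k\big\|^{1/2}, \big\|\textstyle\sum_k y_k y_k^\ast\big\|^{1/2}\big\};
\]
since $y_k = a_{q_1}^\ast(\xi \otimes e_k) + a_{q_1}(I\xi \otimes e_k)$ with $\langle \xi \otimes e_k \mid \xi \otimes e_j\rangle_{U \otimes 1} = \delta_{kj}\|\xi\|_U^2$, the $q_1$-commutation relations give $\big\|\sum_k y_k^\ast y_k\big\|, \big\|\sum_k y_k y_k^\ast\big\| \leqslant c(q_1)\, m\, \|\xi\|_U^2$, hence $\sup_m \|u_m(\xi)\| \leqslant 2C(q_0)c(q_1)^{1/2}\|\xi\|_U =: c\|\xi\|_U$. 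The universal property of the free $\ast$-algebra then delivers a unique unital $\ast$-homomorphism $\pi_\omega$ with $\pi_\omega(W(e_i)) = (u_m(e_i))_\omega$, and $\C$-linearity forces $\pi_\omega(W(\xi)) = (u_m(\xi))_\omega$ for all $\xi \in \HH_\C$.

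\textbf{State-preservation.} Since $\chi$ and $(\chi_{0,m} \otimes \chi_{1,m})_\omega \circ \pi_\omega$ are both $\C$-linear on $\widetilde{\Gamma}_q(\HH)$, using the identification of $\widetilde{\Gamma}_q(\HH)$ with noncommutative polynomials it suffices to check $(\chi_{0,m} \otimes \chi_{1,m})_\omega\big(\pi_\omega(s_q(\xi_1) \cdots s_q(\xi_d))\big) = \chi(s_q(\xi_1) \cdots s_q(\xi_d))$ for $\xi_1, \dots, \xi_d \in \HH_\R$. Expanding the product and applying $\chi_{0,m} \otimes \chi_{1,m}$ to each tensor leg via the joint-moment (Wick) formula (\cite{MR2018229, MR2200739}), orthonormality of $(e_i)$ forces every contributing pair partition to refine $\ker k$, giving
\[
(\chi_{0,m} \otimes \chi_{1,m})\big(u_m(\xi_1) \cdots u_m(\xi_d)\big) = m^{-d/2} \sum_{k \colon [d] \to [m]} \Big( \sum_{\substack{\sigma \in \mc P_2(d) \\ \sigma \leqslant \ker k}} q_0^{\iota(\sigma)} \Big) \Big( \sum_{\substack{\tau \in \mc P_2(d) \\ \tau \leqslant \ker k}} q_1^{\iota(\tau)} \prod_{(r,t) \in \tau} \langle \xi_r | \xi_t\rangle_U \Big).
\]
The number of $k$ with $|\ker k| = b$ is $O(m^b)$, and the inner sums vanish unless every block of $\ker k$ has even size; so the terms with $|\ker k| < d/2$ contribute $O(m^{-1})$, while the terms with $\ker k \in \mc P_2(d)$ (forcing $\sigma = \tau = \ker k$) number $\sim m^{d/2}$ and, after dividing by $m^{d/2}$ and passing to the limit along $\omega$, produce $\sum_{\pi \in \mc P_2(d)} (q_0q_1)^{\iota(\pi)} \prod_{(r,t) \in \pi} \langle \xi_r | \xi_t\rangle_U = \sum_{\pi \in \mc P_2(d)} q^{\iota(\pi)} \prod_{(r,t) \in \pi} \langle \xi_r | \xi_t\rangle_U$, which by the joint-moment formula equals $\chi(s_q(\xi_1) \cdots s_q(\xi_d))$. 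This proves (1).

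\textbf{Proof of (2).} I would invoke Theorem~\ref{thm:Nouembedding} with $(\mathsf N, \psi) = (\Gamma_q(\HH), \chi)$, $\mathsf M_m = \Gamma_{q_0}(\R^m) \overline{\otimes} \Gamma_{q_1}(\HH \otimes \C^m)$, $\varphi_m = \chi_{0,m} \otimes \chi_{1,m}$, $\Phi = \pi_\omega$; condition (i) is exactly (1). Since the generators $W(e_i)$ need not be analytic for the modular group $\sigma_t$ of $\chi$, I would first pass to the weak$^\ast$-dense $\ast$-subalgebra $\widetilde{\mathsf N}_0 \subseteq \widetilde{\Gamma}_q(\HH)$ generated by $\{W(\zeta) \colon \zeta \in \HH_\C \text{ has bounded } A\text{-spectrum}\}$: this is weak$^\ast$-dense because the spectral projections $E_{[\epsilon, 1/\epsilon]}(A)$ commute with the complex conjugation $I$ (as $I E_S(A) I = E_{S^{-1}}(A)$ and $[\epsilon, 1/\epsilon]$ is invariant under $t \mapsto t^{-1}$), hence preserve $\HH_\R$, so $s_q(E_{[\epsilon, 1/\epsilon]}(A)e) \to s_q(e)$ in norm for every $e \in \HH_\R$. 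The modular group $\sigma_t^m$ of $\varphi_m$ is trivial on $\Gamma_{q_0}(\R^m)$ and is $s_{q_1}(\eta) \mapsto s_{q_1}((A^{-it} \otimes 1)\eta)$ on $\Gamma_{q_1}(\HH \otimes \C^m)$, so $\sigma_t^m(u_m(\zeta)) = u_m(A^{-it}\zeta)$; for $\zeta$ with bounded $A$-spectrum the map $z \mapsto u_m(A^{-iz}\zeta)$ is entire and $\sup_m \|u_m(A^{-1}\zeta)\| \leqslant c\|A^{-1}\zeta\|_U < \infty$, which gives (ii). For (iii), $A^{-it}\zeta$ again has bounded $A$-spectrum, so $(u_m(A^{-it}\zeta))_\omega = \pi_\omega(W(A^{-it}\zeta)) \in \pi_\omega(\widetilde{\mathsf N}_0)$, whence $p(\sigma_t^m(u_m(\zeta)))_\omega p \in p\mathsf B p$ with $\mathsf B$ the weak closure of $\pi_\omega(\widetilde{\mathsf N}_0)$ (the independence on the representative handled via \cite[Lemma 4.1]{MR2200739}). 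Theorem~\ref{thm:Nouembedding} then yields a normal state-preserving $\ast$-isomorphism $\Gamma_q(\HH) \xrightarrow{\ \sim\ } p\mathsf B p$ extending $p\pi_\omega(\cdot)p|_{\widetilde{\mathsf N}_0}$, and a normal state-preserving conditional expectation $E \colon \mathsf A \to p\mathsf B p$. Finally, since $\|u_m((1 - E_{[\epsilon, 1/\epsilon]}(A))e)\| \leqslant c\|(1 - E_{[\epsilon, 1/\epsilon]}(A))e\|_U \to 0$ \emph{uniformly in $m$}, each $\pi_\omega(W(e))$, $e \in \HH_\R$, is a norm-limit of elements of $\pi_\omega(\widetilde{\mathsf N}_0)$, so $\mathsf B = \overline{\pi_\omega(\widetilde{\Gamma}_q(\HH))}^{\,w}$; and applying the isometric isomorphism $\Theta$ to the norm-convergent approximations $\prod_j s_q(E_{[\epsilon, 1/\epsilon]}(A)e_{i_j}) \to x$ of a monomial $x \in \widetilde{\Gamma}_q(\HH)$ gives $\Theta(x) = p\pi_\omega(x)p$ for all $x \in \widetilde{\Gamma}_q(\HH)$, which is assertion (2).

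\textbf{Expected main obstacle.} I expect the heart of the argument, and the most delicate step, to be the moment computation: controlling the $m \to \omega$ limit so that precisely the partitions $\ker k \in \mc P_2(d)$ survive and, crucially, so that the two deformation parameters fuse into $q = q_0 q_1$. The uniform boundedness of $(u_m(\xi))_m$ via Nou's Khintchine inequality and the bookkeeping needed to apply Nou's embedding theorem should be more routine, although the non-analyticity of the natural generators forces the somewhat technical detour through $\widetilde{\mathsf N}_0$.
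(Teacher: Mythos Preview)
Your proposal is correct and follows essentially the same route as the paper: uniform boundedness of $(u_m(\xi))_m$ via the $n=1$ Khintchine inequality, the Junge--Zeng moment computation (you organise it by $\ker k$ where the paper exchanges sums and counts $m^{|\sigma\vee\sigma'|}$, but the two are equivalent), and then verification of the three hypotheses of Nou's Theorem~\ref{thm:Nouembedding} after restricting to generators with bounded $A$-spectrum. Your final paragraph, checking that the $\Theta$ produced by Nou's theorem on $\widetilde{\mathsf N}_0$ really coincides with $p\pi_\omega(\cdot)p$ on all of $\widetilde{\Gamma}_q(\HH)$, is a detail the paper leaves implicit.

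One small imprecision: the bound $\sup_m\|u_m(\xi)\|\le c\|\xi\|_U$ you claim for general $\xi\in\HH_\C$ is not correct, since $u_m(\xi)^\ast=u_m(I\xi)$ forces $\|u_m(\xi)\|\ge\|I\xi\|_U$ and $I$ is unbounded on $\HH$. The paper instead records the bound $2(1-q_0)^{-1/2}\|W(\xi\odot e_1)\|$, which depends on both $\|\xi\|_U$ and $\|I\xi\|_U$ but is still uniform in $m$. This does not damage your argument: for the construction of $\pi_\omega$ only $m$-uniform boundedness is needed, and in your final approximation step the vectors $(1-E_{[\epsilon,1/\epsilon]}(A))e$ lie in $\HH_\R$ (because the spectral projection commutes with $I$), where your bound is valid.
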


\begin{proof}
(1). Recall that the algebra of Wick words is $\ast$-isomorphic to the $\ast$-algebra of non-commutative polynomials, so any $\ast$-homomorphism $\pi_\omega: \widetilde{\Gamma}_q(\HH) \to \mathsf  A$ is uniquely determined by specifying the images $(\pi_{\omega}(W(e_i)))_{i \in I} \subset \mathsf  A$.  Thus to conclude that the claimed $\pi_\omega$ exists and is well-defined, we just need to check that each sequence $(u_m(\xi))_{m \in \N}$ ($\xi \in \HH_\C$) is norm-bounded and hence defines an element $(u_m(\xi))_{\omega} \in \mathsf A$.
To this end, we apply (the $n=1$ version of) Corollary \ref{cor:NCKhintchine} with coefficients $W(\xi \odot e_k) \in \op{B}(\mathsf{K})=\op{B}(\mathcal{F}_{q_1}(\HH \otimes \C^m))$ (see also \cite[Page 17]{MR2091676}) to conclude that 
\begin{align*}
\|u_m(\xi)\| &\le 2(1-q_0)^{\frac{-1}{2}}m^{\frac{-1}{2}} \max\Big\{\Big\|\sum_{k=1}^m W(\xi \odot e_k)^*W(\xi \odot e_k)\Big\|^{\frac12}, \Big\|\sum_{k=1}^m W(\xi \odot e_k)W(\xi \odot e_k)^*\Big\|^{\frac12}\Big\}\\
& \le 2(1-q_0)^{\frac{-1}{2}}\|W(\xi \odot e_1)\|.  
\end{align*} 
Finally we check that $\pi_\omega$ is state-preserving.   By linearity, it suffices to show that for any $ d \in \N$ and  $\xi_1, \ldots, \xi_d \in \HH_\R$, we have 
\[\lim_{m \to \infty} (\chi_{0,m} \otimes \chi_{1,m})\big(u_{m}(\xi_1) \cdot \ldots \cdot u_m(\xi_d)\big) = \chi(W(\xi_1) \cdot \ldots \cdot W(\xi_d)).\]
Fixing $m$ and considering the terms on the left-hand side above, we have 
\begin{align*}
& (\chi_{0,m} \otimes \chi_{1,m})\big(u_{m}(\xi_1) \cdot \ldots \cdot u_m(\xi_d)\big)   \\
&=m^{-d/2} \sum_{k:[d] \to [m]} \chi_{0,m}(W(e_{k(1)}) \cdot \ldots \cdot W(e_{k(d)})) \chi_{1,m}(W(\xi_1 \odot e_{k(1)}) \cdot \ldots \cdot W(\xi_d \odot e_{k(d)}))\\
&=m^{-d/2} \sum_{k:[d] \to [m]} \Big(\sum_{\substack{\sigma \in \mc  P_2(d) \\
\ker k \ge \sigma}}q_0^{\iota(\sigma)}\Big) \Big( \sum_{\sigma' \in \mc P_2(d)} q_1^{\iota(\sigma')} \prod_{(r,t) \in \sigma'} \langle \xi_r \odot e_{k(r)}| \xi_t \odot e_{k(t)} \rangle_U \Big)\\
&=m^{-d/2} \sum_{k:[d] \to [m]} \Big(\sum_{\substack{\sigma \in \mc  P_2(d) \\
\ker k \ge \sigma}}q_0^{\iota(\sigma)}\Big) \Big( \sum_{\substack{\sigma' \in \mc P_2(d)\\\ker k \ge \sigma'}} q_1^{\iota(\sigma')} \prod_{(r,t) \in \sigma'} \langle \xi_r| \xi_t  \rangle_U \Big) \\
&= \sum_{\substack{\sigma,\sigma' \in \mc P_2(d)}} q_0^{\iota(\sigma)} q_1^{\iota(\sigma')} \prod_{(r,t) \in \sigma'} \langle \xi_r| \xi_t  \rangle_U   \sum_{\substack{k:[d]\to [m]\\ \ker k \ge \sigma, \ker k \ge \sigma '}} m^{-d/2} \\
&=\sum_{\substack{\sigma,\sigma' \in \mc P_2(d)}} q_0^{\iota(\sigma)} q_1^{\iota(\sigma')} \prod_{(r,t) \in \sigma'} \langle \xi_r| \xi_t  \rangle_U   m^{-d/2 + |\sigma \vee \sigma'|}.
\end{align*} Since 
\[\lim_{m\to \infty} m^{-d/2 + |\sigma \vee \sigma'|} = \delta_{\sigma,\sigma'}  \qquad (\sigma, \sigma ' \in \mc P_2(d)),\]
we conclude that 
\[\lim_{m \to \infty} (\chi_{0,m} \otimes \chi_{1,m})\big(u_{m}(\xi_1) \cdot \ldots \cdot u_m(\xi_d)\big) = \sum_{\substack{\sigma \in \mc P_2(d)}} q^{\iota(\sigma)}\prod_{(r,t) \in \sigma} \langle \xi_r| \xi_t  \rangle_U = \chi(W(\xi_1) \cdot \ldots \cdot W(\xi_d)).\]

(2).  To exhibit the desired properties of $\Theta:= p \pi_\omega (\cdot) p$, we will verify conditions (i)--(iii) in Theorem \ref{thm:Nouembedding} for the $\ast$-homomorphism $\pi_\omega$. (i) follows immediately from part (1) of the present theorem.  For (ii), we note that by linearity and multiplicativity of $\pi_\omega$, it suffices to check condition (ii) on the generators $\pi_\omega(W(\xi)) = (u_m(\xi))_\omega$, $(\xi \in \HH_\C)$.  However, there is a minor issue here coming from the fact that for arbitrary $\xi \in \HH_\C$, there is no reason to expect elements $u_m(\xi) \in \Gamma_{q_0}(\R^m) \overline{\otimes} \Gamma_{q_1}(\HH \otimes \C^m)$ to even be analytic, let alone the sequence $(\sigma_{-i}(u_m(\xi)))_{m \in \N}$ be uniformly bounded.  To overcome this issue, put $\HH_\C^{an} = \bigcup_{\lambda > 1}{\textbf 1}_{[\lambda^{-1}, \lambda]}(A)\HH_\C$, where ${\textbf 1}_{[\lambda^{-1}, \lambda]}(A)$ denotes the spectral projection of the analytic generator $A$ corresponding to the interval $[\lambda^{-1},\lambda]$.  Following \cite[Theorem 3.1]{MR3599523}, we see that $\HH_\C^{an} \subset \HH_\C$ is a dense linear subspace such that 
$I \HH_\C^{an} = \HH_\C^{an}$.  Moreover, for each $\xi \in \HH_\C^{an}$, we have that $\xi$ (respectively $W(\xi)$) is analytic for the action of the unitary group $U_t = A^{it}$ (respectively the modular automorphism group $\sigma_t$), and  \[\sigma_zW(\xi) = W(A^{-iz}\xi) \qquad (z \in \C).\] 

In our present setting, we shall restrict the domain of $\pi_\omega$ to the $\ast$-subalgebra $\widetilde{\Gamma}_q(\HH)_{an} \subset \widetilde{\Gamma}_q(\HH)$, consisting of linear combinations of Wick words of the form $W(\xi)$ with $\xi \in (\HH_\C^{an})^{\odot n}$, $(n \in \N_0)$.  Since $\widetilde{\Gamma}_q(\HH)_{an}$ is still w$^\ast$-dense in $\Gamma_q(\HH)$ and is generated by $(W(\xi))_{\xi \in \HH_{\C}^{an}}$, we just have to show that the equivalence class representative $(u_m(\xi))_{m \in \N}$ for $\pi_\omega(W(\xi))$ satisfies condition (ii) of Theorem \ref{Th:Embedding} for each $\xi \in \HH_\C^{an}$.
To this end, note that on $\Gamma_{q_0}(\R^m) \overline{\otimes} \Gamma_{q_1}(\HH \otimes \C^m)$, we have  \[\sigma_t^m = \id_{\Gamma_{q_0}(\R^m)} \otimes \sigma_t^{\Gamma_{q_1}(\HH \otimes \C^m)} \quad \& \quad \sigma_t^{\Gamma_{q_1}(\HH \otimes \C^m)}(W(\xi \odot e)) = W(A^{-it}\xi \odot e) \qquad (\xi \in \HH_\C, \ e \in \C^m).\]  It follows from these identities that if $\xi \in \HH_\C^{an}$ and $e\in \C^m$, then elements $W(\xi \odot e)$ and $u_m(\xi)$ are analytic for their respective modular groups and
\begin{align*}
\sigma_z^m(u_m(\xi)) &= \Big(\frac{1}{\sqrt{m}} \sum_{k=1}^{m} W(e_k) \otimes \sigma_z^{\Gamma_{q_1}(\HH \otimes \C^m)}W(\xi \odot e_k)\Big) \\
&=\frac{1}{\sqrt{m}} \sum_{k=1}^{m} W(e_k) \otimes W(A^{-iz}\xi \odot e_k) = u_m(A^{-iz}\xi), \qquad (z \in \C).
\end{align*}
The uniform boundedness of the sequence $(\sigma_{-i}^mu_m(\xi))_{m \in \N}$ now follows along the same lines as that of $(u_m(\xi))_{m \in \N}$:
\[
\sup_m\|\sigma_{-i}^mu_m(\xi)\| = \sup_m\|u_m(A^{-1}\xi)\| \le 2(1-q_0)^{\frac{-1}{2}}\|W(A^{-1}\xi \odot e_1)\|.  
\]
For (iii), it again suffices by linearity and multiplicativity to verify that for all $\pi_\omega (W(\xi)) = (u_m(\xi))_\omega$, ($\xi \in \HH_\C$),
\[
p((\sigma_t^m(u_m(\xi)))_\omega)p \in p\mathsf B p,
\] 
where $\mathsf B$ is the w$^\ast$-closure of $\pi_\omega(\widetilde{\Gamma}_q(\HH))$ in $\mathsf A$.  But this last point is obvious, because by the previous computation, $\sigma_t^m(u_m(\xi)) =  u_m(A^{-it}\xi)$ for all $m$, giving
\[
p((\sigma_t^m(u_m(\xi)))_\omega)p  = p((u_m(A^{-it}\xi))_\omega) p = p \pi_\omega(W(A^{-it}\xi))p  \in  p\mathsf B p.
\]
\end{proof}

\section{Transferring radial multipliers} \label{transfer}
The main aim of this section is to use the ultraproduct embedding result (Theorem \ref{Th:Embedding}) of the previous section to establish the following transference result for radial multipliers on $q$-Araki-Woods algebras.  In what follows, we freely use the notation of the previous sections.

\begin{thm} \label{Th:Transference}
Let $\varphi: \N \to \C$ be a function such that the associated radial multipliers $\mathsf{m}_{\varphi}: \Gamma_q(\R^m) \to \Gamma_q(\R^m)$  have completely bounded norms uniformly bounded in $m$. Then the radial multiplier defined by $\varphi$ on any $q$-Araki-Woods algebra $\Gamma_q(\HH)$ is completely bounded and 
\[
\|\mathsf{m}_{\varphi}: \Gamma_q(\mathsf{H}) \to \Gamma_q(\mathsf{H})\|_{\op{cb}} \le  \sup_{m \in \N} \|\mathsf{m}_{\varphi}: \Gamma_q(\R^m) \to \Gamma_q(\R^m) \|_{\op{cb}} = \|\mathsf{m}_{\varphi}:\Gamma_q(\ell_{2,\R})\to\Gamma_q(\ell_{2,\R})\|_{\op{cb}}. 
\]
\end{thm}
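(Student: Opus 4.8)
The plan is to run the embedding of Theorem~\ref{Th:Embedding} as a transference device. Fix, as in Section~\ref{embedd}, a factorisation $q=q_0q_1$ with $|q|<q_0<1$, and recall the resulting state-preserving normal $\ast$-isomorphism $\Theta$ of $\Gamma_q(\HH)$ onto a subalgebra of the ultraproduct $\mathsf A$ of the building blocks $\mathsf B_m=\Gamma_{q_0}(\R^m)\overline{\otimes}\Gamma_{q_1}(\HH\otimes\C^m)$, together with a normal state-preserving conditional expectation onto $\Theta(\Gamma_q(\HH))$. On $\mathsf B_m$ I would consider $\Psi_m:=\mathsf{m}_\varphi\otimes\id$, where $\mathsf{m}_\varphi$ is now the radial multiplier of the \emph{tracial} $q_0$-Gaussian algebra $\Gamma_{q_0}(\R^m)$; then $\|\Psi_m\|_{cb}=\|\mathsf{m}_\varphi:\Gamma_{q_0}(\R^m)\|_{cb}$, and since $\Psi_m$ is normal, uniformly completely bounded, and commutes with the modular group of $\chi_{0,m}\otimes\chi_{1,m}$ (which acts only on the second leg), the argument of the Remark closing Section~\ref{prelim} shows that the sequence $(\Psi_m)_m$ descends to a normal completely bounded map $\widetilde{\mathsf{m}}$ on $\mathsf A$ with $\|\widetilde{\mathsf{m}}\|_{cb}\le\sup_m\|\mathsf{m}_\varphi:\Gamma_{q_0}(\R^m)\|_{cb}$.

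The crux is a \emph{degree--matching} lemma: $\Theta$ maps $\Gamma_q^n(\HH)$ into the part of $\mathsf A$ represented by sequences valued in $P_n(\Gamma_{q_0}(\R^m))\overline{\otimes}\Gamma_{q_1}(\HH\otimes\C^m)$, i.e.\ with first tensor leg of degree exactly $n$. Granting this, $\widetilde{\mathsf{m}}$ acts on $\Theta(\Gamma_q^n(\HH))$ as multiplication by $\varphi(n)$, hence $\widetilde{\mathsf{m}}\circ\Theta=\Theta\circ\mathsf{m}_\varphi$ on the w$^\ast$-dense $\ast$-subalgebra $\widetilde{\Gamma}_q(\HH)_{an}$; by normality $\widetilde{\mathsf{m}}$ leaves $\Theta(\Gamma_q(\HH))$ globally invariant and $\Theta^{-1}\circ\widetilde{\mathsf{m}}\circ\Theta=\mathsf{m}_\varphi$ on all of $\Gamma_q(\HH)$. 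Restricting $\widetilde{\mathsf{m}}$ to the invariant subalgebra $\Theta(\Gamma_q(\HH))$ (equivalently, composing with the conditional expectation of Theorem~\ref{Th:Embedding}) gives $\|\mathsf{m}_\varphi:\Gamma_q(\HH)\|_{cb}\le\|\widetilde{\mathsf{m}}\|_{cb}\le\sup_m\|\mathsf{m}_\varphi:\Gamma_{q_0}(\R^m)\|_{cb}$.

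To prove the degree--matching lemma I would compare second-quantisation (Ornstein--Uhlenbeck) contractions: $\Gamma_q(\lambda\,\id)$ on $\Gamma_q(\HH)$ multiplies $\Gamma_q^n(\HH)$ by $\lambda^n$ for $\lambda\in[-1,1]$, and likewise $\Gamma_{q_0}(\lambda\,\id)\otimes\id$ multiplies the first-leg-degree-$a$ part of $\mathsf B_m$ by $\lambda^a$; the latter also descends to $\mathsf A$. A moment computation in the exact spirit of the proof of Theorem~\ref{Th:Embedding}(1) --- where the extra factors $\lambda^{|I_1|}$ coming from the first-leg Wick decomposition again survive the $m\to\infty$ limit only on the diagonal terms $\sigma=\sigma'$ --- shows $\big(\Gamma_{q_0}(\lambda\,\id)\otimes\id\big)^{\omega}\circ\Theta=\Theta\circ\Gamma_q(\lambda\,\id)$ for every $\lambda$. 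Thus each $\Theta(W(\xi))$ with $\xi\in(\HH_\C^{an})^{\odot n}$ is a simultaneous eigenvector of all $\big(\Gamma_{q_0}(\lambda\,\id)\otimes\id\big)^{\omega}$ with eigenvalue $\lambda^n$; choosing a representative with only finitely many first-leg degrees and letting $\lambda$ vary forces every first-leg-degree component other than $n$ to vanish in the ultraproduct, which is the claim. This CLT-type estimate, together with the routine verification that $(\Psi_m)_m$ and $(\Gamma_{q_0}(\lambda\,\id)\otimes\id)_m$ genuinely pass to the ultraproduct (here one uses that they commute with the modular groups, so they preserve analytic representatives with bounded $\sigma_{-i}$), is the step I expect to be the most delicate.

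It remains to identify the bound with the one in the statement. The equality $\sup_m\|\mathsf{m}_\varphi:\Gamma_q(\R^m)\|_{cb}=\|\mathsf{m}_\varphi:\Gamma_q(\ell_{2,\R})\|_{cb}$ is routine: $\Gamma_q(\R^m)\subseteq\Gamma_q(\ell_{2,\R})$ is the range of a trace-preserving conditional expectation (second quantisation of the coordinate projection) and $\bigcup_m\Gamma_q(\R^m)$ is w$^\ast$-dense. Finally, since $q_0\in(|q|,1)$ above was arbitrary, we obtain $\|\mathsf{m}_\varphi:\Gamma_q(\HH)\|_{cb}\le\inf_{|q|<q_0<1}\|\mathsf{m}_\varphi:\Gamma_{q_0}(\ell_{2,\R})\|_{cb}$, which equals $\|\mathsf{m}_\varphi:\Gamma_q(\ell_{2,\R})\|_{cb}$ because for tracial $q$-Gaussian algebras the completely bounded norm of a radial multiplier does not depend on the deformation parameter --- the tracial instance of the present transference, established by Junge and Zeng~\cite{1505.07852}. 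In particular the hypothesis makes the right-hand side finite, so $\mathsf{m}_\varphi$ is completely bounded on every $\Gamma_q(\HH)$.
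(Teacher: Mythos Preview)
Your overall architecture is exactly that of the paper: use Theorem~\ref{Th:Embedding} to embed $\Gamma_q(\HH)$ into the ultraproduct, observe that $(\mathsf m_\varphi\otimes\op{Id})$ on the $\Gamma_{q_0}(\R^m)$ leg descends to a normal cb map on $\mathsf A$, and then establish the key intertwining with $\Theta$ (your ``degree--matching lemma'', the paper's Theorem~\ref{Th:Intertwining}). You are also more careful than the paper on one point: the first leg carries $\Gamma_{q_0}(\R^m)$, not $\Gamma_q(\R^m)$, so the bound one \emph{literally} obtains is $\sup_m\|\mathsf m_\varphi:\Gamma_{q_0}(\R^m)\|_{cb}$; your appeal to the $q$-independence of tracial radial multiplier norms (Junge--Zeng) to pass from $q_0$ to $q$ is the right fix.

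Where your argument diverges from the paper is in the proof of the degree--matching lemma. The paper does not go through Ornstein--Uhlenbeck intertwining; it computes $\pi_\omega(W(\xi_1\odot\cdots\odot\xi_d))$ explicitly by induction on $d$ (Theorem~\ref{Th:Wickultraproduct}), showing it equals the sum $W^s$ over \emph{injective} multi-indices, and then controls the remainder terms via the Khintchine inequality (Lemma~\ref{Lem:coefficients} and Proposition~\ref{Prop:remainderbound}). This gives the first-leg degree $d$ on the nose. Your proposed route---prove $(\Gamma_{q_0}(\lambda\,\id)\otimes\id)^\omega\circ\Theta=\Theta\circ\Gamma_q(\lambda\,\id)$ by a ``moment computation in the spirit of Theorem~\ref{Th:Embedding}(1)'', then separate eigenvalues---is plausible but imprecise as written. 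The maps $\Gamma_q(\lambda\,\id)$ are not $\ast$-homomorphisms, so you cannot simply insert them into the \emph{state}-level CLT of Theorem~\ref{Th:Embedding}(1); that computation only sees the vacuum component and gains nothing from the $\lambda$-factors. What you actually need is the Hilbert-space identity $(\mc F_{q_0}(\lambda\,\id)\otimes\id)^\omega\circ V=V\circ\mc F_q(\lambda\,\id)$ on the GNS isometry $V$, checked via inner products $\langle V\eta,(\mc F_{q_0}(\lambda\,\id)\otimes\id)^\omega V\xi\rangle$; this is a two-variable CLT with an internal $\lambda$-grading, and carrying it out is work of the same order as the paper's inductive remainder estimates---it is not a shortcut. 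If you flesh that out, your argument goes through; as stated, the ``moment computation'' step is the gap.
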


The main technical tool in establishing  Theorem \ref{Th:Transference} is the following intertwining-type property for projections onto Wick words of a given length with respect to the ultraproduct embedding given by Theorem \ref{Th:Embedding}

\begin{thm}\label{Th:Intertwining}
Let $\Gamma_q(\mathsf{H})$ be a $q$-Araki-Woods algebra. Let $P_n\from \Gamma_q(\mathsf{H})\to \Gamma_q(\mathsf{H})$ be the projection onto the ultraweakly closed span of $\{W(\xi): \xi \in \mathsf{H}_{\C}^{\odot n}\}$. Then, using the notation from Theorem \ref{Th:Embedding}, we have 
\begin{align} \label{int-eqn}
\Theta \circ P_n = p(P_n \otimes \op{Id})_\omega p \circ \Theta.
\end{align}
\end{thm}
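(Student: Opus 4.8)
The plan is to verify the intertwining relation \eqref{int-eqn} on a w$^\ast$-dense subalgebra by a direct computation, and then extend by normality. Both sides of \eqref{int-eqn} are bounded (the left side because $P_n$ and $\Theta$ are bounded; the right side because $P_n \otimes \op{Id}$ is cb on each tensor factor with norm controlled by Theorem \ref{Th:Polynomial bound}, hence extends to the ultraproduct, though in fact we only need boundedness) and normal, so it suffices to check the identity on Wick words $W(\xi)$ with $\xi = \xi_1 \odot \dots \odot \xi_d$ and $\xi_1,\dots,\xi_d \in \HH_\C^{an}$, which span a w$^\ast$-dense subalgebra. On such elements, $\Theta(W(\xi))$ is (after compressing by $p$) represented by the sequence $(u_m(\xi_1)\cdots u_m(\xi_d))_\omega$, since $\pi_\omega$ is a $\ast$-homomorphism sending $W(\xi_i) \mapsto (u_m(\xi_i))_\omega$. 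So the left side of \eqref{int-eqn} applied to $W(\xi)$ is $p(\,(P_n(u_m(\xi_1)\cdots u_m(\xi_d)))_\omega\,)p$ once we know $P_n$ commutes (on the nose, for each fixed $m$) with the product structure in the relevant way.

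The key computational step is therefore: for fixed $m$, expand $u_m(\xi_1)\cdots u_m(\xi_d) \in \Gamma_{q_0}(\R^m)\,\overline{\otimes}\,\Gamma_{q_1}(\HH\otimes\C^m)$ as $m^{-d/2}\sum_{k:[d]\to[m]} W(e_{k(1)})\cdots W(e_{k(d)}) \otimes W(\xi_1\odot e_{k(1)})\cdots W(\xi_d\odot e_{k(d)})$, use the Wick formula to rewrite each factor $W(e_{k(1)})\cdots W(e_{k(d)})$ and $W(\xi_1\odot e_{k(1)})\cdots W(\xi_d\odot e_{k(d)})$ as a sum of Wick words of various lengths, and track how the length of the resulting Wick word in the $q$-Gaussian factor relates to the length in the $q$-Araki-Woods factor after applying $P_n$. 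The crucial point — and this is where I expect the real work to be — is a "length bookkeeping" lemma: in the product $W(e_{k(1)})\cdots W(e_{k(d)})$, when one contracts pairs via the Wick formula, a pair $(r,t)$ can only be contracted if $e_{k(r)} = e_{k(t)}$, i.e. $k(r)=k(t)$; the same set of admissible contractions governs the $\HH\otimes\C^m$ factor (since the $\C^m$ coordinate must also match), so the length of the surviving Wick word is the same in both tensor legs. Consequently $(P_n\otimes\op{Id})$ applied to the product picks out exactly the terms where the common surviving length is $n$, and after taking $m\to\omega$ and comparing with the state computation in the proof of Theorem \ref{Th:Embedding}, this matches $\Theta(P_n W(\xi))$. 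One must be a little careful: $P_n$ on the left acts on $\Gamma_q(\HH)$, and $P_n W(\xi)$ for $\xi=\xi_1\odot\cdots\odot\xi_d$ with $\xi_i\in\HH_\R$ is a genuine sum of lower-length Wick words, so one should reduce to the case $\xi_i$ linearly independent or argue via multilinearity, polarization, and the fact that both sides are multilinear and symmetric in an appropriate sense.

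The cleanest way to organize the argument is probably to first establish it for $\xi\in\HH_\C^{\odot n}$ of the special form where $P_n W(\xi) = W(\xi)$ modulo lower terms is transparent — e.g. take $\xi = \xi_1\odot\cdots\odot\xi_n$ with the $\xi_i$ part of an orthonormal-like family — and then bootstrap: write $W(\xi)$ for general $\xi\in\HH_\C^{\odot d}$ using the isomorphism with noncommutative polynomials, decompose into homogeneous Wick components, and apply the established special case componentwise. The main obstacle, to reiterate, is proving rigorously that the "$q_0$-length" and the "$q_1$-length" of the surviving Wick words in the two tensor factors always coincide; this is morally why the factor of $m^{-d/2}$ forces $\sigma = \sigma'$ in the limit (exactly as in the state computation), and making this precise at the level of operators — not just states — before passing to the ultraproduct is the heart of the proof. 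Once that is done, applying $p(\cdot)p$, taking the limit along $\omega$, and invoking normality of $\Theta$ and of $p(P_n\otimes\op{Id})_\omega p\circ\Theta$ to pass from the dense subalgebra to all of $\Gamma_q(\HH)$ finishes the argument.
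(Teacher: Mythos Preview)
Your overall architecture (verify on a w$^\ast$-dense set of Wick words, then extend by normality) matches the paper, and you correctly locate ``the heart of the proof'' in promoting the $\sigma=\sigma'$ phenomenon from states to operators. But the concrete mechanism you propose for this step does not work, and in fact there is a conflation that derails the computation before it starts.

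First, the identity $\pi_\omega(W(\xi)) = (u_m(\xi_1)\cdots u_m(\xi_d))_\omega$ for $\xi=\xi_1\odot\cdots\odot\xi_d$ is \emph{not} a consequence of $\pi_\omega$ being a $\ast$-homomorphism. The homomorphism property gives $\pi_\omega\big(W(\xi_1)\cdots W(\xi_d)\big)=(u_m(\xi_1)\cdots u_m(\xi_d))_\omega$, and $W(\xi_1\odot\cdots\odot\xi_d)\neq W(\xi_1)\cdots W(\xi_d)$ unless the $\xi_i$ are pairwise orthogonal. Determining $\pi_\omega(W(\xi_1\odot\cdots\odot\xi_d))$ is precisely the nontrivial content of the paper's Theorem~\ref{Th:Wickultraproduct}.

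Second, your ``length bookkeeping lemma'' is false as a termwise statement. For a fixed $k:[d]\to[m]$, the Wick expansion of $W(e_{k(1)})\cdots W(e_{k(d)})$ runs over partial pairings $\sigma\le\ker k$, while the expansion of $W(\xi_1\odot e_{k(1)})\cdots W(\xi_d\odot e_{k(d)})$ runs \emph{independently} over partial pairings $\sigma'\le\ker k$. There is no reason for $\sigma=\sigma'$, so the lengths in the two legs need not agree, and $(P_n\otimes\op{Id})$ does \emph{not} pick out a common length. The mismatched terms only disappear after summing over $k$ and passing to the ultraproduct, and proving this requires genuine operator-norm estimates --- exactly what you flag as missing.

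The paper supplies the missing idea: restrict to \emph{injective} $k$ by introducing
\[
W^s(\xi_1\odot\cdots\odot\xi_d)=\Big(m^{-d/2}\sum_{\substack{k:[d]\to[m]\\ \text{injective}}} W(e_{k(1)}\odot\cdots\odot e_{k(d)})\otimes W(\xi_1\odot e_{k(1)})\cdots W(\xi_d\odot e_{k(d)})\Big)_\omega.
\]
For injective $k$ the first leg is automatically a Wick word of length exactly $d$, so $(P_n\otimes\op{Id})_\omega W^s = \delta_{n,d}W^s$ is immediate. The real work is then Theorem~\ref{Th:Wickultraproduct}: an induction on $d$, using the recursion for Wick words together with Khintchine-type bounds (Lemma~\ref{Lem:coefficients}, Proposition~\ref{Prop:remainderbound}), to show $\pi_\omega(W(\xi_1\odot\cdots\odot\xi_d))=W^s(\xi_1\odot\cdots\odot\xi_d)$ --- i.e., that the non-injective contributions vanish in norm as $m\to\infty$. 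This is the operator-level substitute for your bookkeeping lemma, and without it (or an equivalent device) the argument cannot close.
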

There are two things that have to be verified in Theorem \ref{Th:Intertwining}. The first one, which is a routine check, is to prove that $(P_n \otimes \op{Id})_\omega$ (and therefore also the composition $p(P_n \otimes \op{Id})_\omega p$) is a well-defined map on the (Raynaud) ultraproduct $\mathsf A$. Using Theorem \ref{Th:Polynomial bound}, we can show that $(P_n \otimes \op{Id})_\omega$ is well defined on the $C^{\ast}$-ultraproduct $\widetilde{\mathsf A} \subset \mathsf A$. To conclude, we have to verify that it extends to a normal map on $\mathsf A$. Since we are dealing with the Raynaud ultraproduct, the predual of our ultraproduct is equal to the Banach space ultraproduct of preduals. On each level we can take the predual map of $(P_n \otimes \op{Id})_{m \in \N}$ and use this sequence to obtain a map $\Psi$ on the ultraproduct of $L^{1}$-spaces, the predual of the ultraproduct. The dual of $\Psi$ coincides with $(P_n \otimes \op{Id})_\omega$ on the $C^{\ast}$-ultraproduct, hence it is its unique normal extension. A similar argument is presented, for instance, in \cite[Lemma 3.4]{MR2822210}.

The second step in proving Theorem \ref{Th:Intertwining} is to understand the images of Wick words under the $\ast$-homomorphism $\pi_{\omega}:\widetilde{\Gamma}_q(\HH) \to \mathsf A$. To accomplish this, for any $d \in \N$ and $\xi_1,\dots,\xi_d \in \mathsf{H}_{\C}$, we define elements $W^{s}(\xi_1\odot \dots \odot \xi_d) \in \mathsf A$ by setting
\[
W^{s}(\xi_1\odot \dots \odot \xi_d) := \left(m^{-\frac{d}{2}} \sum_{\substack{k\from [d] \to [m] \\ \textrm{injective}}} W(e_{k(1)})\dots W(e_{k(d)}) \otimes W(\xi_1 \odot e_{k(1)})\dots W(\xi_d \odot e_{k(d)})\right)_\omega.
\]
Because we are summing over distinct indices, the vectors $e_{k(1)},\dots, e_{k(d)}$ are pairwise orthogonal, so $W(e_{k(1)})\dots W(e_{k(d)}) = W(e_{k(1)} \odot \dots \odot e_{k(d)})$. One can then use the Khintchine inequality (Corollary \ref{cor:NCKhintchine}) to prove that the sequence defining $W^{s}(\xi_1\odot \dots \odot \xi_d)$ is uniformly bounded, hence defines a legitimate element of the ultraproduct. We will not give more details here because in the next proposition we show that $\pi_{\omega}(W(\xi_1\odot \dots \odot \xi_d)) = W^{s}(\xi_1\odot \dots \odot \xi_d)$, so it definitely is an element of the ultraproduct.
\begin{thm}\label{Th:Wickultraproduct}
Let $\xi_1,\dots,\xi_d \in \mathsf{H}_{\C}$. Let $\pi_{\omega}$ be as in Theorem \ref{Th:Embedding}. Then $\pi_{\omega}(W(\xi_1\odot \dots \odot \xi_d)) = W^{s}(\xi_1\odot \dots \odot \xi_d)$.
\end{thm}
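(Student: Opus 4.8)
The plan is to induct on $d$. For $d\le 1$ the identity is immediate: $\pi_\omega(W(\xi))=(u_m(\xi))_\omega=W^s(\xi)$ straight from the definitions of $\pi_\omega$ and $W^s$. For the inductive step I would start from the elementary Wick recursion in $\Gamma_q(\HH)$ — obtained by applying both sides to $\Omega$, using $W(\xi_1)=a_q^*(\xi_1)+a_q(I\xi_1)$ and the fact that $\Omega$ is separating —
\[
W(\xi_1\odot\dots\odot\xi_d)=W(\xi_1)\,W(\xi_2\odot\dots\odot\xi_d)-\sum_{j=2}^d q^{j-2}\langle I\xi_1|\xi_j\rangle_U\,W(\xi_2\odot\dots\widehat{\xi_j}\dots\odot\xi_d).
\]
Applying the $\ast$-homomorphism $\pi_\omega$, recalling $\pi_\omega(W(\xi_1))=(u_m(\xi_1))_\omega$, and invoking the inductive hypothesis at lengths $d-1$ and $d-2$, the claim reduces to the ``symmetrized Wick recursion''
\[
(u_m(\xi_1))_\omega\cdot W^s(\xi_2\odot\dots\odot\xi_d)=W^s(\xi_1\odot\dots\odot\xi_d)+\sum_{j=2}^d q^{j-2}\langle I\xi_1|\xi_j\rangle_U\,W^s(\xi_2\odot\dots\widehat{\xi_j}\dots\odot\xi_d)
\]
as an identity in $\mathsf A$.

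To establish this I would work at the level of bounded representative sequences. I would multiply $u_m(\xi_1)=\tfrac1{\sqrt m}\sum_{l=1}^m W(e_l)\otimes W(\xi_1\odot e_l)$ into the defining sequence of $W^s(\xi_2\odot\dots\odot\xi_d)$ and split the resulting sum $m^{-d/2}\sum_{l,k'}(\cdots)$ according to whether the new index $l$ lies outside $\{k'(2),\dots,k'(d)\}$ or equals some $k'(j_0)$. The ``disjoint'' part is, term for term, precisely the defining sequence of $W^s(\xi_1\odot\dots\odot\xi_d)$, since $(l,k')\mapsto(l,k'(2),\dots,k'(d))$ parametrizes the injective multi-indices $[d]\to[m]$. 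For the ``collision'' part, for each $j_0$ I would expand both tensor legs using the Wick recursion with a single-vector first factor (together with the fact that $e_{k'(2)},\dots,e_{k'(d)}$, and hence the $\xi_j\odot e_{k'(j)}$, are pairwise orthogonal, so the relevant products of Wick words collapse to single Wick words). The decisive bookkeeping is that the ``fully contracted'' cross-term of this expansion produces the scalar $q_0^{j_0-2}q_1^{j_0-2}=q^{j_0-2}$ and, up to the harmless factor $\tfrac{m-d+2}{m}\to1$, reconstructs exactly $q^{j_0-2}\langle I\xi_1|\xi_{j_0}\rangle_U\,W^s(\xi_2\odot\dots\widehat{\xi_{j_0}}\dots\odot\xi_d)$, matching the right-hand side of the symmetrized recursion.

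What then remains are the other cross-terms, each of the form $m^{-d/2}\sum_{k'\colon\{2,\dots,d\}\to[m]\text{ inj}}W(v_{k'})\otimes c_{k'}$, where either $v_{k'}$ (in a symmetric power of $\R^m$) or $c_{k'}$ (a Wick word in $\Gamma_{q_1}(\HH\otimes\C^m)$) uses the basis vector $e_{k'(j_0)}$ with multiplicity two, so its $d$ tensor legs involve only $d-1$ distinct basis labels. I would bound the norm of such a term by applying Nou's non-commutative Khintchine inequality (Corollary~\ref{cor:NCKhintchine}) on whichever of the two tensor factors carries the collision, passing to the column--Haagerup--row picture (Proposition~9.3.4 of~\cite{MR1793753}), and then estimating the resulting operator norm by a counting argument: the two legs $I_1,I_2$ of the Khintchine decomposition together see all $d-1$ distinct labels (with a strict surplus off the collision), so the $m^{-d/2}$ normalization against the $\sim m^{d-1}$ summands leaves a net factor $O(m^{-1/2})$; hence each such term vanishes along $\omega$. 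Collecting everything yields $\pi_\omega(W(\xi_1\odot\dots\odot\xi_d))=W^s(\xi_1\odot\dots\odot\xi_d)$.

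I expect this last step to be the main obstacle. The naive triangle-inequality bound over the $\sim m^{d-1}$ summands is hopelessly lossy — it would give a positive power of $m$ once $d\ge 3$ — and the estimate succeeds only because one must replace $\sum_{k'}\||\eta_{k'}\rangle\langle\eta_{k'}|\|$-type bounds by the true operator bound $\|\sum_{k'}|\eta_{k'}\rangle\langle\eta_{k'}|\|=O(1)$, which rests on the fact that two simple tensors of basis vectors indexed by injective multi-indices have non-zero overlap for only boundedly many partners. One also has to verify that the coefficients $c_{k'}$ appearing on the ``other'' tensor factor are norm-bounded uniformly in $m$ — which again follows from Corollary~\ref{cor:NCKhintchine}, since their norms depend on the basis vectors only through $\{0,1\}$-valued inner products.
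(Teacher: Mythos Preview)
Your proposal is correct and follows essentially the same route as the paper: induction on $d$ via the Wick recursion, splitting $(u_m(\xi_1))_\omega\cdot W^s(\xi_2\odot\dots\odot\xi_d)$ into an injective part and collision parts, expanding each collision via the one-step Wick identity in both tensor legs to isolate the $q^{\,j_0-2}\langle I\xi_1|\xi_{j_0}\rangle_U\,W^s(\cdots)$ contribution, and then killing the three remaining cross-terms per $j_0$ with Nou's Khintchine inequality combined with a Hilbert--Schmidt/counting bound (this is exactly the content of Lemma~\ref{Lem:coefficients} and Proposition~\ref{Prop:remainderbound} in the paper). You are even slightly more careful than the paper about the harmless $\tfrac{m-d+2}{m}$ over-counting factor in the ``good'' collision term.
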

\begin{proof}
We proceed by induction on $d \in \N_0$. The base cases $d=0,1$ are obvious from the definitions.  Now assume that the claimed formula is true for all lengths $0 \le d' \le d$, and consider the $d+1$ case.  Fix $\xi_0, \xi_1, \ldots, \xi_d \in \mathsf{H}_{\C}$.  It then follows from Proposition \ref{prop:Wick} that the following relation
holds.
\[
W(\xi_0\odot \dots \odot \xi_d) = W(\xi_0)W(\xi_1\odot \dots \odot \xi_d)- \sum_{l=1}^d q^{l-1}\braket{I\xi_0}{\xi_l}_U W(\xi_1 \odot \ldots \odot \widehat{\xi_l} \odot \ldots \odot \xi_d),
\]
where, as usual, $\widehat{\xi_l}$ means that the tensor factor $\xi_l$ is deleted from the simple tensor under consideration.
Applying $\pi_{\omega}$ to this relation and using our induction hypothesis, we have
\begin{equation}\label{Form:Wick}
\pi_{\omega}(W(\xi_0\odot \ldots \odot\xi_d)) = W^{s}(\xi_0)W^{s}(\xi_1 \odot \ldots\odot \xi_d) - \sum_{l=1}^d q^{l-1}\braket{I\xi_0}{\xi_l}_U W^s(\xi_1\odot \ldots \odot \widehat{\xi_l} \odot \ldots \odot \xi_d).
\end{equation}
Next, we expand the first term on the right-hand side in the above equation:
\begin{align*}
&W^s(\xi_0)W^s(\xi_1\odot \dots \odot \xi_d)\\
&=\left(m^{-\frac{1}{2}}\sum_{k(0)=1}^m W(e_{k(0)}) \otimes W(\xi_0 \odot e_{k(0)})\right)_\omega \\
&\times  \left(m^{-\frac{d}{2}}\sum_{\substack{k\from [d]\to [m] \\ \textrm{injective}}}W(e_{k(1)})\dots W(e_{k(d)}) \otimes W(\xi_1 \odot e_{k(1)})\dots W(\xi_d \odot e_{k(d)})\right)_\omega\\
&= \left(m^{-\frac{d+1}{2}}\sum_{\substack{k\from [d]_{0}\to [m] \\ \textrm{injective}}}W(e_{k(0)})\dots W(e_{k(d)}) \otimes W(\xi_0 \odot e_{k(0)})\ldots W(\xi_d \odot e_{k(d)})\right)_\omega \\
&+ \left(m^{-\frac{d+1}{2}}\sum_{k(0)=1}^m \sum_{l=1}^d \sum_{\substack{k\from[d] \to [m] \\ \textrm{injective} \\ k(l)=k(0)}}W(e_{k(0)})\dots W(e_{k(d)}) \otimes W(\xi_0 \odot e_{k(0)})\ldots W(\xi_d \odot e_{k(d)})\right)_\omega \\
&=W^s(\xi_0 \odot \xi_1 \odot \dots \odot \xi_d) \qquad (\text{this is the first term in the preceding sum}) \\
&+ \sum_{l=1}^d\left(m^{-\frac{d+1}{2}}\sum_{k(0)=1}^m \sum_{\substack{k\from[d]\to[m]\\ \textrm{injective} \\  k(l)=k(0)}}W(e_{k(0)})\ldots W(e_{k(d)}) \otimes W(\xi_0 \odot e_{k(0)})\dots W(\xi_d \odot e_{k(d)})\right)_\omega. 
\end{align*}
The first term is already a part of what we wanted, but we also have to deal with the second term. Note that for $k(0)=k(l)$ and $k(1)\neq \dots \neq k(d)$ we have 
\begin{equation}\label{Form:Wickmult1}
W(e_{k(0)})\dots W(e_{k(d)}) = W(e_{k(0)} \odot \dots \odot e_{k(d)}) + q_0^{l-1} W(e_{k(0)})\dots \widehat{W(e_{k(l)})} \dots W(e_{k(d)})
\end{equation}
and
\begin{align}\label{Form:Wickmult2}
&W(\xi_0 \odot e_{k(0)}) \dots  W(\xi_d \odot e_{k(d)})) \\
&=W((\xi_0\odot e_{k(0)})\odot\dots \odot (\xi_d \odot e_{k(d)}))\notag \\
&+ \braket{I\xi_0 \odot e_{k(0)}}{\xi_l\odot e_{k(l)}}_U q_1^{l-1} W(\xi_0 \odot e_{k(0)})\dots\widehat{W(\xi_l \odot e_{k(l)})}\dots W(\xi_d \odot e_{k(d)}) \notag\\
&= W((\xi_0\odot e_{k(0)})\odot\dots \odot (\xi_d \odot e_{k(d)})) \notag\\
&+ \braket{I\xi_0}{\xi_l}_U q_1^{l-1} W(\xi_0 \odot e_{k(0)})\dots\widehat{W(\xi_l \odot e_{k(l)})}\dots W(\xi_d \odot e_{k(d)}).\notag
\end{align}
Indeed, if $(v_1,\dots,v_n) \subseteq \HH_\C$ is a family of orthogonal vectors then $W(v_1)\dots W(v_n) = W(v_1\odot\dots\odot v_n)$, as we remarked earlier. In our case we have a sequence $(w,v_1,\dots, v_d)$, where $Iw$ is orthogonal to all vectors $v_j$ for $j\neq l$, so we get 
\begin{align*}
W(w)W(v_1)\dots W(v_d)\Omega &= (a^{\ast}(w) + a(Iw)) v_1\odot \dots \odot v_d \\
&= w \odot v_1\odot \dots \odot v_d + a(Iw) (v_1 \odot  \dots \odot v_d) \\
&= w \odot v_1 \odot \dots \odot v_d + q_1^{l-1} \braket{Iw}{v_l}_U v_1 \odot \dots \odot \widehat{v_l} \odot  \dots \odot v_d, 
\end{align*}
hence the formula above. Tensoring $W(e_{k(0)})\dots W(e_{k(d)})$ with $ W(\xi_0 \odot e_{k(0)}) \dots  W(\xi_d \odot e_{k(d)})$ (keeping in mind that $q_0q_1=q$) gives us four terms, one of which is
\[
q^{l-1}\braket{I\xi_0}{\xi_l}_U W(e_{k(1)})\dots\widehat{W(e_{k(l)})}\dots W(e_{k(d)})\otimes W(\xi_0 \odot e_{k(0)})\dots\widehat{W(\xi_l \odot e_{k(l)})}\dots W(\xi_d \odot e_{k(d)})
\]
and we will deal with the three other terms later. To these expressions we need to apply the sum $\sum_{l=1}^{d}m^{-\frac{d+1}{2}}\sum_{k(0)=1}^m \sum_{\substack{k\from [d]\to[m]\\ \textrm{injective} \\ k(l)=k(0)}}$. Since $k(l)$ and $k(0)$ are omitted, we can forget about the condition $k(l)=k(0)$ and perform the sum over $k(0)$ immediately, resulting in a sum $\sum_{l=1}^{d}m^{-\frac{d-1}{2}}\sum_{\substack{k\from [d]\setminus\{l\}\to [m] }}$. Without the sum over $l$, this is the sum over $d-1$ distinct indices appearing in the definition of $W^s$, so we get the sum
\[
\sum_{l=1}^{d}q^{l-1}\braket{I\xi_0}{\xi_l}_U W^{s}(\xi_0 \odot \dots \odot \widehat{\xi_l} \odot  \dots \odot  \xi_d). 
\]
To sum up, we have checked so far that
\[
W^{s}(\xi_{0})W^{s}(\xi_1\odot \dots \odot \xi_d) = W^s(\xi_0\odot \dots\odot \xi_d) + \sum_{l=1}^{d} q^{l-1} \braket{I\xi_0}{\xi_l}_UW^{s}(\xi_0 \odot \dots \odot \widehat{\xi_l} \odot \dots \odot \xi_d) + R,
\]
where $R$ is the ``remainder'' term that will turn out to be a zero element of the ultraproduct. Inserting this into \eqref{Form:Wick} we get that
\[
\pi_{\omega}(W(\xi_1\odot \dots \odot \xi_d)) = W^{s}(\xi_1\odot \dots \odot \xi_d) + R,
\]
so if we can check that $R$ is really a zero element then this ends the proof. 
\end{proof}
Let us just recall that $R$ comes from the three neglected so far terms arising from tensoring $W(e_{k(0)})\dots W(e_{k(d)})$ with $ W(\xi_0 \odot e_{k(0)}) \dots  W(\xi_d \odot e_{k(d)})$. It can be written as 
\[R = \left(m^{-\frac{d+1}{2}}\sum_{l=1}^{d} (R_{1,l}(m) + \braket{I\xi_0}{ \xi_l} q_1^{l-1}R_{2,l}(m) + q_{0}^{l-1}R_{3,l}(m))\right)_\omega,
\]
where:
\[
R_{1,l}(m)= \sum_{k(0)=1}^m \sum_{\substack{k\from[d]\to [m]\\\textrm{injective} \\ k(l)=k(0)}}W(e_{k(0)} \odot \ldots \odot e_{k(d)})\otimes W((\xi_0 \odot e_{k(0)})\odot \ldots\odot (\xi_d \odot e_{k(d)})),
\]
\[
R_{2,l}(m) = \sum_{k(0)=1}^m  \sum_{\substack{k\from[d] \to [m]\\ \textrm{injective} \\ k(l)=k(0)}}W(e_{k(0)} \odot \ldots \odot e_{k(d)}) \otimes  W(\xi_1 \odot e_{k(1)})\odot \widehat{W(\xi_l \odot e_{k(l)})} \odot W(\xi_d \odot e_{k(d)}),
\]
and
\[
R_{3,l}(m) = \sum_{k(0)=1}^m \sum_{\substack{k\from [d] \to [m]\\ \textrm{injective} \\ k(l)=k(0)}}W(e_{k(1)})\ldots\widehat{W(e_{k(l)})} \ldots W(e_{k(d)}) \otimes  W((\xi_0 \odot e_{k(0)})\odot \ldots\odot (\xi_d \odot e_{k(d)})).
\]
Recall the formulas \eqref{Form:Wickmult1} and \eqref{Form:Wickmult2}. After tensoring the right-hand sides we get four terms, one of which was already incorporated in the proof of Theorem \ref{Th:Wickultraproduct}. The other three are:
\[
W(e_{k(0)} \odot \ldots \odot e_{k(d)}) \otimes W((\xi_0 \odot e_{k(0)})\odot \ldots\odot (\xi_d \odot e_{k(d)})),
\]
\[
q_1^{l-1} \braket{I\xi_0}{\xi_l}W(e_{k(0)} \odot \ldots\odot e_{k(d)}) \otimes  W(\xi_1 \odot e_{k(1)})\ldots \widehat{W(\xi_l \odot e_{k(l)})} \ldots W(\xi_d \odot e_{k(d)}),
\]
and
\[
q_0^{l-1} W(e_{k(1)})\ldots\widehat{W(e_{k(l)})} \ldots W(e_{k(d)}) \otimes  W((\xi_0 \odot e_{k(0)})\odot \ldots\odot (\xi_d \odot e_{k(d)})).
\]
To obtain $R$, we just need to take sums over appropriate sets of indices.

We will now examine properties of $R$. Since $q_0,q_1$, and  the range of summation over $l$ is fixed, to show that $R$ is a zero element in the ultraproduct, it suffices to show that $\lim_{m\to\infty} m^{-\frac{d+1}{2}}\|R_{i,l}\|=0$ for any $1 \le  i \le 3$ and $l \in [d]$. We will use Nou's noncommutative Khintchine inequality for this (Corollary \ref{cor:NCKhintchine}), but before that we need to obtain a bound for the coefficients.
\begin{lem}\label{Lem:coefficients}
There exists a constant $D(d) > 0$ (depending only on the initial choice of $\xi_1, \ldots, \xi_d \in \HH_\C$) such that for all $m \in \N$ and all $k:[d] \to [m]$, the following inequalities hold:
\begin{align*}
\|W((\xi_{0} \odot e_{k(0)})\odot \dots\odot (\xi_d \odot e_{k(d)}))\| &\leqslant D(d) \\
 \|W(\xi_1 \odot e_{k(1)})\dots\widehat{W(\xi_l \odot e_{k(l)})} \ldots W(\xi_d \odot e_{k(d)})\| &\leqslant D(d) \\
\|W(e_{k(1)})\dots\widehat{W(e_{k(l)})} \dots W(e_{k(d)})\| &\leqslant D(d).
\end{align*}

\end{lem}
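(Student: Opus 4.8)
The plan is to note that each of the three operators in the statement is a product of at most $d+1$ \emph{single-letter} operators --- $q$-creation and $q$-annihilation operators --- whose arguments have norm controlled entirely by the fixed vectors $\xi_0,\dots,\xi_d$, and then to finish by the triangle inequality together with the elementary estimate $\|a_{q'}(v)\| = \|a_{q'}^{\ast}(v)\| \le c(q')\|v\|$, where $c(q') := \max\{1,(1-q')^{-1/2}\} < \infty$ for $q' \in (-1,1)$ (this is precisely the bound recalled just before the definition of $\Gamma_q(\HH)$). I stress that nothing deeper is needed at this step; in particular Nou's Khintchine inequality (Corollary \ref{cor:NCKhintchine}) does \emph{not} enter here, only afterwards, when one bounds the full sums $m^{-(d+1)/2}\|R_{i,l}(m)\|$.

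First I would pin down the size of the building blocks. Fix $m$ and an injective $k\from [d] \to [m]$. Because the orthonormal basis $e_1,\dots,e_m$ of $\R^m$ is real, the complex conjugation on $\HH \otimes \C^m$ acts by $\xi \odot e_j \mapsto (I\xi) \odot e_j$; since the inner product on $\HH \otimes \C^m$ is a tensor product, this gives $\|\xi_j \odot e_{k(j)}\| = \|\xi_j\|$, $\|I(\xi_j \odot e_{k(j)})\| = \|I\xi_j\|$ and $\|e_{k(j)}\| = \|Ie_{k(j)}\| = 1$ for every $j$. Although $I$ is generally unbounded on $\HH$, each $\|I\xi_j\|$ is a \emph{finite} number fixed by the choice of $\xi_j$, so I may put $\Xi := \max_{0 \le j \le d}\max\{\|\xi_j\|,\|I\xi_j\|,1\} < \infty$ and allow $D(d)$ to depend on $\Xi$.

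For the first inequality I would expand the length-$(d+1)$ Wick word $W\big((\xi_0 \odot e_{k(0)}) \odot \cdots \odot (\xi_d \odot e_{k(d)})\big)$ inside $\Gamma_{q_1}(\HH \otimes \C^m)$ by the Wick formula of Proposition \ref{prop:Wick}: it is a sum of $\sum_{s=0}^{d+1}\binom{d+1}{s} = 2^{d+1}$ monomials, each a product of $d+1$ creation/annihilation letters with arguments among the $\xi_j \odot e_{k(j)}$ and their conjugates, multiplied by a power of $q_1$ of modulus $\le 1$; by the triangle inequality each monomial has norm $\le (c(q_1)\Xi)^{d+1}$, hence the word has norm $\le 2^{d+1}(c(q_1)\Xi)^{d+1}$, uniformly in $m$ and $k$. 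For the second and third inequalities I would instead invoke submultiplicativity of the operator norm together with the identity $W(v) = a_{q_1}^{\ast}(v) + a_{q_1}(Iv)$ (the $n=1$ case of Proposition \ref{prop:Wick}), which yields $\|W(\xi_i \odot e_{k(i)})\| \le c(q_1)(\|\xi_i\| + \|I\xi_i\|) \le 2c(q_1)\Xi$, and likewise $\|W(e_{k(i)})\| = \|s_{q_0}(e_{k(i)})\| \le 2c(q_0)$ in $\Gamma_{q_0}(\R^m)$; multiplying the $d-1$ surviving factors gives the bounds $(2c(q_1)\Xi)^{d-1}$ and $(2c(q_0))^{d-1}$ respectively. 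Taking $D(d) := \max\{\,2^{d+1}(c(q_1)\Xi)^{d+1},\ (2c(q_1)\Xi)^{d-1},\ (2c(q_0))^{d-1}\,\}$ then establishes all three inequalities.

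The only genuine subtlety --- and I would flag it as the main point to get right --- is the unboundedness of the complex conjugation $I$ on $\HH$: there is no hope of a bound uniform over all unit vectors, so $D(d)$ must be permitted to depend on the particular vectors $\xi_0,\dots,\xi_d$, entering through the finite quantities $\|I\xi_j\|$, rather than merely on a bound for their norms; this is exactly the dependence the statement allows. Everything else is bookkeeping with the Wick formula and the norm estimates for the creation and annihilation operators. (Should one prefer, all three inequalities also follow from Corollary \ref{cor:NCKhintchine} applied with scalar coefficients, after bounding the $q'$-Fock norm of a simple tensor by $\|v_1 \odot \cdots \odot v_r\|_{q'}^2 \le \|P_{q'}^r\| \prod_i \|v_i\|^2 \le (1-|q'|)^{-r}\prod_i\|v_i\|^2$ using \eqref{form:qinnerproduct}; the direct expansion above is shorter.)
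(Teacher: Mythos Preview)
Your argument is correct. For the second and third inequalities it coincides with the paper's: bound each single-letter Wick word by $\|W(v)\| \le \|a_{q'}^*(v)\| + \|a_{q'}(Iv)\| \le 2(1-|q'|)^{-1/2}\max\{\|v\|,\|Iv\|\}$ and multiply. Your handling of the unboundedness of $I$ is exactly the point the paper also highlights: since $\xi_0,\dots,\xi_d$ are fixed, the numbers $\|I\xi_j\|$ are finite constants.

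For the first inequality your route differs genuinely from the paper's. You expand $W((\xi_0\odot e_{k(0)})\odot\cdots\odot(\xi_d\odot e_{k(d)}))$ directly via the Wick formula (Proposition~\ref{prop:Wick}) as a sum of $2^{d+1}$ creation/annihilation monomials and apply the triangle inequality. The paper instead invokes Corollary~\ref{cor:NCKhintchine} with $\mathsf K=\C$ and then bounds the resulting $(\mathds{1}_{d-l}\odot\mathcal{I})R^*_{d,l}$ terms by writing them as finite sums of simple tensors. So your remark that ``Nou's Khintchine inequality does \emph{not} enter here'' is true of your proof but not of the paper's own proof of this lemma. Your approach is more elementary and gives an explicit constant; the paper's approach is consistent with its systematic use of the Khintchine framework and is essentially the alternative you sketch in your closing parenthetical. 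One minor point: you write ``fix an injective $k$'', but nothing in your argument uses injectivity, and the lemma is stated for all $k:[d]\to[m]$; you may simply drop that word.
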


\begin{proof}
The second and third inequality will follow if we can show that there is a constant $D>0$ such that $\|W(\xi_r \odot  e_{k(r)})\|,  \|W(e_{k(r)})\| \leqslant D$ (independently of $r \in [d]$). But the existence of $D$ follows from the simple fact for any $q$-Araki-Woods algebra $\Gamma_q(\HH)$ and $\xi \in \HH_\C$, we have  $\|W(\xi)\|_{\Gamma_q(\HH)} \leqslant \|a_q^*(\xi)\| +\|a_q(I\xi)\| \le 2(1 - |q|)^{-1/2}\max\{\|\xi\|, \|I\xi\|\}$.  
Now consider the first inequality. By the Khintchine inequality with $\mathsf{K}=\C$ (Corollary \ref{cor:NCKhintchine}), the left-hand side is bounded by \[C(q_1)(d+1)\max_{0 \le l \le d} \|(\mathds{1}_{d-l} \odot \mathcal{I})(R_{d,l}^{\ast}((\xi_{0} \odot e_{k(0)})\odot \dots\odot (\xi_d \odot e_{k(d)})))\|.\] Writing the above $(\mathds{1}_{d-l} \odot \mathcal{I})R_{d,k}^{\ast}$ terms as sums of simple tensors, one easily sees that the corresponding norms are bounded by a constant depending only on $d$.  (Note that the unboundedness of $\mc I$ plays no role here, as $\xi_0, \ldots, \xi_d \in \HH_\C$ remain fixed.)
% (each vector appearing in the tensor has norm bounded by some definite constant). The coefficients in the sum are summable, so the inequality follows. This is, in fact, not needed, since the number of summands depends only on $d$.
\end{proof}
We need one more proposition.   In the following, $m \in \N$ and $\xi_0, \ldots, \xi_d$ are fixed as usual.   Let $I_{l}$ denote the set of indices $(k(0),\dots, k(d)) \in [m]^{d+1}$ that are pairwise distinct except for the pair $(k(0),k(l))$; a generic element of $I_{l}$ will be called $\textbf{i}$ and the corresponding tensor $e_{k(0)}\otimes \dots\otimes e_{k(d)}$ will also be denoted by $\textbf{i}$. We will denote $W(e_{k(0)}\otimes \dots \otimes e_{k(d)})$ by $W_{\textbf{i}}$ and $W((\xi_{0} \odot e_{k(0)})\otimes\dots\otimes(\xi_{d} \odot e_{k(d)}))$ by $W_{\textbf{i}}^{\xi}$.
\begin{prop}\label{Prop:remainderbound}
Given any Hilbert space $\mathsf K$ and any family of operators $(A_{\textbf{i}})_{\textbf{i} \in I_l} \subset \op B(\mathsf K)$, the following inequalities hold.
\begin{align*}
\|\sum_{I_l} A_{\textbf{i}} \otimes W_{\textbf{i}}\| &\leqslant C(d) \sup_{\textbf{i} \in I_{l}} \|A_{\textbf{i}}\| m^{\frac{d}{2}}  \\
\|\sum_{I_l} A_{\textbf{i}} \otimes W_{\textbf{i}}^{\xi}\| &\leqslant C(d) \sup_{\textbf{i} \in I_{l}} \|A_{\textbf{i}}\| m^{\frac{d}{2}},
\end{align*}
where $C(d) > 0$ depends only on $d$ and the choice of vectors $\xi_0,\xi_1,\dots,\xi_d \in \HH_\C$. 
%In particular, if the norms of the operators $A_{i}$ are uniformly bounded (in $m$) then the norms above have an upper bound of the form $D(d) m^{\frac{d}{2}}$.  
\end{prop}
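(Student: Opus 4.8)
The plan is to feed each sum into Nou's non-commutative Khintchine inequality (Corollary~\ref{cor:NCKhintchine}) --- which bounds the (min-tensor) norm of a length-$(d+1)$ Wick word by operator norms of associated column/row objects --- and then to reduce everything to a combinatorial estimate for a sum of rank-one operators indexed by $I_l$. The whole point will be that the single coincidence $k(0)=k(l)$ forced on the multi-indices in $I_l$ costs exactly one factor of $m$, which is what sharpens the naive bound $m^{(d+1)/2}$ to the claimed $m^{d/2}$.

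First I would carry out some routine reductions. If some $\xi_r=0$ then $W_{\mathbf i}^{\xi}=0$ for every $\mathbf i$ and the second inequality is trivial, so we may assume every $\xi_r\neq 0$. Using the identification $\HH_c\otimes_h\CK_r\simeq\op{K}(\K,\HH)$ recorded earlier, I identify $\big(\HH_{q_0}^{\otimes p}\big)_c\otimes_h\big(\CH_{q_0}^{\otimes r}\big)_r$ with $\op{K}\big(\HH_{q_0}^{\otimes r},\HH_{q_0}^{\otimes p}\big)$, and likewise with $\HH\otimes\C^m$ in place of $\R^m$; after tensoring with $\op{B}(\mathsf K)$ the norms in Corollary~\ref{cor:NCKhintchine} become honest operator norms. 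Finally, since the operators $P_{q_0}^{n},P_{q_1}^{n}$ on the relevant $n$-fold tensor powers ($n\le d+1$) are bounded above and below by strictly positive constants depending only on $q_0$ (resp.\ $q_1$) and $d$, uniformly in the underlying Hilbert space, I may replace each $q$-deformed inner product appearing below by the undeformed one at the cost of a constant depending only on $q_0,q_1$ and $d$ --- and, crucially, \emph{not} on $m$. In the resulting flat picture the building blocks are: for the $W_{\mathbf i}$ term, the orthonormal basis tensors $e_{\mathbf i|_{I_1}}:=\bigotimes_{r\in I_1}e_{k(r)}$ (tensor factors in increasing order of $r$), two of which coincide exactly when the restrictions $\mathbf i|_{I_1}$ agree; and for the $W_{\mathbf i}^{\xi}$ term, the tensors $\bigotimes_{r\in I_1}(\xi_r\odot e_{k(r)})$, which --- since the vector attached to a fixed position does not depend on $\mathbf i$ --- are pairwise orthogonal of common squared norm $\prod_{r\in I_1}\|\xi_r\|_U^{2}>0$, hence orthonormal after a bounded rescaling.

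Next I would apply Corollary~\ref{cor:NCKhintchine} with $n=d+1$ and coefficients $A_{\mathbf i}$ to bound $\big\|\sum_{I_l}A_{\mathbf i}\otimes W_{\mathbf i}\big\|$ by $C(q_0)(d+2)$ times $\max_{0\le j\le d+1}\big\|\big(\op{Id}\odot(\mathds{1}_{d+1-j}\odot\mc I)R_{d+1,j}^{\ast}\big)(\zeta)\big\|$, where $\zeta=\sum_{\mathbf i\in I_l}A_{\mathbf i}\odot(e_{k(0)}\odot\cdots\odot e_{k(d)})$. Expanding $R_{d+1,j}^{\ast}$ over the at most $2^{d+1}$ splittings $[d]_0=I_1\sqcup I_2$ with $|I_1|=d+1-j$, using $|q_0^{i(I_1,I_2)}|\le1$, the triangle inequality and the compact-operator identification, this reduces matters to bounding, for each fixed $(I_1,I_2)$, the operator norm of
\[
S:=\sum_{\mathbf i\in I_l}A_{\mathbf i}\otimes\ket{e_{\mathbf i|_{I_1}}}\bra{e_{\mathbf i|_{I_2}}}\ \colon\ \mathsf K\otimes\HH^{\otimes|I_2|}\To\mathsf K\otimes\HH^{\otimes|I_1|}.
\]
The very same scheme applies to the $W_{\mathbf i}^{\xi}$ term inside $\Gamma_{q_1}(\HH\otimes\C^m)$, with $e_{\mathbf i|_{I_\bullet}}$ replaced by the normalised $\xi$-decorated tensors and $\mc I$ now built from $I\otimes\overline{(\,\cdot\,)}$ on $\HH\otimes\C^m$; since $\langle\xi_a\odot e_{k(a)},\xi_b\odot e_{k(b)}\rangle=\langle\xi_a,\xi_b\rangle_U\,\delta_{k(a),k(b)}$, the orthogonality structure in the $e$-variables is unchanged and the $\xi$-scalars contribute only bounded constants, which is exactly why $C(d)$ is allowed to depend on $\xi_0,\dots,\xi_d$.

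It remains to prove $\|S\|\le C(d)\,m^{d/2}\sup_{\mathbf i}\|A_{\mathbf i}\|$. For a unit vector $v=\sum_{\beta\colon I_2\to[m]}v_\beta\otimes e_\beta$ one has $Sv=\sum_{\mathbf i\in I_l}(A_{\mathbf i}v_{\mathbf i|_{I_2}})\otimes e_{\mathbf i|_{I_1}}$, and grouping the output by $\alpha:=\mathbf i|_{I_1}$ (which determines $e_{\mathbf i|_{I_1}}$) yields, by Cauchy--Schwarz in the inner sum and then regrouping by $\beta=\mathbf i|_{I_2}$ with $\sum_\beta\|v_\beta\|^{2}\le1$,
\[
\|Sv\|^{2}=\sum_{\alpha}\Big\|\sum_{\substack{\mathbf i\in I_l\\ \mathbf i|_{I_1}=\alpha}}A_{\mathbf i}v_{\mathbf i|_{I_2}}\Big\|^{2}\le N\sup_{\mathbf i}\|A_{\mathbf i}\|^{2}\sum_{\mathbf i\in I_l}\|v_{\mathbf i|_{I_2}}\|^{2}\le NM\,\sup_{\mathbf i}\|A_{\mathbf i}\|^{2},
\]
where $N:=\max_\alpha\#\{\mathbf i\in I_l:\mathbf i|_{I_1}=\alpha\}$ and $M:=\max_\beta\#\{\mathbf i\in I_l:\mathbf i|_{I_2}=\beta\}$. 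The crux is $NM\le m^{d}$: an element of $I_l$ carries only $d$ independent values, namely the shared value $k(0)=k(l)$ together with $d-1$ further distinct values, so a short case analysis on where the pair $\{0,l\}$ sits finishes it. If $\{0,l\}\subseteq I_1$ then fixing $\mathbf i|_{I_2}$ still lets $k(0)$ (and with it $k(l)$) vary, so $M\le m^{|I_1|-1}$, while trivially $N\le m^{|I_2|}$; if $\{0,l\}\subseteq I_2$ the roles of $N,M$ reverse; and if $0$ and $l$ lie in different blocks, then in each of the two restrictions one of the two coordinates is forced by the other, giving $N\le m^{|I_2|-1}$ and $M\le m^{|I_1|-1}$. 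Since $|I_1|+|I_2|=d+1$, in every case $NM\le m^{d}$, hence $\|S\|\le C(d)\,m^{d/2}\sup_{\mathbf i}\|A_{\mathbf i}\|$; collecting the constants from Corollary~\ref{cor:NCKhintchine}, the sum over splittings, the flat-versus-$q$ comparison and (for the second inequality) the $\xi$-rescalings then gives the proposition. I expect this last piece of combinatorial book-keeping --- correctly locating where the coincidence $k(0)=k(l)$ buys the extra power of $m$ --- to be the only genuinely delicate point; everything else is a routine assembly of Corollary~\ref{cor:NCKhintchine} and the Haagerup-tensor/compact-operator identification.
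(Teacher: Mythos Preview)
Your proof is correct and follows the same overall route as the paper: apply Corollary~\ref{cor:NCKhintchine}, expand $R_{d+1,j}^{\ast}$ over splittings $[d]_0=I_1\sqcup I_2$, flatten the $q$-deformed inner products via the bounded operators $(P_q^{n})^{\pm 1/2}$, and invoke the identification $\HH_c\otimes_h\CK_r\simeq\op{K}(\K,\HH)$ to view $S$ as a block operator matrix.

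The only difference lies in the final norm estimate. You bound $\|S\|^{2}$ by $NM\,\sup_{\mathbf i}\|A_{\mathbf i}\|^{2}$ via Cauchy--Schwarz and a case analysis on where $\{0,l\}$ sits relative to $I_1,I_2$, showing $NM\le m^{d}$ (and in fact $NM\le m^{d-1}$ in the split case). The paper instead notes that distinct $\mathbf i\in I_l$ give \emph{distinct} matrix entries --- different pairs $(e_{\mathbf i|_{I_1}},e_{\mathbf i|_{I_2}})$ --- and then uses the block Hilbert--Schmidt bound $\|S\|\le\big(\sum_{\mathbf i\in I_l}\|A_{\mathbf i}\|^{2}\big)^{1/2}$ directly, together with the obvious cardinality estimate $|I_l|\le m^{d}$ (the constraint $k(0)=k(l)$ leaves $d$ free values). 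Your row/column multiplicity argument is a touch more elaborate and extracts a slightly sharper power of $m$ in one case; the paper's is quicker and avoids the case split altogether. Both lead to the same conclusion.
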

\begin{proof}
The proofs of both inequalities are essentially the same. We will deal with the first one; to obtain a proof of the second one has to apply conjugation in some places but since we are dealing with a fixed number of vectors $\xi_0,\dots,\xi_d$, the unboundedness of conjugation does not play any role. By the Khintchine inequality (Corollary \ref{cor:NCKhintchine}) we need to deal with
\[
\max_{0\leqslant k \leqslant d+1}\|\sum_{I_{l}} A_{\textbf{i}} \otimes R_{d+1,k}^{\ast} (\textbf{i})\|,
\]
up to a $d$-dependent constant.
 
Since $R_{d+1,k}^{\ast}$ is a sum of operators that only permute vectors, and the coefficients of this sum are summable, we just need to take care of a single term of the form
\[
\max_{0\leqslant k \leqslant d+1}\|\sum_{I_{l}} A_{\textbf{i}} \otimes \sigma(\textbf{i})_{(d+1,k)}\|,
\]
where $\sigma$ denotes the action of the permutation and the decoration $(d+1,k)$ reminds us of the fact that $\sigma(\textbf{i})$ is viewed now as an element of $\HH_{c}^{\otimes (d+1-k)} \otimes_{h} \HH_{r}^{\otimes k}$. Whatever the $\sigma$, the tensor $\sigma(\textbf{i})_{d+1,k}$ is always of the form $e_{i_{0}} \odot\dots\odot e_{i_{d-k}} \otimes e_{i_{d-k+1}} \odot \dots \odot e_{i_d}$, where for different indices $\textbf{i}$ and $\textbf{i}'$ these tensors are different. The key property that we will need is that we have two orthonormal systems $(v_s)_{s \in S} \subset \mathsf{H}^{\otimes (d+1-k)}$ and $(w_j)_{j \in J} \subset \mathsf{H}^{\otimes k} $ such that for any $\textbf{i} \in I_l$ we have $\sigma(\textbf{i})_{d+1,k} = v_s \otimes w_j$ for some $s\in S$ and $j\in J$. Therefore we can get rid of the sign $\sigma$ and just consider
\[
\max_{0\leqslant k \leqslant d+1}\|\sum_{I_{l}} A_{\textbf{i}} \otimes \textbf{i}_{(d+1,k)}\|,
\]
Since we are dealing with tensor powers of $\mathsf{H}$ equipped with $q$-deformed inner products, we would rather have families $(v_s')_{s\in S}$ and $(w_j')_{j\in J}$ that are orthonormal in $\mathsf{H}_q^{\otimes (d+1-k)} $ and $\mathsf{H}_q^{\otimes k}$, respectively. To achieve this, we will use the operators defining the $q$-deformed inner products, $P_q^{d+1-k}$ and $P_q^k$. Let $\xi(\textbf{i})_{d+1,k}$ be tensors defined by $((P_q^{d+1-k})^{\frac{1}{2}} \otimes (P_q^k)^{\frac{1}{2}})(\xi(\textbf{i})_{d+1,k}) =\textbf{i}_{d+1,k}$. Then we can write $\xi(\textbf{i})_{d+1,k} = v_s' \otimes w_j'$ for some tensors $v_s'$ and $w_j'$ coming from orthonormal families in $\mathsf{H}_q^{\otimes (d+1-k)}$ and $\mathsf{H}_q^{\otimes k}$.  Since the row/column Hilbert spaces are homogeneous operator spaces (and Haagerup tensor product allows tensoring cb maps) we can bound $\max_{0\leqslant k \leqslant d+1}\|\sum_{I_{l}} A_{\textbf{i}} \otimes \textbf{i}_{d+1,k}\|$ by $\max_{0\leqslant k \leqslant d+1}\|\sum_{I_{l}} A_{\textbf{i}} \otimes \xi(\textbf{i})_{d+1,k}\| $, up to a $d$-dependent constant coming from the norms of $(P_q^{d+1-k})^{\frac{1}{2}}$ and $(P_q^k)^{\frac{1}{2}}$. Because we are using the Haagerup tensor product, we have the following completely isometric isomorphism $\mathsf{H}_{c} \otimes_{h} \overline{\mathsf{K}}_{r} \simeq \mathcal{K}(\mathsf{K}, \mathsf{H})$. Under this identification the tensors $\xi(\textbf{i})_{d+1,k}$ correspond to matrix units in $\mathcal{K}(\mathsf{H}_q^{\otimes k}, \mathsf{H}_q^{\otimes (d+1-k)})$. This means that the operators $A_{\textbf{i}}$ fill different entries in a large operator matrix. By comparing the operator norm with the Hilbert-Schmidt norm we get the estimate
\[
\left\|\sum_{\textbf{i}\in I_l} A_{\textbf{i}} \otimes W_{\textbf{i}} \right\| \leqslant C(d) \left(\sum_{\textbf{i} \in I_l} \|A_{\textbf{i}}\|^2\right)^{\frac{1}{2}} \le C(d) \left( |I_l| \sup_{\textbf{i}\in I_l} \|A_{\textbf{i}}\|^2\right)^\frac{1}{2}, 
\]
which can be further bounded by
\[
C(d) \left( m^d \sup_{\textbf{i}\in I_l} \|A_{\textbf{i}}\|^2\right)^\frac{1}{2} = C(d) m^{\frac{d}{2}} \sup_{\textbf{i} \in I_l} \|A_{\textbf{i}}\|.
\]
\end{proof}
Finally, to conclude that $R=0$ in the ultraproduct, we just observe that each  component $R_{i,l} = (R_{i,l}(m))_{m \in \N}$ is a a sequence of terms of the form appearing in Proposition \ref{Prop:remainderbound} with coefficients $(A_{\textbf{i}}(m))_{m \in \N, \textbf{i} \in I_l}$ uniformly bounded in $\textbf{i}$ and $m$ by the constant $D(d)$ from Lemma \ref{Lem:coefficients}, so the norm $m^{-\frac{d+1}{2}} R_{i,l}(m)$ is bounded from above by $C(d)D(d) m^{-\frac{1}{2}}$, and hence tends to zero. This finishes the proof of Theorem \ref{Th:Wickultraproduct}. With this tool at hand, we prove Theorem \ref{Th:Intertwining}.
\begin{proof}[Proof of Theorem \ref{Th:Intertwining}]
Let $W(\xi)$ be a Wick word associated with $\xi \in \mathsf{H}_{\C}^{\odot d}$. Then we easily obtain $\pi_{\omega} (P_n W(\xi)) = \delta_{n,d} W^{s} (\xi)$. On the other hand, let us first apply $\pi_{\omega}$ to obtain $W^{s}(\xi)$. Since, as we already remarked earlier, $W(e_{k(1)})\dots W(e_{k(d)})=W(e_{k(1)}\otimes\dots\otimes e_{k(d)})$, the operators acted on by the $P_n$ part of the operator $(P_n \otimes \op{Id})_\omega$ are exactly of length $n$. Therefore $(P_n \otimes \op{Id})_\omega W^{s}(\xi) = \delta_{n,d} W^{s}(\xi)$.  By linearity, this implies that 
$\pi_\omega  \circ P_n  = (P_n \otimes \op{Id})_\omega \circ \pi_\omega$ on the algebra of Wick words $\widetilde{\Gamma}_q(\HH)$.  Compressing by the support projection $p$, we then obtain \[\Theta \circ P_n = p  (P_n \otimes \op{Id})_\omega \circ \pi_\omega (\cdot)  p = p \ (P_n \otimes \op{Id})_\omega p \circ \Theta \quad \text{on} \quad \widetilde{\Gamma}_q(\HH),\]
where in the second equality we used the fact that $p \in \pi_\omega(\widetilde{\Gamma}_q(\HH))'$ (see \cite[Lemma 4.1]{MR2200739}). Since the desired equality holds on the ultraweakly dense subset $\widetilde{\Gamma}_q(\HH)$, and all maps under consideration are normal, equality holds everywhere. 
\end{proof}

Let us now furnish a proof of the transference result for radial multipliers. 

\begin{proof}[Proof of Theorem \ref{Th:Transference}]
From Theorem \ref{Th:Intertwining} we get that $\Phi\circ\mathsf{m}_{\varphi}( x) = p(\mathsf{m}_{\varphi} \otimes \op{Id})_\omega p \circ \Phi( x)$ for any $x = W(\xi)$ with $\xi \in (\HH_\C)^{ \odot d}$. By linearity we can extend this equality to all $x \in \widetilde{\Gamma}_q(\HH)$. It follows that we have control on the cb norm of $\mathsf m_\varphi$ acting on the norm-closure of finite Wick words, i.e. on the $C^{\ast}$-algebra $\mc A_q(\HH)$. Since $\mathsf m_\varphi$ is automatically normal (cf. \cite[Lemma 3.4]{MR2822210}), it extends to a normal map on $\Gamma_q(\HH)$ with the same cb norm, so we get
\[
\|\mathsf{m}_{\varphi}:\Gamma_q(\mathsf{H}) \to \Gamma_q(\mathsf{H})\|_{\op{cb}} \le \sup_{m \in \N} \|\mathsf{m}_{\varphi}: \Gamma_q(\R^m) \to\Gamma_q(\R^m)\|_{\op{cb}}.
\]
Since $\Gamma_q(\R^m)$ is a subalgebra of $\Gamma_q(\R^{m+1})$ which is the range of a normal faithful trace-preserving conditional expectation that intertwins the action of $\mathsf{m}_{\varphi}$, the sequence of norms on the right-hand side is non-decreasing, so
\[
\|\mathsf{m}_{\varphi}: \Gamma_q(\mathsf{H}) \to \Gamma_q(\mathsf{H})\|_{\op{cb}} \le\lim_{m\to\infty} \|\mathsf{m}_{\varphi}: \Gamma_q(\R^m) \to \Gamma_q(\R^m)\|_{\op{cb}}.
\]
By the same token, this limit is not greater than $\|\mathsf{m}_{\varphi}: \Gamma_q(\ell_{2,\R}) \to \Gamma_q(\ell_{2,\R})\|_{\op{cb}}$. Since the union of the algebras $\Gamma_q(\R^m)$ is strongly dense in $\Gamma_q(\ell_{2,\R})$, the union of the preduals is norm-dense in the predual of $\Gamma_q(\ell_{2,\R})$. Therefore the limit of norms is equal to the norm of the multplier defined on $L^{1}(\Gamma_q(\ell_{2,\R}))$. By dualising, we get that 
\[
\lim_{m \to \infty} \|\mathsf{m}_{\varphi}: \Gamma_q(\R^m) \to \Gamma_q(\R^m)\|_{\op{cb}} = \|\mathsf{m}_{\varphi}: \Gamma_q(\ell_{2,\R}) \to \Gamma_q(\ell_{2,\R})\|_{\op{cb}}.
\]
\end{proof}

Let us conclude this section with an application to the extension of Theorem \ref{Th:Polynomial bound} to general $q$-Araki-Woods algebras.

\begin{cor}
Let $\Gamma_q(\mathsf{H})$ be a $q$-Araki-Woods algebra. Let $P_n$ be the projection onto Wick words of length $n$, defined by $P_n W(\xi) = \delta_{n,d} W(\xi)$, where $\xi \in \mathsf{H}_{\C}^{\odot d}$. Then $P_n$ extends to a completely bounded, normal map on $\Gamma_{q}(\mathsf{H})$ and $\|P_n\|_{cb} \leqslant C(q)^2 (n+1)^2$.
\end{cor}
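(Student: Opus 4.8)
The plan is to combine the $q$-Gaussian estimate of Theorem \ref{Th:Polynomial bound} with the transference principle of Theorem \ref{Th:Transference}. With the notation of the previous sections, the map $P_n$ is precisely the radial multiplier $\mathsf{m}_{\varphi_n}$ associated to the Kronecker symbol $\varphi_n(k) = \delta_{n,k}$: on a Wick word $W(\xi)$ with $\xi \in \mathsf{H}_{\C}^{\odot d}$ one has $\mathsf{m}_{\varphi_n}(W(\xi)) = \delta_{n,d}\,W(\xi) = P_n W(\xi)$, and the same identity holds verbatim in every $q$-Gaussian algebra $\Gamma_q(\R^m)$.

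First I would invoke Theorem \ref{Th:Polynomial bound}: for each $m \in \N$, the multiplier $\mathsf{m}_{\varphi_n}\from \Gamma_q(\R^m) \to \Gamma_q(\R^m)$ is completely bounded with $\|\mathsf{m}_{\varphi_n}\|_{cb} \leqslant C(q)^2(n+1)^2$. The crucial observation is that this bound depends only on $q$ and $n$, and not on the dimension $m$ of the underlying real Hilbert space, so the family $(\mathsf{m}_{\varphi_n})_{m \in \N}$ has cb norms uniformly bounded in $m$. This is exactly the hypothesis required to apply Theorem \ref{Th:Transference}.

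Applying Theorem \ref{Th:Transference} with $\varphi = \varphi_n$ then yields that $\mathsf{m}_{\varphi_n} = P_n$ on $\Gamma_q(\mathsf{H})$ is completely bounded with
\[
\|P_n\from \Gamma_q(\mathsf{H}) \to \Gamma_q(\mathsf{H})\|_{cb} \le \sup_{m \in \N} \|\mathsf{m}_{\varphi_n}\from \Gamma_q(\R^m) \to \Gamma_q(\R^m)\|_{cb} \le C(q)^2(n+1)^2 .
\]
Normality of the extension follows either from the remark after the definition of radial multipliers (a cb radial multiplier on $\mc A_q(\mathsf{H})$ automatically extends to a normal map on $\Gamma_q(\mathsf{H}) = \mc A_q(\mathsf{H})''$ with the same cb norm) or directly from the conclusion of Theorem \ref{Th:Transference}. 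The degenerate case $n=0$ is immediate, since $P_0$ is the rank-one projection onto $\C\Omega$, which is ucp, and $1 \le C(q)^2(n+1)^2$.

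I do not expect any genuine obstacle here: all of the real work has already been carried out in the ultraproduct embedding (Theorem \ref{Th:Embedding}), the intertwining relation $\Theta\circ P_n = p(P_n \otimes \op{Id})_\omega p \circ \Theta$ (Theorem \ref{Th:Intertwining}), and the resulting transference (Theorem \ref{Th:Transference}). The only point worth emphasising is the dimension-independence of the estimate in Theorem \ref{Th:Polynomial bound}, which is precisely what makes the transference principle applicable to the symbol $\varphi_n$.
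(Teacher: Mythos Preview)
Your proof is correct and follows exactly the paper's approach: identify $P_n$ with the radial multiplier $\mathsf{m}_{\varphi_n}$ for the Kronecker symbol, invoke the dimension-free bound of Theorem \ref{Th:Polynomial bound}, and apply the transference principle of Theorem \ref{Th:Transference}. The paper's proof is a two-line version of precisely this argument.
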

\begin{proof}
We just observe that $P_n = \mathsf m_{\varphi_n},$ where $\varphi_n$ is the Kroenecker delta-function $\varphi_n(k) = \delta_{k,n}$.  By Theorems \ref{Th:Polynomial bound} and \ref{Th:Transference}, we obtain $\|P_n:\Gamma_q(\HH) \to \Gamma_q(\HH)\|_{\op{cb}} \leqslant\|P_n:\Gamma_q(\ell_{2,\R}) \to \Gamma_q(\ell_{2,\R})\|_{\op{cb}}\leqslant C(q)^2 (n+1)^2$.
\end{proof}

The last section will be devoted to the proof of the complete metric approximation property for $\Gamma_q(\HH)$.
\section{Proof of Theorem \ref{Th:CMAP} } \label{proofs}
Before proving our main result, we need to recall one more lemma.
\begin{lem}[{\cite[Proposition 3.17]{MR2822210}}]
Let $\mathsf{H}$ be the Hilbert space constructed from the pair $(\mathsf{H}_{\R}, (U_t)_{t \in \R})$. Let $I$ be the complex conjugation on $\mathsf{H}_{\C}$. Then there exists a net $(T_{i})_{i\in I}$ of finite-rank contractions on $\mathsf{H}$ that satisfy $I T_i I = T_i$, i.e. preserve $\mathsf{H}_{\R}$, and converge strongly to identity.
\end{lem}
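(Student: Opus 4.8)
The plan is to construct the net directly on $\HH$ by exploiting the interaction between the complex conjugation $I$ and the analytic generator $A$. The first observation is the standard relation $IAI=A^{-1}$: since each $U_t$ is the complexification of an orthogonal map it commutes with $I$, so $I A^{it} I = A^{it}$ for all $t$; as antilinearity gives $I A^{it} I = (IAI)^{-it}$, we obtain $(IAI)^{-it}=A^{it}$ for all $t$, forcing $IAI=A^{-1}$. Consequently, for $\lambda>1$ the spectral projection $P_\lambda:=\mathds{1}_{[\lambda^{-1},\lambda]}(A)$ commutes with $I$ (the interval is symmetric under $t\mapsto t^{-1}$) and with $\tfrac{2A}{1+A}$, so it is simultaneously $\langle\cdot,\cdot\rangle$- and $\langle\cdot,\cdot\rangle_U$-orthogonal, hence a self-adjoint contraction of $\HH$ preserving $\HH_\R$; moreover $P_\lambda\to\id$ strongly in $\HH$ as $\lambda\to\infty$, because $A$ is injective. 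Composing any candidate map with $P_\lambda$, it therefore suffices to approximate the identity on $N:=\op{Ran}P_\lambda$ — where $A$ is bounded with bounded inverse — by finite-rank contractions commuting with $I$.

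On $N$ I would use the $A$-spectral splitting $N=\ker(A-1)\oplus L\oplus IL$, where $L:=\mathds{1}_{(1,\lambda]}(A)N$ and $IL:=\mathds{1}_{[\lambda^{-1},1)}(A)N$; here $I$ fixes $\ker(A-1)$ and interchanges $L$ and $IL$ (using $IAI=A^{-1}$), and the three summands are $\langle\cdot,\cdot\rangle_U$-orthogonal since $\tfrac{2A}{1+A}$ is a function of $A$. On $\ker(A-1)$ the two inner products coincide, so ordinary finite-rank real orthogonal projections already give real contractions converging strongly to the identity. The content is thus concentrated on $L\oplus IL$. Writing $J:=I|_L\colon L\to IL$, an antilinear $\langle\cdot,\cdot\rangle$-isometry with $J^{-1}=I|_{IL}$, every $I$-commuting operator respecting the splitting has the form $T=a\oplus JaJ^{-1}$ for a complex-linear $a\colon L\to L$, and $T$ is finite rank exactly when $a$ is.

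The key computation I would carry out is that, because $C=\tfrac{2A}{1+A}$ preserves $L$ and $IL$ and $ICI=\tfrac{2}{1+A}$, such a $T$ is a $\langle\cdot,\cdot\rangle_U$-contraction precisely when $a$ is simultaneously contractive for the two weighted inner products $\langle\cdot,\tfrac{2A}{1+A}\,\cdot\rangle$ and $\langle\cdot,\tfrac{2}{1+A}\,\cdot\rangle$ on $L$. Both weights are functions of $A$, hence equivalent to the original norm on the bounded band $N$. To produce finite-rank such $a$ approximating $\id_L$, I would partition the spectrum of $A|_L\subseteq(1,\lambda]$ into finitely many bands $I_j$ so small that on each the two weights vary by a factor at most $\rho^2$ with $\rho\to 1$ as the mesh shrinks; writing $Q_j=\mathds{1}_{I_j}(A)$ and choosing any finite-rank $\langle\cdot,\cdot\rangle$-orthogonal projections $R_j\le Q_j$, I set $a:=\rho^{-1}\sum_j R_j$. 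Since the $Q_j$ commute with both weights and are mutually orthogonal, $\sum_j R_j$ has weighted operator norm at most $\rho$ in each weight, so $a$ is a genuine contraction for both; refining the partition (so $\rho\to1$) and enlarging the $R_j$ toward $Q_j$ drives $a\to\id_L$ strongly. Reassembling the pieces on $\ker(A-1)$ and $L\oplus IL$ and composing with $P_\lambda$ yields finite-rank contractions of $\HH$ commuting with $I$; indexing by $\lambda$, the partition, and the chosen projections produces a net which, being uniformly contractive and converging strongly to the identity on each summand, converges strongly to $\id_{\HH}$.

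The main obstacle is precisely that $A$ typically has continuous spectrum, so there are no nonzero finite-rank projections commuting with $A$ and one cannot simply diagonalize; the resolution is the band-truncation together with the scalar correction $\rho^{-1}$, which converts maps that are merely approximately contractive on each band into honest global contractions while still permitting strong convergence to the identity. A secondary point requiring care throughout is the preservation of the real structure: this forces the symmetric coupling $a\oplus JaJ^{-1}$ on $L\oplus IL$ and the $t\mapsto t^{-1}$–symmetric cutoff defining $P_\lambda$, both of which are what make $ITI=T$ hold.
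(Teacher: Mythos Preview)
The paper does not supply its own proof of this lemma: it is quoted as \cite[Proposition~3.17]{MR2822210} and used as a black box in the proof of Theorem~\ref{Th:CMAP}. There is therefore no argument in the paper to compare against.

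Your construction is correct and is in the spirit of the Houdayer--Ricard argument. The ingredients --- the relation $IAI=A^{-1}$, the $I$-invariant spectral cutoff $P_\lambda=\mathds{1}_{[\lambda^{-1},\lambda]}(A)$, the splitting $N=\ker(A-1)\oplus L\oplus IL$, and the reduction of the $I$-commutation to the coupled form $a\oplus JaJ^{-1}$ --- are all standard and sound. The only genuinely delicate point, which you handle correctly, is that in the presence of continuous spectrum there are no nonzero finite-rank projections commuting with $A$, so one cannot simply diagonalise. Your band decomposition with the scalar correction $\rho^{-1}$ is exactly the right fix: because the $Q_j$ commute with both weights $\tfrac{2A}{1+A}$ and $\tfrac{2}{1+A}$, the cross terms $\langle R_j\xi,wR_k\xi\rangle$ vanish for $j\neq k$, and within each band the ratio bound $M_j/m_j\le\rho^2$ gives $\langle R_j\xi,wR_j\xi\rangle\le\rho^2\langle Q_j\xi,wQ_j\xi\rangle$, so $\sum_jR_j$ has weighted norm at most $\rho$ and $a=\rho^{-1}\sum_jR_j$ is an honest contraction for both weights. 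One small point worth making explicit: $I$ is in general unbounded on $\HH$, but your maps have range inside $N\subset\HH_\C=\op{dom}(I)$, so the relation $IT_iI=T_i$ is unambiguous.
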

\begin{proof}[Proof of Theorem \ref{Th:CMAP}]
We define a net $\Gamma_{n,t,i} := \Gamma_q(e^{-t} T_i) Q_n$, where $n\in\N$, $t>0$, $i\in I$, the finite-rank maps $T_i$ come from the previous lemma, and $Q_n= P_0+\dots+P_n = \mathsf m_{\chi_{\{0,1, \ldots, n\}}}$ is the radial multiplier which projects onto Wick words of length at most $n$. Each $ \Gamma_{n,t,i}$ is a finite rank map on $\Gamma_q(\mathsf{H})$; indeed, $Q_n$ tells us that we have only Wick words of bounded length and $T_i$ tells us that we can only draw vectors from a finite dimensional Hilbert space, so we are left with a space of the form $\oplus_{d=0}^{n} \left(\C^{m}\right)^{\otimes d}$, which is finite-dimensional. We will pass to a limit with $i \to \infty$, $n\to \infty$ and $t\to 0$. The rate of convergences of $t$ and $n$ will not be independent and will be chosen in a way that assures the convergence $\|\Gamma_{n,t,i}\|_{cb} \to 1$. Let us check now that it is possible, using a standard argument of Haagerup (note that $\Gamma_q(e^{-t})P_k = e^{-kt} P_k$):
\begin{align*}
\|\Gamma_{n,t,i}\|_{cb}&= \|\Gamma_q(e^{-t} T_{i}) Q_n\|_{cb} \\
&\leqslant \|\Gamma_q(e^{-t}) Q_n\|_{cb} \\
&\leqslant \|\Gamma_q(e^{-t})\|_{cb} + \|\Gamma_q(e^{-t})(\mathds{1} - Q_n)\|_{cb} \\
&\leqslant 1 + \sum_{k>n} e^{-kt} \|P_k\|_{cb} \\
&\leqslant 1 + C(q)^2\sum_{k>n} e^{-kt} (k+1)^2.
\end{align*}
Since the series $\sum_{k\ge 0} e^{-kt} (k+1)^2 $is convergent, for any $t>0$ the sum will tend to zero when $n\to\infty$. Therefore we can choose the parameters $i, n \to \infty$ and $t\to 0$ such that the completely bounded norms of the operators $\Gamma_{n,t,i}$ tend to $1$. Then the operators $\frac{\Gamma_{n,t,i}}{\|\Gamma_{n,t,i}\|_{cb}}$ are completely contractive. We have to check that they converge ultraweakly to $\mathds{1}$. Since the denominators converge to $1$ and the net is uniformly bounded, it suffices to prove strong convergence on a linearly dense set. It is very easy to verify that the convergence holds for finite simple tensors, so this ends the proof. 
\end{proof}
Let us state two corollaries of (the proof) of this theorem.
\begin{cor}
Let $\mathsf{H}$ be the Hilbert space constructed from the pair $(\mathsf{H}_{R}, (U_t)_{t\in \R})$. Consider the $\sigma$-weakly dense $C^{\ast}$-algebra $\mc A_q(\mathsf{H}) \subseteq \Gamma_q(\HH)$ generated by the set $\{ W(\xi): \xi \in \mathsf{H}_{\R}\} \subset \op{B}(\mathcal{F}_q(\mathsf{H}))$. This $C^{\ast}$-algebra has the complete metric approximation property.
\end{cor}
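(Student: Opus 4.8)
The plan is to recycle, essentially verbatim, the net of finite-rank maps $\Gamma_{n,t,i} = \Gamma_q(e^{-t}T_i)Q_n$ (with $n \in \N$, $t > 0$, $i \in I$) constructed in the proof of Theorem \ref{Th:CMAP}, and to observe that it already witnesses the complete metric approximation property of the C$^\ast$-algebra $\mc A_q(\HH)$. The first step is to check that each $\Gamma_{n,t,i}$ restricts to a completely bounded map of $\mc A_q(\HH)$ into itself. For $Q_n = P_0 + \dots + P_n$ this is clear: it preserves $\widetilde{\Gamma}_q(\HH)$ by definition of a radial multiplier, and it is completely bounded on $\Gamma_q(\HH)$ (with $\|Q_n\|_{cb} \le \sum_{k=0}^n C(q)^2(k+1)^2$) by the Corollary following Theorem \ref{Th:Transference}, so it extends by norm-continuity to a cb map of $\mc A_q(\HH) = \overline{\widetilde{\Gamma}_q(\HH)}$. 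For $\Gamma_q(e^{-t}T_i)$, note that $e^{-t}T_i$ is a contraction preserving $\HH_\R$ (this is where the hypothesis $IT_iI = T_i$ enters), so the second quantisation is a normal ucp map on $\Gamma_q(\HH)$ which sends a Wick word $W(\xi)$, $\xi \in \HH_\C^{\odot d}$, to $W\big((e^{-t}T_i)^{\odot d}\xi\big)$; in particular it maps $\widetilde{\Gamma}_q(\HH)$ into itself and, being contractive, maps $\mc A_q(\HH)$ into itself. Hence $\Gamma_{n,t,i}$ restricts to $\mc A_q(\HH)$ with cb norm no larger than on $\Gamma_q(\HH)$.

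Second, I would record that, exactly as in the proof of Theorem \ref{Th:CMAP}, $\Gamma_{n,t,i}$ has \emph{finite rank} on $\mc A_q(\HH)$: its image is contained in the span of $\{W(\eta) : \eta \in \big(\ran(T_i)\big)^{\odot d},\ 0 \le d \le n\}$, which is finite-dimensional because $T_i$ has finite rank. Moreover the Haagerup-type estimate from that proof,
\[
\|\Gamma_{n,t,i}\|_{cb} \le \|\Gamma_q(e^{-t})Q_n\|_{cb} \le 1 + C(q)^2 \sum_{k > n} e^{-kt}(k+1)^2,
\]
continues to hold on $\mc A_q(\HH)$, so one may again select parameters $i \to \infty$, $n \to \infty$, $t \to 0$ so that $\|\Gamma_{n,t,i}\|_{cb} \to 1$; after normalising, $\widetilde\Gamma_{n,t,i} := \Gamma_{n,t,i}/\|\Gamma_{n,t,i}\|_{cb}$ is a net of finite-rank complete contractions on $\mc A_q(\HH)$.

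The only point where the argument genuinely departs from the von Neumann algebra setting, and the one mild obstacle, is the mode of convergence: one must show $\Gamma_{n,t,i}(x) \to x$ \emph{in norm} for each $x \in \mc A_q(\HH)$, rather than $\sigma$-weakly. Since the net is uniformly bounded and the normalising constants tend to $1$, an $\varepsilon/3$-argument reduces this to norm convergence on the dense $\ast$-subalgebra $\widetilde{\Gamma}_q(\HH)$, hence by linearity to a single Wick word $W(\xi)$ with $\xi = e_1 \odot \dots \odot e_d$, $e_j \in \HH_\R$. For $n \ge d$ one has $Q_n W(\xi) = W(\xi)$, hence $\Gamma_{n,t,i}(W(\xi)) = e^{-dt}\,W(T_ie_1 \odot \dots \odot T_ie_d)$; here $e^{-dt} \to 1$ as $t \to 0$, while $T_ie_j \to e_j$ in $\HH$ with $\|T_ie_j\| \le \|e_j\|$ as $i \to \infty$. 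It then remains to note that the $d$-linear map $(v_1, \dots, v_d) \mapsto W(v_1 \odot \dots \odot v_d)$ is norm-continuous into $\mc A_q(\HH)$ on bounded sets, which is immediate from the Wick formula (Proposition \ref{prop:Wick}) together with the bound $\|a_q^{\ast}(v)\| = \|a_q(v)\| \le (1-|q|)^{-1/2}\|v\|$ (a telescoping estimate, as in the proof of Lemma \ref{Lem:coefficients}). This yields $\widetilde\Gamma_{n,t,i}(W(\xi)) \to W(\xi)$ in norm, and the proof is complete. In short, the corollary is a reading of the proof of Theorem \ref{Th:CMAP}, the sole new ingredient being the (elementary) upgrade from ultraweak to norm convergence of the approximants.
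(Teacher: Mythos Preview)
Your proposal is correct and follows essentially the same approach as the paper: you reuse the net $\Gamma_{n,t,i}=\Gamma_q(e^{-t}T_i)Q_n$ from the proof of Theorem \ref{Th:CMAP}, keep the same cb-norm estimate, and upgrade the ultraweak convergence to norm convergence on Wick words using the Wick formula and the bound on $\|a_q^{(*)}(\cdot)\|$. The paper's proof is the same in outline, only more terse; it also mentions the Khintchine inequality as an alternative route to the final norm-continuity step, while you choose (as the paper also allows) the creation/annihilation operator estimate.
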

\begin{proof}
Consider once again the maps $\Gamma_{n,t,i}:= \Gamma_q(e^{-t} T_i) Q_n$. The ranges of these maps are contained in $\widetilde{\Gamma}_q(\mathsf{H})$, the bounds for the norms remain the same, so it suffices to check the pointwise convergence in norm. Since the maps are uniformly bounded, it suffices to check the convergence on a linearly dense set, hence we may assume that $x= W(\xi_1\odot\dots\odot \xi_k)$. If $n$ is large enough the $Q_n$ that appears in the definition of $\Gamma_{n,t,i}$ has no effect on $x$, so we get
\[
\Gamma_{n,t,i} x - x = e^{-kt}W(T_{i}\xi_1\odot\dots\odot T_i \xi_k) - W(\xi_1\odot\dots\odot \xi_k).
\]
This last expression is easily seen to converge to zero in norm as $t \to 0$ and $i\to \infty$.  This can be seen either using the  Khintchine inequality (Corollary \ref{cor:NCKhintchine}), or just by expressing $W(\xi_1\odot\dots\odot \xi_k)$ as a non-commutative polynomial in $a_q(\xi_r)$'s and $a_q^*(\xi_r)$'s and invoking the fact that \[\lim_i \|a_q^*(T_i\xi_k) - a_q^*(\xi_k)\| = \lim_i \|a_q(T_i\xi_k) - a_q(\xi_k)\| \le (1-|q|)^{-1/2}\lim_i\|T_i \xi_k - \xi_k\| \to 0.\]
%  Alternately,  we get $k$-dependent bound
%\[
%\|\Gamma_{n,t,i} x - x\| \leqslant C(k) \| e^{-kt}T_{i}\xi_1\otimes\dots\otimes T_i \xi_n - \xi_1\otimes\dots\otimes %\xi_n\|, 
%\]
%where on the right-hand side we simply have the norm in the Hilbert space. Since the net $(T_i)_{i \in I}$ %converges pointwise to identity, if we let $t$ go to $0$ and $i$ go to $\infty$, we get the desired convergence.
\end{proof}
\begin{cor} \label{QWEP}
The $C^{\ast}$-algebra $\mc A_{q}(\mathsf{H})$ is QWEP.
\end{cor}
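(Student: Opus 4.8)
The plan is to reuse the approximating net from the proof of Theorem~\ref{Th:CMAP}, but to arrange that it factors through $C^\ast$-algebras already known to be QWEP. Concretely, write those maps as $\theta_{t,i} := \Gamma_q(e^{-t}T_i)$, now \emph{dropping} the multipliers $Q_n$. Each $\theta_{t,i}$ is a normal u.c.p.\ map (the second quantisation of the real contraction $e^{-t}T_i$), it preserves $\widetilde{\Gamma}_q(\mathsf H)$ and hence, being contractive, preserves the $C^\ast$-algebra $\mc A_q(\mathsf H)$; and the computation already carried out in the previous corollary shows that $\|\theta_{t,i}(x)-x\|\to 0$ for every $x\in\mc A_q(\mathsf H)$ as $t\to 0$, $i\to\infty$.

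Next I would factor each $\theta_{t,i}$ through a finite-dimensional \emph{tracial} $q$-Gaussian algebra. Put $\mathsf L_{t,i} := \overline{T_i\mathsf H_\C}$; this is a finite-dimensional subspace of $\mathsf H_\C$ which is invariant under the complex conjugation $I$ (because $IT_iI=T_i$) and on which $\braket{\cdot}{\cdot}_U$ agrees with $\braket{\cdot}{\cdot}$, so, equipped with the trivial one-parameter group, $\Gamma_q(\mathsf L_{t,i})$ is the tracial $q$-Gaussian algebra $\Gamma_q(T_i\mathsf H_\R)$. Factoring $e^{-t}T_i = \beta_{t,i}\circ\alpha_{t,i}$ with $\alpha_{t,i}\colon \mathsf H\to\mathsf L_{t,i}$ and $\beta_{t,i}\colon\mathsf L_{t,i}\hookrightarrow\mathsf H$ real-structure-preserving contractions, functoriality of second quantisation gives $\theta_{t,i} = \Gamma_q(\beta_{t,i})\circ\Gamma_q(\alpha_{t,i})$, and these maps restrict to u.c.p.\ maps $\mc A_q(\mathsf H)\to\mc A_q(\mathsf L_{t,i})\to\mc A_q(\mathsf H)$ (second quantisations send Wick words to Wick words and are contractive, hence pass to the $C^\ast$-completions). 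Each $\mc A_q(\mathsf L_{t,i})$ is a finite-dimensional tracial $q$-Gaussian $C^\ast$-algebra, and therefore QWEP (it embeds into an ultraproduct of matrix algebras; cf.\ \cite{MR2200739}).

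The conclusion then comes from the permanence properties of QWEP. Taking an ultralimit over the net, the maps $\alpha_{t,i}$ assemble into a faithful u.c.p.\ map $\phi\colon \mc A_q(\mathsf H)\to C:=\prod_{t,i}\mc A_q(\mathsf L_{t,i})$ and the maps $\beta_{t,i}$ into a u.c.p.\ map $\Psi\colon C\to \mc A_q(\mathsf H)^{\ast\ast}$ with $\Psi\circ\phi$ equal to the canonical inclusion $\mc A_q(\mathsf H)\hookrightarrow\mc A_q(\mathsf H)^{\ast\ast}$ (this is where the point-norm convergence $\theta_{t,i}\to\mathrm{id}$ is used). Thus $\mc A_q(\mathsf H)$ is relatively weakly injective in the $C^\ast$-algebra $C$, which is QWEP as an $\ell^\infty$-product of QWEP algebras, and hence $\mc A_q(\mathsf H)$ is QWEP. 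I expect the points needing care to be the base case — that the finite-dimensional tracial $q$-Gaussian $C^\ast$-algebras are QWEP — and the last step, namely invoking the precise permanence statement (relative weak injectivity inside a QWEP $C^\ast$-algebra forces QWEP); everything in between is routine functoriality of second quantisation together with the norm bounds already established.
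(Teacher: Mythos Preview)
Your overall strategy---exhibit $\mc A_q(\HH)$ as relatively weakly injective in something already known to be QWEP---is the right one, and indeed the paper argues the same way. However, your factorisation step contains a genuine error. You claim that on $\mathsf L_{t,i}=\overline{T_i\HH_\C}$ the deformed inner product $\langle\cdot,\cdot\rangle_U$ agrees with the undeformed one $\langle\cdot,\cdot\rangle$. This is false in general: for $\xi,\eta\in\HH_\R$ one has $\operatorname{Re}\langle\xi,\eta\rangle_U=\langle\xi,\eta\rangle_{\HH_\R}$ but $\operatorname{Im}\langle\xi,\eta\rangle_U$ is typically nonzero whenever the group $(U_t)$ is nontrivial. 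Consequently the ``inclusion'' $\beta_{t,i}\colon \mathsf L_{t,i}\hookrightarrow\HH$ (with $\mathsf L_{t,i}$ given the trivial group, i.e.\ the undeformed complexified inner product) is \emph{not} a contraction, and the second-quantisation functor does not apply. The range of $T_i$ has no reason to be $U_t$-invariant, so there is no natural way to equip it with its own one-parameter group either; and even if there were, the resulting $\Gamma_q(\mathsf L_{t,i})$ would not be tracial, so you would be assuming QWEP for a special case of the very $C^\ast$-algebras under discussion.

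The paper avoids this difficulty entirely by taking the ambient QWEP algebra to be $\Gamma_q(\HH)$ itself, whose QWEP was already established by Nou. The net $(\Phi_i)$ constructed in the proof of Theorem~\ref{Th:CMAP} consists of normal maps $\Gamma_q(\HH)\to\mc A_q(\HH)\subset\mc A_q(\HH)^{\ast\ast}$; any point-weak$^\ast$ cluster point is a ucp map $\Gamma_q(\HH)\to\mc A_q(\HH)^{\ast\ast}$ restricting to the identity on $\mc A_q(\HH)$ (by the norm convergence in the preceding corollary). Thus $\mc A_q(\HH)$ is weakly cp complemented in $\Gamma_q(\HH)$, and QWEP descends. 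No factorisation through an intermediate algebra is needed.
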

\begin{proof}
We will show that $\mc A_{q}(\mathsf{H})$ is weakly cp complemented in the von Neumann algebra $\Gamma_q(\mathsf{H})$, meaning that there exists a ucp map $\Phi: \Gamma_q(\mathsf{H}) \to \left(\mc A_q(\mathsf{H})\right)^{\ast\ast}$ such that $\Phi|_{\mc A_q(\mathsf{H})} = \op{Id}$. Let $(\Phi_{i})_{i \in I}$ be the net of maps implementing at the same time the $w^{\ast}$-complete metric approximation property of $\Gamma_q(\mathsf{H})$ and the complete metric approximation property of $\mc A_q(\mathsf{H})$. Using this net, we get maps $\Phi_i: \Gamma_q(\mathsf{H}) \to \left(\mc A_q(\mathsf{H})\right)^{\ast\ast}$, as $\Phi_{i}$ maps $\Gamma_q(\mathsf{H})$ into $\mc A_q(\mathsf{H})$. There exists a cluster point of this net in the point-weak$^{\ast}$-topology and this cluster point is obviously a ucp map that is equal to identity, when restricted to $\mc A_q(\mathsf{H})$, because the net $(\Phi_{i})_{i \in I}$ converges pointwise to identity on $\mc A_q(\mathsf{H})$. Since all $q$-Araki-Woods algebras are QWEP (cf. \cite{MR2200739}) and this property descends to subalgebras that are weakly cp complemented (cf. \cite[Proposition 4.1 (ii)]{MR2072092}), we get the claimed result.
\end{proof}

\bibliographystyle{alpha}
\bibliography{Research}
\end{document}